\providecommand{\U}[1]{\protect\rule{.1in}{.1in}}
\newtheorem{theorem}{Theorem}[section]
\newtheorem{corollary}[theorem]{Corollary}
\newtheorem{lemma}[theorem]{Lemma}
\newtheorem{proposition}[theorem]{Proposition}
\newtheorem{remark}[theorem]{Remark}
\theoremstyle{definition}
\theoremstyle{remark}
\numberwithin{equation}{section}
\begin{document}

\title{Transmutation operators and a new representation for solutions of perturbed Bessel equations}
\author{Vladislav V. Kravchenko, Sergii M. Torba\\{\small Departamento de Matem\'{a}ticas, CINVESTAV del IPN, Unidad
Quer\'{e}taro, }\\{\small Libramiento Norponiente \#2000, Fracc. Real de Juriquilla,
Quer\'{e}taro, Qro., 76230 MEXICO.}\\
{\small e-mail: vkravchenko@math.cinvestav.edu.mx,
storba@math.cinvestav.edu.mx \thanks{Research was supported by CONACYT, Mexico
via the projects 222478 and 284470. Research of VK was supported by the
Regional mathematical center of the Southern Federal University, Russia.}}}
\maketitle

\begin{abstract}
New representations for an integral kernel of the transmutation operator and for
a regular solution of the perturbed Bessel equation of the form $-u^{\prime\prime}+\left(  \frac{\ell(\ell+1)}{x^{2}}+q(x)\right)
u=\omega^{2}u$ are obtained. The integral kernel is represented as a Fourier-Jacobi series. The solution is represented as a Neumann series of
Bessel functions uniformly convergent with respect to $\omega$. For the
coefficients of the series convenient for numerical computation
recurrent integration formulas are obtained. The new representation improves the ones from \cite{KTC2017} and \cite{KShT2018} for large values of $\omega$ and $\ell$ and for non-integer values of $\ell$.

The results are based on application of several ideas from the classical
transmutation (transformation) operator theory, asymptotic formulas for the solution, results connecting the decay rate of the Fourier transform with the smoothness of a function, the Paley-Wiener theorem and some results from constructive approximation theory.

We show that the analytical representation obtained among other possible
applications offers a simple and efficient numerical method able to compute
large sets of eigendata with a nondeteriorating accuracy.

\end{abstract}

\section{Introduction}

In the present work we consider a second order singular differential equation
\begin{equation}
Lu = -u^{\prime\prime}+\left(  \frac{\ell(\ell+1)}{x^{2}}+q(x)\right)  u=\omega
^{2}u,\qquad x\in(0,b], \label{PertBessel}
\end{equation}
where $\ell$ is a real number, $\ell\geq-\frac{1}{2}$, $q$ is a
complex-valued function satisfying the condition
\[
    x^\mu q(x) \in L_1(0,b)\qquad \text{for some } 0\le \mu<1/2
\]
and $\omega$ is a (complex)
spectral parameter. Equations of the form \eqref{PertBessel} appear naturally in many real-world
applications after a separation of variables and therefore have received
considerable attention (see, e.g., \cite{BoumenirChanane}, \cite{CKT2013},
\cite{CKT2015}, \cite{Chebli1994}, \cite{Guillot 1988}, \cite{KosTesh2011},
\cite[Sect. 3.7]{Okamoto}, \cite{SitnikShishkina}, \cite{Sohin1971}, \cite{Weidmann}).

In \cite{KNT 2015} a new representation for solutions of one-dimensional Schr\"{o}dinger equation (the case $\ell=0$ in \eqref{PertBessel}) in the form of Neumann series of Bessel functions (NSFB) \cite{Baricz et al, Wilkins} was obtained. The representation possesses such remarkable properties as exponentially fast convergence for smooth potentials and uniform error bound for partial sums for all $\omega\in\mathbb{R}$. The idea behind this representation is the existence of the transmutation operator connecting a solution of the equation with the solution of the simpler equation having $q\equiv 0$. A transmutation operator can be realized in the form of a Volterra integral operator. Expanding its integral kernel into a Fourier-Legendre series allowed us to obtain the NSBF representation for the solution.

In \cite{KTC2017} we proposed an NSBF representation for the regular solution $\tilde y(\omega, x)$ of \eqref{PertBessel} normalized by its asymptotics $\tilde y(\omega, x)\sim x^{\ell+1}$ at zero. However, the above mentioned advantages of the regular case were lost for the representation from \cite{KTC2017}. The reasons are the following. First, for any fixed $x>0$ the function $\tilde y(\omega, x)$ decays as $\frac{1}{\omega^{\ell+1}}$ as $\omega \to \infty$. For that reason any error bound uniform with respect to $\omega\in\mathbb{R}$ is useful only in some neighborhood of $\omega=0$, and for large values of the parameter $\ell$ this neighborhood becomes rather small. Second, the exponential convergence of the partial sums was lost for $\ell\not\in\mathbb{N}$, the convergence was bounded by $\frac{1}{N^{2\ell+3}}$ (here $N$ is the truncation parameter) independently of the smoothness of the potential. The reason was in the underlying Mehler's integral representation for the solution $\tilde y$:
\begin{equation*}
\begin{split}
\tilde y(\omega,x)&=\frac{2^{\ell+1}\Gamma\left(  \ell+\frac{3}{2}\right)  }{\sqrt{\pi
}\omega^{\ell}}xj_{\ell}\left(  \omega x\right)  +\int_{-x}^{x}\tilde R(x,t)e^{i\omega
t}\,dt\\
&=\frac{2^{\ell+1}\Gamma\left(  \ell+\frac{3}{2}\right)  }{\sqrt{\pi
}\omega^{\ell}}xj_{\ell}\left(  \omega x\right)  +\int_{0}^{x}R(x,t)\cos\omega
t\,dt,
\end{split}
\end{equation*}
here $R = 2\tilde R$ and is an even function, and $j_\ell$ denotes the spherical Bessel function of the first kind, see \cite[Section 10.1]{Abramowitz}.
The integral kernel $\tilde R$ admits the following representation
\begin{equation}\label{R first term}
    \tilde R(x,t) = c(x) \left(1-\frac{t^2}{x^2}\right)^{\ell+1} + \tilde R_1(x,t),
\end{equation}
where $c(x) = 2^{\ell-3/2}\Gamma\left(\ell+\frac 32\right)\int_0^x q(s)\,ds$ and  $\tilde R_1$, extended by 0 onto the whole line, possesses at least $[\ell+2]$ derivatives as a function of $t$. As a result, for non-integer values of $\ell$ the decay rate of the Fourier-Legendre coefficients of $R$ is polynomial and is determined by the first term in \eqref{R first term}, independent on the smoothness of $q$.

To overcome the first problem, one can use the transmutation operator for the perturbed Bessel equation. Recall that a transmutation operator intertwining \eqref{PertBessel} with the unperturbed Bessel equation
\begin{equation}\label{UnpertBessel}
    -y^{\prime\prime}+\frac{\ell(\ell+1)}{x^{2}} y=\omega
^{2}y
\end{equation}
has the form \cite{Volk, Sta2, CoudrayCoz, Trimeche, Holz2020}
\begin{equation}\label{Transmut}
    u(\omega, x) = T[y(\omega, x)] := y(\omega, x) + \int_0^x K(x,t)y(\omega, t)\,dt.
\end{equation}
In such case, if one takes $y(\omega, x) = \omega x j_\ell(\omega x)$ as a regular solution of \eqref{UnpertBessel} (note that this solution remains bounded and does not decay as $\omega\to\infty$, see \cite[(9.2.1)]{Abramowitz}), finds an approximation $K_N$ for $K$ satisfying $\|K(x,\cdot) - K_N(x,\cdot)\|_{L_2(0,x)}\le \varepsilon_N(x)$ and defines an approximate solution $u_N$ by using this $K_N$ in \eqref{Transmut}, one easily obtains (applying the Cauchy-Schwartz inequality) that
\begin{equation*}
    |u(\omega, x)-u_N(\omega, x)|\le c\varepsilon_N(x)
\end{equation*}
uniformly for $\omega\in\mathbb{R}$. And since $u(\omega, x)$ does not decay as $\omega\to\infty$, such approximation is useful for both small and large values of $\omega$.

However, the problem of constructing an approximation $K_N$ for which at the same time the integrals in \eqref{Transmut} can be easily evaluated and rapid decay of the error bound $\varepsilon_N$ as $N\to\infty$ can be proved, is not an easy task, see, e.g., \cite{KrST} for an attempt. The integral kernel $K$ is a solution of a singular Goursat problem which can be transformed into one of the several equivalent integral equations, see \cite{Volk}, \cite{CoudrayCoz}, \cite{Holz2020}. We do not expect that any of these integral equations can provide an efficient approximation method for the integral kernel. The main reason is the following: we conjecture that the integral kernel has the form
\[
K(x,t) = \frac{t^{\ell+1}}{x^\ell}\widetilde K(x, x^2-t^2),
\]
where $\widetilde K$ is a nice function (in the sense that it is at least $C^\infty$ in the second variable for sufficiently smooth potentials). The base for such conjecture is the formula (4.6) from \cite{Chebli1994}
\[
K(x,t) = \frac{t^{\ell+1}}{x^\ell} \sum_{p=1}^\infty \frac{B_p(x)}{2^{p-1}\Gamma(p)}(x^2-t^2)^{p-1},
\]
proved there under the assumption that $q$ possesses a holomorphic extension onto the disk of radius $2xe\sqrt{1+2|\ell|}$. We believe that the factor $\frac{t^{\ell+1}}{x^\ell}$ plays a crucial role in the construction of a good approximation to $K$, however it gets strongly obscured in the known integral equations for $K$.

In \cite{KShT2018} we proposed to use a series representation for the integral kernel $R$ and an Erdelyi-Kober operator to obtain a series representation for the integral kernel $K$, and for integer values of $\ell$ the partial sums provided a good approximation for $K$. For non-integer values of $\ell$, the approximation obtained was not practical to substitute it into \eqref{Transmut}.

In the present paper we show that the Legendre polynomials are not the best choice for representing the kernel $R$, and one needs to use the Jacobi polynomials instead. The motivation for this claim is the formula \eqref{R first term} and the following representation for $R$
\begin{equation*}\label{R series Chebli}
    R(x,t) = (x^2-t^2)^{\ell+1} \sum_{p=1}^\infty \frac{x^p B_p(x)}{\sqrt\pi 2^{\ell+p-1/2} \Gamma(\ell+p)}(x^2-t^2)^{p-1},
\end{equation*}
which follows from formula (4.4) from \cite{Chebli1994} and is proved there under the assumption that $q$ possesses a holomorphic extension onto the disk of radius $2xe\sqrt{1+2|\ell|}$.

We show that the factor $\bigl(1-(t/x)^2\bigr)^{\ell+1}$ should be used as a weight for the Fourier-Jacobi expansion, and $\tilde R$ can be represented as
\begin{equation}\label{R Jacobi}
    \tilde R(x,t) = \left(1-\frac{t^2}{x^2}\right)^{\ell+1}\sum_{n=0}^\infty \frac{\tilde\beta_n(x)}{x} P_{2n}^{(\ell+1,\ell+1)}\left(\frac tx\right),
\end{equation}
where $P_n^{(\alpha,\beta)}$ stands for the Jacobi polynomials, leading to the following representation for the integral kernel $K$
\begin{equation}\label{K Jacobi}
\begin{split}
    K(x,t) &= \frac{2\sqrt\pi }{x^{2\ell+3}\Gamma(\ell+3/2)}\sum_{n=0}^\infty \frac{(-1)^n \tilde\beta_n(x)\Gamma(\ell+2n+2)}{(2n)!}
    t^{\ell+1}P_n^{(\ell+1/2,0)}\left(1-\frac{2t^2}{x^2}\right)\\
    &= \sum_{n=0}^\infty \frac{\beta_n(x)}{x^{\ell+2}} t^{\ell+1}P_n^{(\ell+1/2,0)}\left(1-\frac{2t^2}{x^2}\right),
\end{split}
\end{equation}
where we denoted
\begin{equation}\label{beta and tilde beta}
    \beta_n(x) = (-1)^{n} \frac{2\sqrt{\pi} \Gamma(\ell+2n+2)}{x^{\ell+1}\Gamma(\ell+3/2)(2n)!}\tilde \beta_n(x).
\end{equation}

As a result, the following representation for the regular solution of \eqref{PertBessel} is obtained
\begin{equation}\label{u NSBF}
    u(\omega, x) = \omega x j_\ell(\omega x) + \sum_{n=0}^\infty \beta_n(x) j_{\ell+2n+1}(\omega x).
\end{equation}
We show the uniform with respect to $\omega\in\mathbb{R}$ convergence rate estimates for \eqref{u NSBF}, prove faster than polynomial convergence for $C^\infty$ potentials and present efficient for numerical implementation recurrent formulas to calculate the coefficients $\beta_n$ in which only integration is used, and no differentiation is required.

For the derivative of the regular solution $u(\omega, x)$ we obtain the following representation
\begin{equation}\label{uprime NSBF}
    u'(\omega, x) = \omega^2 x j_{\ell-1}(\omega x) + \left(\frac{xQ(x)}2 - \ell\right)\omega j_\ell(\omega x) +
     \sum_{n=0}^\infty \gamma_n(x) j_{\ell+2n+1}(\omega x),
\end{equation}
possessing all the remarkable properties of \eqref{u NSBF}: uniform with respect to $\omega\in\mathbb{R}$ error bounds for truncated series, faster than polynomial convergence for $C^\infty$ potentials and efficient for numerical implementation formulas to calculate the coefficients $\gamma_n$.

The representation \eqref{K Jacobi} for the integral kernel $K$ together with the decay rate estimates for the coefficients $\beta_n$ can be of the independent interest for studying the transmutation operator for perturbed Bessel equation and for solving the inverse spectral problem like it was done in the regular case in \cite{Kravchenko2019}, \cite{DKK2019}, \cite{KravchenkoInverseBook}.

We would like to mention that despite of lots of technical details in proofs, application of the representations \eqref{u NSBF} and \eqref{uprime NSBF} for numerical solution of boundary value or spectral problems for equation \eqref{PertBessel} is very simple. All that one needs is a solution $u_0$ of the equation
\[
-u_0'' + \left(\frac{\ell(\ell+1)}{x^2} + q(x)\right) u_0=0,
\]
and its derivative $u_0'$ (both can be obtained numerically). Then one calculates the coefficients $\{\beta_n\}_{n=0}^{N_1}$ and $\{\gamma_n\}_{n=0}^{N_2}$ using \eqref{etan}--\eqref{beta_n alt} and \eqref{gamma_n alt}. To estimate optimal values of $N_1$ and $N_2$ one
utilizes the equalities \eqref{Verification beta} and \eqref{Verification gamma}. Computation of both $u(\omega, x)$ and $u'(\omega, x)$ for each fixed $\omega$ reduces to the computation of the values of several Bessel functions. As a result, for example, several hundreds eigenvalues of a Sturm-Liouville problem can be obtained in less than a second.

The paper is organized as follows. In Section \ref{Sect2} we present some preliminary information about transmutation operators and asymptotic estimates of the solutions. In Section \ref{Sect3} we study the smoothness of the integral kernel $\tilde R$ (Propositions \ref{Prop R near pm x} and \ref{Prop Smoothness R}) and convergence rate of its Fourier-Jacobi expansion (Theorem \ref{Thm Convergence R} and Proposition \ref{Prop R pointwise}). In Section \ref{Sect4} we show that the integral kernel $K$ possesses Fourier-Jacobi representation \eqref{K Jacobi} (Theorem \ref{Thm K repr}), extend this representation to a wider class of potentials and prove its convergence rate (Theorem \ref{Thm Convergence K}). In Section \ref{Sect5} we obtain the representation \eqref{u NSBF} (Theorem \ref{thm Rep Sol}). In Section \ref{Sect6} we present necessary facts about the derivative of the regular solution, its relation with the transmutation operator and prove the representation \eqref{uprime NSBF} (Theorems \ref{Thm K1 repr} and \ref{thm Rep DerSol}). In Section \ref{Sect7} we obtain the recurrent formulas for the coefficients $\beta_n$ and $\gamma_n$. In Section \ref{Sect8} we present numerical results for solution of two spectral problems and study the decay of the coefficients $\beta_n$. In Appendix \ref{AppB} we prove that the transmutation integral kernel depends continuously on the potential (Corollary \ref{Corr continuity K}). Finally, in Appendix \ref{AppA} we prove that the regular solution of equation \eqref{PertBessel} is non-vanishing on the whole $(0,b]$ for any sufficiently large negative $\lambda = \omega^2$ (Proposition \ref{Prop NonVanishing Sol}).

\section{Transmutation operator, solution asymptotics and Mehler-type integral representation}
\label{Sect2}

Denote
\begin{equation}\label{Sol bl}
    b_\ell(\omega x) := \omega x j_\ell(\omega x).
\end{equation}
This function is a regular solution of equation \eqref{UnpertBessel} satisfying the asymptotic condition
\begin{equation}\label{bl asympt}
b_\ell(\omega x)\sim \frac{\sqrt\pi}{2^{\ell+1}\Gamma(\ell+3/2)}(\omega x)^{\ell+1},\qquad x\to 0.
\end{equation}
By $u(\omega,x)$ we denote the regular solution of \eqref{PertBessel} satisfying the same asymptotic condition at 0.

Assume that $q\in C[0,b]$. Then \cite{Volk}, \cite{Sta2}, \cite{CoudrayCoz}, \cite{Holz2020} there exists a unique continuous kernel $K(x,t)$ such that for all $\omega \in\mathbb{C}$
\begin{equation}\label{VolkTransmute}
    u(\omega, x) = T[b_\ell(\omega x)] = b_\ell(\omega x) + \int_0^x K(x,t) b_\ell(\omega t)\,dt.
\end{equation}
The integral kernel $K$ satisfies
\begin{equation}\label{K Goursat}
    K(x,x) = \frac{Q(x)}{2},\qquad \text{where}\quad Q(x):= \int_0^x q(t)\,dt.
\end{equation}
The operator $T$ is called the transmutation operator.

The existence of the transmutation operator \eqref{VolkTransmute} can be established for wider class of potentials.
For the purposes of this work we need that the integral kernel $K$ as a function of $t$ belongs to $L_2(0,x)$ for each fixed $x$.
Let us introduce the notation \cite{KosSakhTesh2010}
\begin{equation}\label{tilde q}
\tilde q(x) = \begin{cases}
q(x), & \ell>-1/2,\\
\bigl(1-\log(x/b)\bigr)q(x),& \ell=-1/2.
\end{cases}
\end{equation}
Then the following condition on $q$ is sufficient for the integral kernel $K$ to belong to $L_2(0,x)$:
\begin{equation}\label{Cond on q}
    x^\mu \tilde q(x) \in L_1(0,b)\qquad \text{for some } 0\le \mu<1/2.
\end{equation}
For the proof we refer the reader to \cite[\S2]{Sta2}, the case of non-integer values of $\ell$ is also covered taking \cite{Griffith1955} into account.

The difference between the solutions $u(\omega,x)$ and $b_\ell(\omega x)$ satisfies the inequality \cite[(2.18)]{KosSakhTesh2010}
\begin{equation*}
    |u(\omega,x) - b_\ell(\omega x)|\le C\left(\frac{|\omega|x}{b+|\omega|x}\right)^{\ell+1} e^{|\operatorname{Im}\omega| x} \int_0^x \frac{y|\tilde q(y)|}{b+|\omega|y} \,dy,
\end{equation*}
from which it follows for the potential $q$ satisfying condition \eqref{Cond on q} that
\begin{equation}\label{SolAsymptot1}
    |u(\omega, x) - b_\ell(\omega x)|\le \frac{\tilde C}{|\omega|^{1-\mu}}, \qquad \omega\in\mathbb{R}.
\end{equation}

More can be said about the asymptotic behavior of the solution as $\omega\to\infty$ if the potential $q$ possesses several derivatives on the whole segment $[0,b]$. Indeed, similarly to the proof of Proposition 4.5 from \cite{KTC2017} (see also \cite[Theorem 4]{FitouhiHamza}) one can see that if $q\in W_1^{2p-1}[0,b]$, $p\in\mathbb{N}$, i.e., $q$ possesses $2p-1$ derivatives, the last one belonging to $L_1(0,b)$, then
\begin{equation}\label{SolMoreTerms}
    u(\omega, x) = b_\ell(\omega x) + \sum_{k=1}^p \frac{ A_k(x)}{2^{\ell+1/2}\Gamma(\ell+3/2)} \frac{\omega x j_{\ell+k}(\omega x)}{\omega ^k}+ \mathcal{R}_p(\omega, x),
\end{equation}
where
\begin{equation}\label{SolAsymptot2}
    |\mathcal{R}_p(\omega, x)|\le\frac{c(\ell,m)}{|\omega|^{p+1}}
\end{equation}
and $\mathcal{R}_p$ as a function of $\omega$ is an even entire function of exponential type $x$.

Application of the Paley-Wiener theorem leads to the following Mehler-type integral representation for the solution $u(\omega,x)$, see \cite{KTC2017}
\begin{equation}\label{Mehler representation}
    d_\ell(\omega) u(\omega, x) = d_\ell(\omega) b_\ell(\omega x) + \int_{-x}^x \tilde R(x,t) e^{i\omega t}\,dt,
\end{equation}
where
\[
d_\ell(\omega) = \frac{2^{\ell+1}\Gamma(\ell+3/2)}{\sqrt\pi \omega^{\ell+1}}.
\]
By $W_2^\alpha(\mathbb{R})$, $\alpha\ge 0$ we denote the fractional-order Sobolev space, also called Bessel potential space \cite[Chap. 7]{Adams} consisting of functions satisfying $f\in L_2(\mathbb{R})$ and $(1+|\xi|^2)^{\alpha/2}\mathcal{F}[f](\xi)\in L_2(\mathbb{R})$, where $\mathcal{F}$ is the Fourier transform operator. Then, extending the integral kernel $\tilde R$ as a function of $t$ by 0 outside of $[-x,x]$ and denoting the resulting function again by $\tilde R$ we have the following \cite[Proposition 4.1]{KTC2017}. $\tilde R$ is a continuous, even, compactly supported on $[-x,x]$ function and such that $\tilde R\in W_2^{\ell+3/2-\mu-\varepsilon}(\mathbb{R})$ for any sufficiently small $\varepsilon>0$.


Additionally, if $q\in W_1^{2p-1}[0,b]$, then \cite[(4.24)]{KTC2017}
\begin{equation}\label{Improvement of R}
    \tilde R(x,t) = \tilde R_{p}(x,t)+\sum_{k=1}^p \frac{A_k(x)x^{\ell+k}}{\sqrt\pi 2^{2\ell+k+1}\Gamma(\ell+k+1)\Gamma(\ell+3/2)}\cdot\left(1-\frac{t^2}{x^2}\right)^{\ell+k},\quad -x\le t\le x,
\end{equation}
where $\tilde R_p$ (extended by 0 outside of $[-x,x]$) is a continuous, even function satisfying $\tilde R_p\in W_2^{\ell+p+3/2-\varepsilon}(\mathbb{R})$  for any sufficiently small $\varepsilon>0$.

The integral kernels $\tilde R$ and $K$ are related by the following relations \cite[(2.5) and (2.6)]{KShT2018}
\begin{equation}
\tilde R(x,s) = \frac{\Gamma \left( \ell+\frac{3}{2}\right) }{\sqrt{\pi }\Gamma (\ell+1)}%
\int_{s}^{x}K(x,t)t^{-\ell}(t^{2}-s^{2})^{\ell}\,dt,  \label{RviaK}
\end{equation}%
and
\begin{equation}
K(x,t)=\frac{4\sqrt{\pi }}{\Gamma \left( \ell+\frac{3}{2}\right) }\frac{t^{\ell+1}}{\Gamma (n-\ell-1)} \left( -\frac{d}{2tdt}\right) ^{n}\int_{t}^{x}(s^{2}-t^{2})^{n-\ell-2}s\tilde R(x,s)ds,  \label{KviaR}
\end{equation}
here $n$ is an arbitrary integer satisfying $n>\ell+1$.

\section{Fourier-Jacobi expansion of the integral kernel $\tilde R$}
\label{Sect3}
In this section we show that the integral kernel $\tilde R$ can be expanded into a Fourier-Jacobi series  \eqref{R Jacobi}, study error bounds for the remainder of the truncated series and the decay rate of the coefficients $\beta_n$.

We recall some definitions from the approximation theory used in this section. By $[[x]]$ we denote the largest integer smaller than $x$, and let $\{\{x\}\} := x-[[x]]$. Then $\{\{x\}\}\in (0,1]$.

Following \cite{ST1978} we say that a function $f$ is Lipschitz of an order $\alpha>0$ on $I$ (may be a segment or the whole line) if
\begin{enumerate}
\item there exist the derivatives of $f$ of all orders up to the order $[[\alpha]]$;
\item $f^{(m)}\in L_\infty(I)$ for all $m\le [[\alpha]]$;
\item $f^{([[\alpha]])}$ satisfies the Lipschitz condition of order $\{\{\alpha\}\}$, i.e., there exists a positive constant $A$ such that
\[
|f^{([[\alpha]])}(x) - f^{([[\alpha]])}(y)|\le A|x-y|^{\{\{\alpha\}\}}\qquad \forall x,y\in I.
\]
\end{enumerate}
We will denote the class of such functions by $\operatorname{Lip}(\alpha,I)$.

Following \cite{Moricz2008} we say that a function $f$ belongs to the Zygmund class $\operatorname{Zyg}(\alpha, I)$ on some $I$ for some $\alpha>0$ if
\begin{enumerate}
\item $f^{(m)}\in C(I)$ for all $m\le [[\alpha]]$;
\item $f^{([[\alpha]])}$ satisfies the Zygmund condition for some constant $C$, i.e.,
\[
|f^{([[\alpha]])}(x+h) - 2f^{([[\alpha]])}(x)+f^{([[\alpha]])}(x-h)|\le Ch^{\{\{\alpha\}\}}\qquad \forall x,h:\ \{x-h,x,x+h\}\subset I.
\]
\end{enumerate}
Note that $\operatorname{Lip}(\alpha, I)\subset \operatorname{Zyg}(\alpha,I)$.

The following proposition immediately follows from \cite[Theorems 1 and 3]{Moricz2008} and shows the relationship between the decay rate of the Fourier transform and the smoothness of a function.
\begin{proposition}\label{Prop LipZyg}
Suppose $f:\mathbb{R}\to\mathbb{C}$ is such that $f\in L^1\cap C(\mathbb{R})$ and its Fourier transform satisfies for some $\alpha>0$
\[
\bigl|\hat f(\xi)\bigr| \le \frac C{|\xi|^{\alpha+1}}\qquad\text{for all }\xi\ne 0.
\]
Then
\[
f\in\operatorname{Lip}(\alpha,\mathbb{R}),\qquad\text{if }\alpha\not\in\mathbb{N},
\]
and
\[
f\in\operatorname{Zyg}(\alpha,\mathbb{R}),\qquad\text{if }\alpha\in\mathbb{N}.
\]
Moreover, for any $m\in\mathbb{N}$, $m\le[[\alpha]]$,
\begin{align*}
f^{(m)}&\in\operatorname{Lip}(\alpha-m,\mathbb{R}),\quad\text{if }\alpha\not\in\mathbb{N},\\
f^{(m)}&\in\operatorname{Zyg}(\alpha-m,\mathbb{R}),\quad\text{if }\alpha\in\mathbb{N}.
\end{align*}
\end{proposition}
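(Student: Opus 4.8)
The plan is to deduce the statement from the classical dictionary relating the decay rate of a Fourier transform to the smoothness of the function, made quantitative in \cite[Theorems 1 and 3]{Moricz2008}, after first transferring the hypothesis to each derivative of $f$. The starting point: since $f\in L^1(\mathbb R)$ we have $\hat f\in L^\infty(\mathbb R)$, and combined with $\bigl|\hat f(\xi)\bigr|\le C|\xi|^{-\alpha-1}$ this gives $\hat f\in L^1(\mathbb R)$; as $f\in L^1\cap C(\mathbb R)$ as well, Fourier inversion applies, so $f$ is, up to the normalization constant, the inverse Fourier transform of $\hat f$.

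First I would show that $f\in C^{[[\alpha]]}(\mathbb R)$ with $f^{(m)}\in L^\infty(\mathbb R)$ for every $m\le[[\alpha]]$. The point is that $[[\alpha]]<\alpha$ always, whether or not $\alpha$ is an integer, so for $m\le[[\alpha]]$ the function $\xi\mapsto|\xi|^m\bigl|\hat f(\xi)\bigr|$ is bounded near the origin by $\|\hat f\|_\infty$ and dominated by $C|\xi|^{m-\alpha-1}$ with $m-\alpha-1<-1$ at infinity; hence $\xi\mapsto(i\xi)^m\hat f(\xi)\in L^1(\mathbb R)$, differentiation under the integral sign is justified by dominated convergence, and $f^{(m)}$ is, up to the normalization constant, the inverse Fourier transform of $(i\xi)^m\hat f(\xi)$, a bounded continuous function vanishing at infinity. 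In particular $\mathcal{F}[f^{(m)}](\xi)=(i\xi)^m\hat f(\xi)$ satisfies $\bigl|\mathcal{F}[f^{(m)}](\xi)\bigr|\le C|\xi|^{-(\alpha-m)-1}$.

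It then remains to analyze the top-order derivative $g:=f^{([[\alpha]])}$, whose Fourier transform obeys $|\hat g(\xi)|\le C|\xi|^{-\{\{\alpha\}\}-1}$ and is bounded near $0$. Writing $g(x+h)-g(x)$, respectively $g(x+h)-2g(x)+g(x-h)$, as the inverse Fourier transform of $\hat g(\xi)\bigl(e^{ih\xi}-1\bigr)$, respectively $\hat g(\xi)\bigl(e^{ih\xi}-2+e^{-ih\xi}\bigr)$, using the elementary bounds $\bigl|e^{ih\xi}-1\bigr|\le\min(|h\xi|,2)$ and $\bigl|e^{ih\xi}-2+e^{-ih\xi}\bigr|\le\min(|h\xi|^2,4)$, and splitting the integral over $\mathbb R$ at $|\xi|=1/|h|$, one obtains for $0<|h|\le1$ a bound $A|h|^{\{\{\alpha\}\}}$ when $\alpha\notin\mathbb N$ and $A|h|$ when $\alpha\in\mathbb N$ (where $[[\alpha]]=\alpha-1$ and $\{\{\alpha\}\}=1$), while for $|h|>1$ the corresponding estimate is immediate from $g\in L^\infty$. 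This is exactly what \cite[Theorems 1 and 3]{Moricz2008} provide (their hypotheses $f\in L^1\cap C(\mathbb R)$ are satisfied here), so in the write-up I would simply invoke them. Together with $f^{(m)}\in C(\mathbb R)\cap L^\infty(\mathbb R)$ for all $m\le[[\alpha]]$, this yields $f\in\operatorname{Lip}(\alpha,\mathbb R)$ when $\alpha\notin\mathbb N$ and $f\in\operatorname{Zyg}(\alpha,\mathbb R)$ when $\alpha\in\mathbb N$.

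The final assertion then needs no extra work: for $m\in\mathbb N$ one has $[[\alpha-m]]=[[\alpha]]-m$ and $\{\{\alpha-m\}\}=\{\{\alpha\}\}$, so the derivatives $(f^{(m)})^{(j)}=f^{(m+j)}$, $0\le j\le[[\alpha-m]]$, all lie in $C(\mathbb R)\cap L^\infty(\mathbb R)$, while $(f^{(m)})^{([[\alpha-m]])}=g$ already satisfies the Lipschitz, resp. Zygmund, condition of order $\{\{\alpha\}\}=\{\{\alpha-m\}\}$; equivalently, one may apply the first part of the proposition to $f^{(m)}$ in place of $f$ and to $\alpha-m>0$ in place of $\alpha$, which is legitimate since $\mathcal{F}[f^{(m)}]\in L^1(\mathbb R)$ with the decay rate established above. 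The step I expect to be the main obstacle is the split-integral estimate for $g$: one must cut at scale $|\xi|\sim1/|h|$, treat separately the neighborhood of $\xi=0$ where only boundedness of $\hat g$ is available, and keep track of how the constant $A$ depends on $\{\{\alpha\}\}$, as it degenerates when $\{\{\alpha\}\}\to0^+$, which is precisely why the conclusion passes from a Lipschitz to a Zygmund bound as $\alpha$ approaches an integer.
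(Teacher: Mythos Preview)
Your proposal is correct and takes essentially the same approach as the paper, which simply states that the proposition ``immediately follows from \cite[Theorems 1 and 3]{Moricz2008}'' without further argument. You supply the details the paper omits---verifying that $\hat f\in L^1$ so inversion holds, that $(i\xi)^m\hat f(\xi)\in L^1$ for $m\le[[\alpha]]$ so the derivatives exist and are bounded, and sketching the split-integral estimate underlying M\'oricz's theorems---but the core reduction is identical.
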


Following \cite{Ky1976} we introduce the following notations and definitions. Let $\mathcal{P}_n$ be the set of algebraic polynomials of degree not greater than $n$. Let $W_\alpha(x) = (1-x^2)^{\alpha/2}$, $x\in[-1,1]$, $\alpha> -1/2$. We define as best weighted polynomial approximation of a function $f$ such that $fW_\alpha\in L_2(-1,1)$ the quantity
\begin{equation*}
    E_n(W_\alpha;f) = \inf_{p_n\in \mathcal{P}_n} \| (f-p_n)W_\alpha\|_{L_2(-1,1)},\qquad n=0,1,2,\ldots
\end{equation*}

Denote by $S(\alpha;f,x)$ the orthonormal expansion of $f$ with respect to the system of normalized Jacobi polynomials $\{\tilde P_n^{(\alpha,\alpha)}(x)\}_{n=0}^\infty$, that is
\begin{equation}\label{FourierJacobiSeries}
    f(x)\sim S(\alpha; f,x) = \sum_{k=0}^\infty c_k(\alpha;f) \tilde P_k^{(\alpha,\alpha)}(x),
\end{equation}
where
\begin{equation*}
    c_k(\alpha;f) = \int_{-1}^1 f(x)\tilde P_k^{(\alpha,\alpha)}(x) W_\alpha^2(x)\,dx,\qquad k=0,1,2,\ldots
\end{equation*}
Then the best weighted polynomial approximation of the function $f$ coincides with the norm of the remainder of its Fourier-Jacobi expansion:
\begin{equation}\label{FourierJacobiRemainder}
    E_n(W_\alpha;f) = \left\{\sum_{k=n+1}^\infty |c_k(\alpha;f)|^2\right\}^{1/2}.
\end{equation}

Let $S_k^{(\alpha)}$, $k=1,2,\ldots$ be the set of functions $f$ satisfying
\begin{enumerate}
\item $f$ is a $k$-times iterated integral of $f^{(k)}$ in $(-1,1)$;
\item $f^{(l)}W_{\alpha+l}\in L_2(-1,1)$, $l=0,1,\ldots,k$.
\end{enumerate}
By $S_0^{(\alpha)}$ we define the set of functions $f$ satisfying $W_\alpha f\in L_2(-1,1)$.

Consider the following modulus of continuity:
\begin{align}
  \omega(W_\alpha;f;\delta) & = \sup_{0\le t\le \delta}\left\{\int_0^{5\pi/8} |f^\ast(\theta+t)-f^\ast(\theta)|^2 W_\alpha^{\ast 2}(\theta)\sin\theta\,d\theta\right\}^{1/2}\nonumber
  \displaybreak[2]
  \\
  &\quad +\sup_{0\le t\le \delta}\left\{\int_{3\pi/8}^\pi |f^\ast(\theta-t)-f^\ast(\theta)|^2 W_\alpha^{\ast 2}(\theta)\sin\theta\,d\theta\right\}^{1/2},\quad 0<\delta\le \frac\pi 3, \label{Cont modulus}
  \displaybreak[2]
\end{align}
where $f^\ast(\theta)$ is defined by $f^\ast(\theta):=f(\cos\theta)$, $0\le \theta\le \pi$.

Then the following result holds
\begin{theorem}[\cite{Ky1976}]\label{Thm Approx}
Let $f\in S_k^{(\alpha)}$ for some $k\in\mathbb{N}_0$. Then
\begin{equation*}
    E_{n+k}(W_\alpha;f)\le\frac{c(\alpha,k)}{n^k}\omega\left(W_{\alpha+k}; f^{(k)};\frac 1n\right) \le \frac{c_1(\alpha,k)}{n^k}\|W_{\alpha+k}f^{(k)}\|_{L_2(-1,1)},\qquad n=1,2,\ldots
\end{equation*}
\end{theorem}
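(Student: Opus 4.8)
The plan is to prove the two inequalities separately. For the left one, the idea is to reduce to the classical case $k=0$ by $k$-fold integration by parts. Start from \eqref{FourierJacobiRemainder}, which expresses $E_{n+k}(W_\alpha;f)$ as the $\ell^2$-norm of the tail $\{c_m(\alpha;f)\}_{m>n+k}$ of the Fourier-Jacobi coefficients. For the symmetric (ultraspherical) weight $W_\alpha^2(x)=(1-x^2)^\alpha$, Rodrigues' formula gives the identity
\[
W_\alpha^2(x)\,P_m^{(\alpha,\alpha)}(x)=-\frac1{2m}\,\frac{d}{dx}\Bigl(W_{\alpha+1}^2(x)\,P_{m-1}^{(\alpha+1,\alpha+1)}(x)\Bigr).
\]
Substituting this into $c_m(\alpha;f)=\int_{-1}^1 f\,\tilde P_m^{(\alpha,\alpha)}W_\alpha^2\,dx$ and integrating by parts, the boundary terms vanish: $W_{\alpha+1}^2$ has a zero of order $\alpha+1>\tfrac12$ at $\pm1$, while the requirement $f'W_{\alpha+1}\in L_2(-1,1)$ in the definition of $S_k^{(\alpha)}$ forces $f$ to grow no faster than $(1-x^2)^{-\alpha/2}$ near $\pm1$ (with an extra logarithm when $\alpha=0$), so the product tends to $0$. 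Comparing the $L_2(W_\alpha^2)$- and $L_2(W_{\alpha+1}^2)$-norms of the respective Jacobi polynomials one obtains $c_m(\alpha;f)=\lambda_m\,c_{m-1}(\alpha+1;f')$ with $|\lambda_m|\le C(\alpha)\,m^{-1}$ uniformly in $m\ge 2$.

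Iterating this relation $k$ times — legitimate because $f^{(j)}W_{\alpha+j}\in L_2(-1,1)$, hence $f^{(j)}\in S_0^{(\alpha+j)}$, for every $j\le k$ — gives $|c_m(\alpha;f)|\le C(\alpha,k)\,n^{-k}\,|c_{m-k}(\alpha+k;f^{(k)})|$ for all $m>n+k$, since then $m-k+1>n$. Squaring, summing over $m>n+k$, and invoking \eqref{FourierJacobiRemainder} once more yields
\[
E_{n+k}(W_\alpha;f)\le\frac{C(\alpha,k)}{n^{k}}\,E_n\bigl(W_{\alpha+k};f^{(k)}\bigr).
\]
Thus it suffices to prove the case $k=0$: the Jackson-type bound $E_n(W_\beta;g)\le c(\beta)\,\omega(W_\beta;g;1/n)$ valid for any $g$ with $gW_\beta\in L_2(-1,1)$; applied with $\beta=\alpha+k$ and $g=f^{(k)}$ it completes the middle inequality.

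For the case $k=0$ one passes to $g^\ast(\theta)=g(\cos\theta)$; the normalized polynomials $\tilde P_n^{(\beta,\beta)}(\cos\theta)$ then form, up to the factor $W_\beta^\ast(\theta)\sqrt{\sin\theta}$, an orthonormal trigonometric-type system on $[0,\pi]$, so $E_n(W_\beta;g)$ is the $L_2(0,\pi)$-distance of $g^\ast W_\beta^\ast\sqrt{\sin\theta}$ to an $(n{+}1)$-dimensional span. A near-best approximant is produced by convolving $g^\ast$ with a weighted Jackson-type kernel of degree $n$, and the error is estimated by the triangle inequality together with a shift argument; this produces a bound in terms of precisely the split modulus \eqref{Cont modulus}, where the two integrals over $[0,5\pi/8]$ and $[3\pi/8,\pi]$ are needed to handle the two endpoints $x=\pm1$ (at which the weight degenerates) independently, their overlap covering the interior of $[-1,1]$. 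Finally, the last inequality of the theorem is trivial: applying the triangle inequality inside the two integrals in \eqref{Cont modulus} and undoing the substitution $x=\cos\theta$ gives $\omega(W_{\alpha+k};f^{(k)};\delta)\le 2\|W_{\alpha+k}f^{(k)}\|_{L_2(-1,1)}$ for every $\delta$.

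The main obstacle is the weighted Jackson estimate in the case $k=0$: building a polynomial kernel compatible with the degeneracy of the weight at $x=\pm1$ and verifying that the approximation error is dominated by exactly the modulus \eqref{Cont modulus} — and not by some stronger smoothness functional — is where the substantial analysis lies. By contrast, the integration-by-parts reduction and the final estimate are routine once the dependence of all constants on $\alpha$ and $k$ is tracked carefully.
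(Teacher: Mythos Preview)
The paper does not give its own proof of this theorem: it is quoted verbatim from \cite{Ky1976} and used as a black box. So there is no ``paper's proof'' to compare against; your sketch is being measured against the original source.

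Your reduction from general $k$ to $k=0$ is correct and is exactly the standard mechanism. The Rodrigues-type identity you quote,
\[
(1-x^2)^{\alpha}\,P_m^{(\alpha,\alpha)}(x)=-\frac1{2m}\,\frac{d}{dx}\Bigl((1-x^2)^{\alpha+1}\,P_{m-1}^{(\alpha+1,\alpha+1)}(x)\Bigr),
\]
is precisely \cite[(4.10.1)]{Szego1959} in the ultraspherical case, the boundary-term discussion is sound (the hypothesis $f^{(l)}W_{\alpha+l}\in L_2$ for all $l\le k$ is exactly what makes the iterated integration by parts legitimate), and the norm ratio $\sqrt{h_{m-1}^{(\alpha+1)}/h_m^{(\alpha)}}$ is indeed bounded, giving $|\lambda_m|\le C(\alpha)/m$. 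Iterating yields $E_{n+k}(W_\alpha;f)\le C(\alpha,k)\,n^{-k}\,E_n(W_{\alpha+k};f^{(k)})$, which is the reduction Ky uses.

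Where your write-up stops being a proof is the $k=0$ step, and you say so yourself. ``Convolve with a weighted Jackson-type kernel'' is the right slogan, but the substance of \cite{Ky1976} lies in constructing that kernel so that the resulting error is controlled by the \emph{specific} one-sided modulus \eqref{Cont modulus} rather than by a symmetric second difference; the asymmetry of \eqref{Cont modulus} (forward shift on $[0,5\pi/8]$, backward shift on $[3\pi/8,\pi]$) is not incidental but reflects how the kernel must be adapted near each endpoint separately. Your paragraph does not supply this construction, so as a self-contained proof the proposal has a genuine gap at exactly the point you flag. As a plan, however, it is accurate: the integration-by-parts reduction and the trivial bound $\omega(W_{\alpha+k};f^{(k)};\delta)\le 2\|W_{\alpha+k}f^{(k)}\|_{L_2(-1,1)}$ are routine, and the weighted Jackson inequality for $k=0$ is the theorem of \cite{Ky1976} proper.
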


In the rest of this section we apply Proposition \ref{Prop LipZyg} and Theorem \ref{Thm Approx} to estimate the decay rate of the coefficients and of the remainder of the Fourier-Jacobi expansion of the integral kernel $\tilde R$.

\begin{proposition}\label{Prop R near pm x}
Let $q$ satisfy condition \eqref{Cond on q} and $x>0$ be fixed. Let the integral kernel $\tilde R$ from \eqref{Mehler representation} as a function of $t$ be extended by $0$ outside of $[-x,x]$. For the sake of simplicity we denote this extended function by the same letter $\tilde R$. Then
\begin{enumerate}
\item $\tilde R(x,\cdot) \in \operatorname{Zyg}(\ell-\mu+1,\mathbb{R})$; moreover, if $\ell-\mu\not\in\mathbb{Z}$, then $\tilde R(x,\cdot) \in \operatorname{Lip}(\ell-\mu+1,\mathbb{R})$;
\item there exist functions $\{r_m\}_{m=0}^{[[\ell-\mu+1]]}$, bounded on $[-x,x]$ and continuous on $(-x,x)$, such that
\begin{equation*}
    \partial^m_tR(x,t) = (x^2-t^2)^{\ell-\mu+1-m} r_m(t),\qquad t\in [-x,x].
\end{equation*}
\end{enumerate}
If additionally $q\in W_1^{2p-1}[0,b]$, then the above statements hold for the integral kernel $\tilde R_p$ from \eqref{Improvement of R} with the change of $\ell-\mu$ by $\ell+p$ in all the formulas: $\tilde R_p(x,\cdot) \in \operatorname{Zyg}(\ell+p+1,\mathbb{R})$, etc.
\end{proposition}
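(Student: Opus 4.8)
The plan is to combine the two smoothness facts already recorded in Section~\ref{Sect2}: that $\tilde R(x,\cdot)$, extended by $0$, lies in $W_2^{\ell+3/2-\mu-\varepsilon}(\mathbb{R})$, and that on $[-x,x]$ it has the explicit singular structure coming from \eqref{R first term} and, more generally, \eqref{Improvement of R}. For item~(1) I would first extract from the Sobolev membership a pointwise decay bound on the Fourier transform. Since $\tilde R$ is compactly supported and lies in $W_2^{\alpha}$ with $\alpha=\ell+3/2-\mu-\varepsilon$, its Fourier transform $\mathcal{F}[\tilde R](\xi)$ is, up to a constant depending on $x$, the value $d_\ell(\omega)\bigl(u(\omega,x)-b_\ell(\omega x)\bigr)$ at $\omega=\xi$; by \eqref{SolAsymptot1} this is $O(|\xi|^{-(\ell+1)}\cdot|\xi|^{-(1-\mu)})=O(|\xi|^{-(\ell-\mu+2)})$ for real $\xi$, which is precisely a bound of the form $|\hat f(\xi)|\le C/|\xi|^{(\ell-\mu+1)+1}$. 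Proposition~\ref{Prop LipZyg} with $\alpha=\ell-\mu+1$ then gives $\tilde R(x,\cdot)\in\operatorname{Zyg}(\ell-\mu+1,\mathbb{R})$, and $\operatorname{Lip}(\ell-\mu+1,\mathbb{R})$ when $\ell-\mu+1\notin\mathbb{N}$, i.e.\ when $\ell-\mu\notin\mathbb{Z}$. (One must check that $\ell-\mu+1>0$, which holds since $\ell\ge-1/2$ and $\mu<1/2$; and that $\tilde R\in L^1\cap C(\mathbb{R})$, already known.)

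For item~(2) I would argue by induction on $m$, peeling off the singular term at the endpoints. The base case $m=0$ is \eqref{R first term} rewritten as $R(x,t)=(x^2-t^2)^{\ell-\mu+1}r_0(t)$: the leading term contributes $c(x)(x^2-t^2)^{\ell+1}/x^{2\ell+2}$ and the remainder $\tilde R_1$, which has $[\ell+2]\ge[[\ell-\mu+1]]$ derivatives vanishing at $\pm x$ to the appropriate order, contributes a factor $(x^2-t^2)^{\ell-\mu+1}$ times a bounded, interior-continuous function (using the boundary behaviour forced by the Lipschitz/Zygmund regularity from part~(1)). For the inductive step, differentiating $(x^2-t^2)^{\ell-\mu+1-m}r_m(t)$ produces $(x^2-t^2)^{\ell-\mu-m}\bigl(-2(\ell-\mu+1-m)t\,r_m(t)+(x^2-t^2)r_m'(t)\bigr)$, and the bracket, together with the regularity of $r_m$ inside $(-x,x)$ and the global smoothness from part~(1), defines $r_{m+1}$; boundedness on $[-x,x]$ follows because the total power $\ell-\mu+1-(m+1)$ of $(x^2-t^2)$ stays nonnegative for $m+1\le[[\ell-\mu+1]]$, while the prefactor $r_{m+1}$ absorbs any bounded-but-non-vanishing boundary contribution.

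Finally, for the statement under the extra hypothesis $q\in W_1^{2p-1}[0,b]$, I would repeat the two arguments verbatim with $\tilde R_p$ in place of $\tilde R$: the role of \eqref{SolAsymptot1} is played by the sharper remainder bound \eqref{SolAsymptot2} together with \eqref{SolMoreTerms}, which (after multiplying by $d_\ell(\omega)$) yields $\bigl|\mathcal{F}[\tilde R_p](\xi)\bigr|\le C/|\xi|^{\ell+p+2}$, i.e.\ $\alpha=\ell+p+1$ in Proposition~\ref{Prop LipZyg}; and the role of \eqref{R first term} is played by \eqref{Improvement of R}, whose leading endpoint term is $(1-t^2/x^2)^{\ell+1}$ times a smoother remainder $\tilde R_p$ satisfying $W_2^{\ell+p+3/2-\varepsilon}$, so the factored form becomes $(x^2-t^2)^{\ell+p+1-m}r_m(t)$.

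I expect the main obstacle to be item~(2): carefully justifying that each successively differentiated remainder retains the claimed factored form with a function $r_m$ that is genuinely bounded up to the endpoints and continuous in the interior. The delicate point is the interplay between the two inputs—the endpoint expansions \eqref{R first term}/\eqref{Improvement of R} only control finitely many ``singular'' powers explicitly, while the global Zygmund/Lipschitz regularity from part~(1) controls the behaviour near $\pm x$ only up to order $[[\ell-\mu+1]]$, and one must verify these two mesh exactly so that differentiating never produces a negative power of $(x^2-t^2)$ within the stated range of $m$. Tracking the exact order of vanishing of $\tilde R_1$ (respectively $\tilde R_p$) at the endpoints, which is what makes the factorization close, is the technical heart of the argument.
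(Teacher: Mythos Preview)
Your argument for item~(1) is correct and is exactly the paper's proof: $\tilde R$ is the inverse Fourier transform of $d_\ell(\omega)\bigl(u(\omega,x)-b_\ell(\omega x)\bigr)$, the estimate \eqref{SolAsymptot1} gives the pointwise bound $|\hat{\tilde R}(\xi)|\le C|\xi|^{-(\ell-\mu+2)}$, and Proposition~\ref{Prop LipZyg} finishes.

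For item~(2), however, your inductive scheme has a real gap, and the paper avoids it by a different and much simpler route. Your induction step needs $(x^2-t^2)r_m'(t)$ to be bounded near $\pm x$, i.e.\ $r_m'(t)=O((x^2-t^2)^{-1})$; but the inductive hypothesis only says $r_m$ is bounded, and boundedness of a function gives no control on its derivative at the boundary. Trying to recover this from the decomposition \eqref{R first term} does not help either, because what you would need for $\tilde R_1$ is again a Taylor-type vanishing statement at $\pm x$---the very thing you are trying to prove.

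The paper's observation is that item~(2) follows from item~(1) alone; \eqref{R first term} and \eqref{Improvement of R} are not used here at all. Since the extension of $\tilde R$ by zero lies in $\operatorname{Zyg}(\ell-\mu+1,\mathbb{R})$, all derivatives $\partial_t^k\tilde R(x,\pm x)$ vanish for $0\le k\le [[\ell-\mu+1]]$. For each $m\le[[\ell-\mu+1]]$ apply Taylor's formula to $\partial_t^m\tilde R$ at the endpoint $t=x$ (and symmetrically at $t=-x$):
\[
\partial_t^m\tilde R(x,t)=\frac{(x-t)^{[[\ell-\mu+1]]-m}}{([[\ell-\mu+1]]-m)!}\,\partial_t^{[[\ell-\mu+1]]}\tilde R(x,\xi),\qquad \xi\in(t,x).
\]
Now exploit the Zygmund condition on the top derivative with the three points $\xi$, $x$, $2x-\xi$ (the last two lying in the zero region), which yields $|\partial_t^{[[\ell-\mu+1]]}\tilde R(x,\xi)|\le C|x-\xi|^{\{\{\ell-\mu+1\}\}}\le C|x-t|^{\{\{\ell-\mu+1\}\}}$. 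Combining, $|\partial_t^m\tilde R(x,t)|\le C|x-t|^{\ell-\mu+1-m}$, and dividing by $(x^2-t^2)^{\ell-\mu+1-m}=(x-t)^{\ell-\mu+1-m}(x+t)^{\ell-\mu+1-m}$ leaves a bounded quotient near $t=x$ (and symmetrically near $t=-x$). Continuity of $r_m$ on $(-x,x)$ is immediate since $\partial_t^m\tilde R$ is continuous there and $(x^2-t^2)^{\ell-\mu+1-m}$ is nonvanishing. The case $q\in W_1^{2p-1}[0,b]$ is identical with \eqref{SolAsymptot2} in place of \eqref{SolAsymptot1}.
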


\begin{proof}
Note that by \eqref{Mehler representation} the function $\tilde R$ is the Fourier transform of the function $d_\ell(\omega)[u(\omega,x) - b_\ell(\omega x)]$, which is continuous on $\mathbb{R}$ and decays as $\frac{1}{|\omega|^{\ell-\mu+2}}$ when $\omega\to\infty$ due to \eqref{SolAsymptot1} (see also the proof of Theorem 4.1 from \cite{KTC2017}). Hence the first statement follows immediately from Proposition \ref{Prop LipZyg}.

To prove the second part, consider Taylor's formula for the function $\tilde R$ at the points $t=x$ and $t=-x$. Since the function $\tilde R$ is compactly supported on $[-x,x]$ and continuous together with its derivatives up to the order $[[\ell-\mu+1]]$ on the whole $\mathbb{R}$, we have
\[
\tilde R(x,\pm x) =  \partial_t \tilde R(x,\pm x) =\ldots = \partial_t^{([[\ell-\mu+1]])}\tilde R(x,\pm x) = 0,
\]
hence (for $0\le t\le x$)
\begin{equation}\label{Taylor for R}
\tilde R(x,t) = \frac{(x-t)^{[[\ell-\mu+1]]}}{([[\ell-\mu+1]])!} \partial_t^{[[\ell-\mu+1]]} \tilde R(x,\xi)
\end{equation}
for some $\xi\in (t,x)$.

Now, $\partial_t^{[[\ell-\mu+1]]} \tilde R$ satisfies Zygmund's condition on the whole $\mathbb{R}$. Hence choosing the points $\xi$, $x$ and $2x-\xi$ and taking into account that $\partial_t^{[[\ell-\mu+1]]} \tilde R(x,x)=\partial_t^{[[\ell-\mu+1]]} \tilde R(x,2x-\xi)=0$, we obtain
\begin{equation}\label{Taylor R Zygmund}
    |\partial_t^{[[\ell-\mu+1]]} \tilde R(x,\xi)|\le C|x-\xi|^{\{\{\ell-\mu+1\}\}}\le C|x-t|^{\{\{\ell-\mu+1\}\}}.
\end{equation}
It follows from \eqref{Taylor for R} and \eqref{Taylor R Zygmund} that for $0\le t\le x$
\[
|\tilde R(x,t)|\le C|x-t|^{\ell-\mu+1}.
\]
Now,
\[
\frac{|\tilde R(x,t)|}{(x^2-t^2)^{\ell-\mu+1}}= \frac{|\tilde R(x,t)|}{(x-t)^{\ell-\mu+1}(x+t)^{\ell-\mu+1}}\le \frac{|\tilde R(x,t)|}{x^{\ell-\mu+1}(x-t)^{\ell-\mu+1}}\le \frac{C}{x^{\ell-\mu+1}}.
\]
The proofs for $-x\le t\le 0$ as well as for $m=1,\ldots,[[\ell+1]]$ are similar.

The proof for the case $q\in W_1^{2p-1}[0,b]$ is completely similar if one uses \eqref{SolAsymptot2}.
\end{proof}

Let $x>0$ be fixed. Consider the functions
\begin{equation*}
    g(z):=\tilde R(x,zx) \qquad \text{and}\qquad h(z):=\frac{g(z)}{(1-z^2)^{\ell+1}},\qquad z\in (-1,1).
\end{equation*}
The expansion \eqref{R Jacobi} reduces to the Fourier-Jacobi expansion of the function $h$.
\begin{proposition}\label{Prop Smoothness R}
Let $q$ satisfy condition \eqref{Cond on q}. Then
\[
h\in S^{(\ell+1)}_{[[\ell-\mu+1]]}.
\]
If additionally $q\in W_1^{2p-1}[0,b]$, then $h\in S^{(\ell+1)}_{[[\ell+p+1]]}$.
\end{proposition}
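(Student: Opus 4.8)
The plan is to pass to the interval $(-1,1)$ via the substitution $t=zx$, to invoke the pointwise factorization of the derivatives of $\tilde R$ supplied by Proposition \ref{Prop R near pm x}, and then to differentiate $h=g\cdot(1-z^{2})^{-(\ell+1)}$ by the Leibniz rule while tracking the exponent of $(1-z^{2})$ that survives. Put $N:=[[\ell-\mu+1]]$. Two facts will be extracted from Proposition \ref{Prop R near pm x}: since $\tilde R(x,\cdot)\in\operatorname{Zyg}(\ell-\mu+1,\mathbb{R})$, the function $g(z)=\tilde R(x,zx)$ has continuous derivatives up to order $N$ on $\mathbb{R}$, with $g^{(j)}(z)=x^{j}(\partial_t^{j}\tilde R)(x,zx)$; and part (2) of that proposition, combined with $R=2\tilde R$ and $(x^{2}-z^{2}x^{2})^{a}=x^{2a}(1-z^{2})^{a}$, yields
\[
g^{(j)}(z)=x^{2\ell-2\mu+2-j}(1-z^{2})^{\ell-\mu+1-j}\rho_{j}(z),\qquad 0\le j\le N,
\]
where $\rho_{j}(z):=\tfrac12 r_{j}(zx)$ is bounded on $[-1,1]$ and continuous on $(-1,1)$.

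Condition (1) in the definition of $S_{N}^{(\ell+1)}$ is then immediate: because $(1-z^{2})^{-(\ell+1)}$ is $C^{\infty}$ on $(-1,1)$, the product $h$ lies in $C^{N}(-1,1)$ and hence is an $N$-fold iterated integral of $h^{(N)}$ on $(-1,1)$. For condition (2) I would compute $h^{(l)}$ for $0\le l\le N$. An elementary induction gives $\frac{d^{i}}{dz^{i}}(1-z^{2})^{-(\ell+1)}=P_{i}(z)(1-z^{2})^{-(\ell+1)-i}$ with $\deg P_{i}\le i$; applying the Leibniz rule to $h=g\cdot(1-z^{2})^{-(\ell+1)}$ and inserting the factorization of $g^{(j)}$, every summand carries the same power $(1-z^{2})^{(\ell-\mu+1-j)-(\ell+1)-(l-j)}=(1-z^{2})^{-\mu-l}$, so that
\[
h^{(l)}(z)=(1-z^{2})^{-\mu-l}Q_{l}(z),\qquad Q_{l}(z):=\sum_{j=0}^{l}\binom{l}{j}x^{2\ell-2\mu+2-j}\rho_{j}(z)P_{l-j}(z),
\]
with $Q_{l}$ bounded on $[-1,1]$. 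Consequently $|h^{(l)}(z)|^{2}W_{\ell+1+l}^{2}(z)=|Q_{l}(z)|^{2}(1-z^{2})^{\ell+1-l-2\mu}$, which is integrable over $(-1,1)$ provided $\ell+1-l-2\mu>-1$. Since $l\le N<\ell-\mu+1$ we have $\ell+1-l-2\mu>-\mu>-1$ (here $\mu<1/2$ is used), so $h^{(l)}W_{\ell+1+l}\in L_{2}(-1,1)$ for every $l\le N$. This proves $h\in S_{[[\ell-\mu+1]]}^{(\ell+1)}$.

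For the case $q\in W_{1}^{2p-1}[0,b]$ I would split $\tilde R$ according to \eqref{Improvement of R}, which gives $h(z)=\tilde R_{p}(x,zx)(1-z^{2})^{-(\ell+1)}+\sum_{k=1}^{p}c_{k}(x)(1-z^{2})^{k-1}$ for suitable constants $c_{k}(x)$. The finite sum is a polynomial in $z^{2}$ and hence belongs to every $S_{m}^{(\ell+1)}$; for the first term one repeats the argument above verbatim, now using the $\tilde R_{p}$-version of Proposition \ref{Prop R near pm x} (replacing $\ell-\mu$ by $\ell+p$). The exponents now telescope to $p-l$, giving $h^{(l)}(z)=(1-z^{2})^{p-l}\widetilde Q_{l}(z)$ with $\widetilde Q_{l}$ bounded, and $|h^{(l)}|^{2}W_{\ell+1+l}^{2}=|\widetilde Q_{l}|^{2}(1-z^{2})^{2p+\ell+1-l}$ is integrable over $(-1,1)$ for every $l\le[[\ell+p+1]]$ since $2p+\ell+1-l>p>-1$. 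Hence $h\in S_{[[\ell+p+1]]}^{(\ell+1)}$.

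The argument is essentially bookkeeping of exponents; the only step that is not automatic is verifying that, after multiplication by the weight $W_{\ell+1+l}$, the surviving factor $(1-z^{2})^{-\mu-l}$ (respectively $(1-z^{2})^{p-l}$) still gives a square-integrable function. This is precisely where the constraint $l\le[[\ell-\mu+1]]$ (which forces $l<\ell-\mu+1$) and the hypothesis $\mu<1/2$ are needed, and both enter with comfortable margin.
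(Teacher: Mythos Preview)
Your proof is correct and follows essentially the same route as the paper's: apply the Leibniz rule to $h=g\cdot(1-z^{2})^{-(\ell+1)}$, invoke the factorization of $g^{(j)}$ from Proposition~\ref{Prop R near pm x}, and verify that each resulting term lies in $L_{2}\bigl((-1,1),W_{\ell+1+l}^{2}\,dz\bigr)$. Your presentation is slightly more streamlined in that you observe the exponent of $(1-z^{2})$ is the same (namely $-\mu-l$, respectively $p-l$) for every summand in the Leibniz expansion, whereas the paper tracks each term separately; the content is identical.
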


\begin{proof}
Note that the derivative of order $m$, $m\le [[\ell-\mu+1]]$, of the function $h$ can be written as
\begin{equation}\label{h derivative}
    h^{(m)}(z) = \sum_{k=0}^m \frac{g^{(m-k)}(z)\cdot p_k(z)}{(1-z^2)^{\ell+1+k}},
\end{equation}
where $p_k$ are some polynomials in $z$.

It is sufficient to prove that each term in \eqref{h derivative}, multiplied by $W_{\ell+1+m}$, belongs to $L_2(-1,1)$. We have
\begin{equation}\label{h derivative weight}
    \left|\frac{g^{(m-k)}(z)\cdot p_k(z)}{(1-z^2)^{\ell+1+k}}\cdot (1-z^2)^{(\ell+1+m)/2}\right|^2 = \frac{|g^{(m-k)}(z)|^2\cdot |p_k(z)|^2}{(1-z^2)^{\ell+1+2k - m}}.
\end{equation}
Due to Proposition \ref{Prop R near pm x}
\begin{align*}
g^{(m-k)}(z) & = x^{m-k}(x^2-x^2z^2)^{\ell-\mu+1-m+k}r_{m-k}(zx) \\
&= x^{2\ell-2\mu+2-m+k}(1-z^2)^{\ell-\mu+1-m+k}r_{m-k}(zx),
\end{align*}
and \eqref{h derivative weight} reduces to
\[
\frac{x^{4\ell-4\mu+4-2m+2k}|r_{m-k}(zx)|^2|p_k(z)|^2}{(1-z^2)^{m-\ell+2\mu-1}},
\]
integrable over $(-1,1)$ since $m<\ell-\mu+3/2$ and $\mu<1/2$.

For the proof in the case $q\in W_1^{2p-1}[0,b]$ one applies a completely similar reasoning for the function
\[
h_p(z) := \frac{\tilde R_p(x,xz)}{(1-z^2)^{\ell+1}}, \qquad -1<z<1
\]
and notes that due to \eqref{Improvement of R} the functions $h$ and $h_p$ differ by a polynomial in $z$ and hence belong to the same class $S^{(\ell+1)}_{[[\ell+p+1]]}$.
\end{proof}

As it follows from Proposition \ref{Prop Smoothness R}, the function $h$ always belongs at least to $S_0^{(\ell+1)}$. So we may expand it into a Fourier-Jacobi series \eqref{FourierJacobiSeries}. Returning to the integral kernel $\tilde R$ we have
\begin{equation}\label{R series1}
    \frac{\tilde R(x,t)}{\left(1-\frac{t^2}{x^2}\right)^{\ell+1}} = \sum_{k=0}^\infty c_k(\ell+1; h) \tilde P_k^{(\ell+1,\ell+1)}\left(\frac tx\right),
\end{equation}
where
\begin{align}
  c_k(\ell+1;h) &= \int_{-1}^1 h(z)  \tilde P_k^{(\ell+1,\ell+1)}(z)\cdot (1-z^2)^{\ell+1}\,dz  \nonumber \\
  \displaybreak[2]
   &= \frac 1x\int_{-x}^x \tilde R(x,t)   \tilde P_k^{(\ell+1,\ell+1)}\left(\frac tx\right) \,dt,\quad k=0,1,\ldots \label{R coeff1}
\end{align}
The function $\tilde R$ is even, so all coefficients $c_{2k+1}(\ell+1;h)\equiv 0$ and we obtain the representation \eqref{R Jacobi}. The following theorem provides a convergence estimate.
\begin{theorem}\label{Thm Convergence R}
Let $q$ satisfy condition \eqref{Cond on q}.
There exists a constant $C=C(q, x, \ell)$ such that
\begin{equation}\label{R residual estimate}
    \biggl\|\frac{\tilde R(x,t)}{\bigl(1-\frac{t^2}{x^2}\bigr)^{\frac{\ell+1}{2}}} - \left(1-\frac{t^2}{x^2}\right)^{\frac{\ell+1}{2}}\sum_{k=0}^N c_{2k}(\ell+1; h)\tilde P_{2k}^{(\ell+1,\ell+1)}\left(\frac tx\right)\biggr\|_{L_2(-x,x)} \le
    \frac{C}{(2N-\ell-1)^{\ell-\mu+1}}
\end{equation}
for all $2N>\ell+1$.

If additionally $q\in W_1^{2p-1}[0,b]$, the right hand side of the inequality \eqref{R residual estimate} can be improved to
\begin{equation}\label{R residual estimate2}
    \frac{C_1}{(2N-\ell-p-1)^{\ell+p+1}},\qquad 2N>\ell+p+1.
\end{equation}
\end{theorem}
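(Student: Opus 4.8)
The plan is to recognize the left-hand side of \eqref{R residual estimate} as a best weighted $L_2$-approximation of the function $h$ and then to apply Theorem \ref{Thm Approx}, the only substantial ingredient being a sharp bound for the weighted modulus of continuity \eqref{Cont modulus} of the top-order derivative of $h$. For the reduction, note that $\tilde R(x,t)=\bigl(1-t^2/x^2\bigr)^{\ell+1}h(t/x)$, so the function under the norm in \eqref{R residual estimate} equals $W_{\ell+1}(t/x)\bigl(h(t/x)-\sum_{k=0}^N c_{2k}(\ell+1;h)\tilde P_{2k}^{(\ell+1,\ell+1)}(t/x)\bigr)$; the substitution $t=xz$ then turns the left-hand side of \eqref{R residual estimate} into $\sqrt{x}\,\bigl\|W_{\ell+1}\bigl(h-\sum_{k=0}^N c_{2k}(\ell+1;h)\tilde P_{2k}^{(\ell+1,\ell+1)}\bigr)\bigr\|_{L_2(-1,1)}$. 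Since $\tilde R(x,\cdot)$ is even, $c_{2j+1}(\ell+1;h)=0$, hence the subtracted sum is precisely the partial sum of degree $2N$ of the Fourier--Jacobi expansion \eqref{R series1} of $h$; by Parseval's identity for the orthonormal system and \eqref{FourierJacobiRemainder}, the last norm equals $E_{2N}(W_{\ell+1};h)$. Thus everything reduces to proving $E_{2N}(W_{\ell+1};h)\le C\,(2N-\ell-1)^{-(\ell-\mu+1)}$.

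Next I would invoke Theorem \ref{Thm Approx}. Put $k:=[[\ell-\mu+1]]$; by Proposition \ref{Prop Smoothness R}, $h\in S_k^{(\ell+1)}$. Apply Theorem \ref{Thm Approx} with $\alpha=\ell+1$, with this $k$, and with $n:=2N-k$ (note that $2N>\ell+1\ge k$ and $2N,k\in\mathbb{Z}$, so $n\ge1$): then $n+k=2N$ and
\[
E_{2N}(W_{\ell+1};h)\le\frac{c(\ell+1,k)}{n^{k}}\,\omega\!\left(W_{\ell+1+k};h^{(k)};\tfrac1n\right).
\]
The crucial claim, established in the last paragraph, is that $\omega\!\left(W_{\ell+1+k};h^{(k)};\delta\right)\le C\,\delta^{\{\{\ell-\mu+1\}\}}$ for $0<\delta\le\pi/3$. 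Granting it, and using $k+\{\{\ell-\mu+1\}\}=\ell-\mu+1$, we obtain $E_{2N}(W_{\ell+1};h)\le C'n^{-(\ell-\mu+1)}$; since $k=[[\ell-\mu+1]]\le\ell-\mu+1\le\ell+1$ forces $n=2N-k\ge2N-\ell-1>0$, the estimate \eqref{R residual estimate} follows together with the reduction above. The improved estimate \eqref{R residual estimate2} when $q\in W_1^{2p-1}[0,b]$ is obtained in exactly the same way with $k:=[[\ell+p+1]]$ (again $h\in S_k^{(\ell+1)}$ by Proposition \ref{Prop Smoothness R}), the corresponding bound $\omega(W_{\ell+1+k};h^{(k)};\delta)\le C\delta^{\{\{\ell+p+1\}\}}$, the identity $k+\{\{\ell+p+1\}\}=\ell+p+1$, and $n=2N-[[\ell+p+1]]\ge2N-\ell-p-1$.

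The heart of the proof, and the step I expect to be the main obstacle, is the modulus estimate. I would split each of the two integrals in \eqref{Cont modulus} — they are symmetric, so consider the first — at $\theta=2\delta$. From \eqref{h derivative} and the identities $\partial_t^m\tilde R(x,t)=(x^2-t^2)^{\ell-\mu+1-m}r_m(t)$ of Proposition \ref{Prop R near pm x} one finds, as in the proof of Proposition \ref{Prop Smoothness R}, that $h^{(k)}(z)=(1-z^2)^{-\mu-k}P(z)$ with $P$ bounded on $[-1,1]$; hence on $[0,2\delta]$, for $\delta$ small, $|h^{(k)}(\cos(\theta+t))-h^{(k)}(\cos\theta)|\le C\sin^{-2\mu-2k}\theta$, and since the weight in \eqref{Cont modulus} is $W_{\ell+1+k}^{\ast 2}(\theta)\sin\theta=\sin^{2\ell+2k+3}\theta$, the contribution of $[0,2\delta]$ is $O\bigl(\delta^{2(\ell-k-2\mu+2)}\bigr)=O\bigl(\delta^{2\{\{\ell-\mu+1\}\}}\bigr)$, the last equality holding because $\mu<1$ and $\delta\le1$. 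On $[2\delta,5\pi/8]$ I would expand the increment $h^{(k)}(\cos(\theta+t))-h^{(k)}(\cos\theta)$ termwise according to \eqref{h derivative}: for each $j$ the increment of $g^{(k-j)}(z)p_j(z)(1-z^2)^{-\ell-1-j}$ is the product of $(1-z_1^2)^{-\ell-1-j}p_j(z_1)$ with $g^{(k-j)}(z_1)-g^{(k-j)}(z_2)$ plus a term in which the smooth rational factor is differenced (here $z_1=\cos(\theta+t)$, $z_2=\cos\theta$, $|z_1-z_2|\le Ct\sin\theta$). For $j\ge1$ the function $g^{(k-j)}=x^{k-j}\partial_t^{k-j}\tilde R(x,x\,\cdot\,)$ is Lipschitz on $\mathbb{R}$, its smoothness order $\{\{\ell-\mu+1\}\}+j$ exceeding $1$ (Propositions \ref{Prop R near pm x} and \ref{Prop LipZyg}); for $j=0$ the function $g^{(k)}$ is of Zygmund class of order $\{\{\ell-\mu+1\}\}$ on $\mathbb{R}$ (Lipschitz of that order if $\ell-\mu\notin\mathbb{Z}$) and, moreover, vanishes at $z=\pm1$ like $(1-z^2)^{\{\{\ell-\mu+1\}\}}$. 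Keeping the powers of $1-z^2$ (comparable to $\sin^2\theta$) exact, integrating against $\sin^{2\ell+2k+3}\theta$, and using in the interior the weighted $L_2$-modulus estimate for functions of Zygmund/Lipschitz class, every contribution turns out to be $O\bigl(\delta^{2\{\{\ell-\mu+1\}\}}\bigr)$; the only properties used to make the exponents of $\sin\theta$ lie in the admissible ranges are $\mu<1/2$ and $[[\ell-\mu+1]]<\ell-\mu+1$. The finitely many $N$ with $n\in\{1,2\}$ (for which $\delta$ is close to $\pi/3$) are absorbed into the constant using monotonicity of $\delta\mapsto\omega(\,\cdot\,;\,\cdot\,;\delta)$ and $\omega(W_{\ell+1+k};h^{(k)};\pi/3)\le C\|W_{\ell+1+k}h^{(k)}\|_{L_2(-1,1)}<\infty$. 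In the case $q\in W_1^{2p-1}[0,b]$ the same argument applies with Proposition \ref{Prop R near pm x} in the form valid for $\tilde R_p$ ($\ell-\mu$ replaced by $\ell+p$), plus the trivial estimate $\omega(W_{\ell+1+k};(h-h_p)^{(k)};\delta)\le C\delta$ for the polynomial $h-h_p$.
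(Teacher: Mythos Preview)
Your reduction to $E_{2N}(W_{\ell+1};h)$ and the application of Theorem~\ref{Thm Approx} with $k=[[\ell-\mu+1]]$ are exactly the paper's route; the paper likewise arrives at
\[
E_{2N}(W_{\ell+1};h)\le \frac{c}{(2N-k)^k}\,\omega\bigl(W_{\ell+1+k};h^{(k)};\tfrac{1}{2N-k}\bigr)
\]
and then proves $\omega(W_{\ell+1+k};h^{(k)};\delta)\le C\delta^{\{\{\ell-\mu+1\}\}}$.

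Where you diverge is in the modulus estimate. The paper does not split at $\theta=2\delta$: it bounds the integrand pointwise on all of $[0,5\pi/8]$ by $Ct^{2\{\{\ell-\mu+1\}\}}\sin^{1-2\mu}\theta$ and then integrates. Your splitting is legitimate but adds a layer you don't need; and more importantly, your treatment of the interior piece has a gap. For $j\ge 1$ you invoke only ``$g^{(k-j)}$ is Lipschitz on $\mathbb{R}$''. That bound alone, $|g^{(k-j)}(z_1)-g^{(k-j)}(z_2)|\le C|z_1-z_2|$, is \emph{not} enough once $j\ge 2$: the factor $(1-z_1^2)^{-\ell-1-j}$ then contributes $\sin^{-4\ell-4-4j}\theta$ after squaring, the weight only $\sin^{2\ell+2k+3}\theta$, and the resulting power of $\sin\theta$ is $2k-2\ell-4j+1$, too negative for the $[2\delta,5\pi/8]$--integral to give $O(\delta^{2\{\{\ell-\mu+1\}\}})$ (a short computation shows the needed inequality $k\ge \ell-\mu/2+3/2$ fails). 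What the paper does, and what your phrase ``keeping the powers of $1-z^2$ exact'' should be made to mean, is to use the mean value theorem \emph{together with} the pointwise bound from Proposition~\ref{Prop R near pm x}(2),
\[
|g^{(k-j+1)}(\cos\xi)|\le C(1-\cos^2\xi)^{\ell-\mu-k+j}=C\sin^{2(\ell-\mu-k+j)}\xi,
\]
which supplies precisely the missing positive power of $\sin\theta$ and brings every term down to $Ct^{2\{\{\ell-\mu+1\}\}}\sin^{1-2\mu}\theta$. Once you insert that, the splitting at $2\delta$ becomes superfluous and you recover the paper's uniform pointwise estimate. The $p$-smooth case is then handled identically, as you outline, by passing to $h_p$ and absorbing the polynomial difference.
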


\begin{proof}
Note that
\begin{multline}\label{R residuo and En}
       \biggl\|\frac{\tilde R(x,t)}{\bigl(1-\frac{t^2}{x^2}\bigr)^{\frac{\ell+1}{2}}} - \left(1-\frac{t^2}{x^2}\right)^{\frac{\ell+1}{2}}\sum_{k=0}^N c_{2k}(\ell+1; h)\tilde P_{2k}^{(\ell+1,\ell+1)}\left(\frac tx\right)\biggr\|_{L_2(-x,x)} \\
         = \int_{-x}^x \biggl|\frac{\tilde R(x,t)}{\bigl(1-\frac{t^2}{x^2}\bigr)^{\ell+1}} - \sum_{k=0}^N c_{2k}(\ell+1; h)\tilde P_{2k}^{(\ell+1,\ell+1)}\left(\frac tx\right)\biggr|^2\left(1-\frac{t^2}{x^2}\right)^{\ell+1}\,dt\\
         = x\int_{-1}^1 \biggl| h(z) - \sum_{k=0}^{2N} c_k(\ell+1;h)\tilde P^{(\ell+1,\ell+1)}_k(z)\biggr|^2(1-z^2)^{\ell+1}\,dz = xE_{2N}^2(W_{\ell+1};h).
\end{multline}

Let $m = [[\ell-\mu+1]]$. Due to Proposition \ref{Prop Smoothness R} and Theorem \ref{Thm Approx} we obtain that
\begin{equation}\label{Approx wo omega}
E_{2N}(W_{\ell+1};h)\le \frac{c(\ell+1,m)}{(2N-m)^{m}} \omega\left(W_{\ell+1+m}; h^{(m)};\frac 1{2N-m}\right).
\end{equation}
Let us estimate the modulus of continuity \eqref{Cont modulus}. Due to the Minkowski inequality, it is sufficient to estimate \eqref{Cont modulus} for each term in \eqref{h derivative} separately. We only present the estimates for the first integral. We have for the integrand in \eqref{Cont modulus}
\begin{multline}\label{Modulus integrand}
    \left|\frac{g^{(m-k)}(\cos(\theta+t))p_k(\cos(\theta+t))}{\sin^{2\ell+2+2k}(\theta+t)} - \frac{g^{(m-k)}(\cos \theta)p_k(\cos\theta)}{\sin^{2\ell+2+2k}\theta}\right|^2\sin^{2\ell+3+2m}\theta\\
    \le 2\left|\frac{\bigl(g^{(m-k)}(\cos(\theta+t))-g^{(m-k)}(\cos\theta)\bigr)p_k(\cos(\theta+t))}{\sin^{2\ell+2k+2}(\theta+t)}\right|^2 \sin^{2\ell+3+2m}\theta \\+ 2\left|g^{(m-k)}(\cos\theta)\left(\frac{p_k(\cos(\theta+t))}{\sin^{2\ell+2k+2}(\theta+t)}- \frac{p_k(\cos\theta)}{\sin^{2\ell+2k+2}\theta}\right)\right|^2\sin^{2\ell+2k+3}\theta.
\end{multline}
Consider the first term. If $k=0$, then by the Lipschitz property of the function $g^{(m)}$ (see Proposition \ref{Prop R near pm x}) we have
\begin{multline*}
|g^{(m)}(\cos(\theta+t)) - g^{(m)}(\cos\theta)| \le C|\cos(\theta+t)-\cos\theta|^{\ell-\mu+1-m} \\
= 2C\left|\sin\frac t2\right|^{\ell-\mu+1-m}\cdot \left|\sin\left(\theta+\frac t2\right)\right|^{\ell-\mu+1-m}\le C_1t^{\ell-\mu+1-m}\sin^{\ell-\mu+1-m}\left(\theta+\frac t2\right).
\end{multline*}
Hence the first term can be bounded by
\begin{equation}\label{Estimate for k eq 0}
\frac{C_2 t^{2\ell-2\mu+2-2m} \sin^{2\ell-2\mu+2-2m}(\theta+t/2) \sin^{2\ell+2m+3}\theta}{\sin^{4\ell+4}(\theta+t)}\le C_3 t^{2\ell-2\mu+2-2m} \sin^{1-2\mu}\theta.
\end{equation}
If $k\ge 1$, the function $g^{(m-k)}$ is differentiable, and by the mean value theorem
\[
|g^{(m-k)}(\cos(\theta+t)) - g^{(m-k)}(\cos\theta)| \le (\theta+t-\theta) |g^{(m-k+1)}(\cos\xi)|\cdot |\sin\xi|,
\]
where $\xi\in(\theta,\theta+t)$. For the sake of simplicity we assume that $\theta+t\le \pi/2$, so $\cos\theta>\cos\xi>\cos(\theta+t)$ and $\sin\theta<\sin\xi<\sin(\theta+t)$, the other case is similar. Then due to Proposition \ref{Prop R near pm x}
\begin{align*}
|g^{(m-k+1)}(\cos\xi)|&\le C_1(1-\cos^2\xi)^{\ell-\mu-m+k}\\
&\le (1-\cos^2(\theta+t))^{\ell-\mu-m+k}=\sin^{2\ell-2\mu-2m+2k}(\theta+t)
\end{align*}
(note that $\ell-\mu-m+k\ge 0$ since $m<\ell-\mu+1$). Hence the first term can be bounded by
\begin{multline}\label{Estimate for k gt 0}
\frac{C_2t^2\sin^{4\ell-4\mu-4m+4k}(\theta+t) \sin^2(\theta+t) \sin^{2\ell+2m+3}\theta }{\sin^{4\ell+4k+4}(\theta+t)} =
\frac{C_2t^2\sin^{2\ell+2m+3}\theta }{\sin^{4m+4\mu+2}(\theta+t)} \\
=C_2 t^{2\ell-2\mu+2-2m}\frac{t^{2m+2\mu-2\ell} \sin^{2\ell+2m+3}\theta}{\sin^{4m+4\mu+2}(\theta+t)} \\
\le C_4t^{2\ell-2\mu+2-2m}\frac{\sin^{2m+2\mu-2\ell}t\sin^{2\ell+2m+3}\theta}{\sin^{4m+4\mu+2}(\theta+t)}\le
C_4t^{2\ell+2-2m}\sin^{1-2\mu}\theta,
\end{multline}
where we have used that $m<\ell-\mu+1$ and that $t\le \frac 2\pi \sin t$ for $t\in [0,\pi/2]$.

The proof for the second term in \eqref{Modulus integrand} can be done similarly with the help of the mean value theorem and Proposition \ref{Prop R near pm x}, we left the details to the reader.

As a result, taking into account that $1-2\mu>0$, we obtain that
\begin{equation}\label{Estimate for w}
\omega(W_{\ell+1+m};h^{(m)};\delta)\le C\delta^{\ell-\mu+1-m} = C\delta^{\{\{ \ell-\mu+1\}\}},
\end{equation}
which together with \eqref{Approx wo omega} finishes the proof of \eqref{R residual estimate}.

For the proof of the case when $q\in W_1^{2p-1}[0,b]$, one performs the proof for the function $h_p$ and notes that $h$ and $h_p$ differ by a polynomial of order $p$, hence their best polynomial approximations coincide starting with $n\ge p$.
\end{proof}

As for the pointwise convergence of the series in \eqref{R Jacobi}, we have the following result.
\begin{proposition}\label{Prop R pointwise}
Let $q$ satisfy condition \eqref{Cond on q}.
The series in \eqref{R Jacobi} converges absolutely and uniformly with respect to $t$ on any segment $[-x+\varepsilon,x-\varepsilon]$.
If $q$ is absolutely continuous on $[0,b]$, then the series converges absolutely and uniformly with respect to $t$ on the whole segment $[-x,x]$.

The following weighted estimate holds
\begin{multline}\label{R Jacobi conv pointwise}
    \left|\tilde R(x,t) - \left(1-\frac{t^2}{x^2}\right)^{\ell+1}\sum_{k=0}^n c_{2k}(\ell+1;h) \tilde P_{2k}^{(\ell+1,\ell+1)}\left(\frac tx\right)\right|\le c\left(1-\frac{t^2}{x^2}\right)^{\frac{2\ell+1}4} \frac{\ln n}{n^{\ell+1+p}},\\ t\in [-x,x],\ 2n>\ell+p+1.
\end{multline}
Here the parameter $p$ is from the inclusion $q\in W_1^{2p-1}[0,b]$ (and is taken equal to $-\mu$ if $q\not \in W_1^1[0,b]$).
\end{proposition}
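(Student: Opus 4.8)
\emph{Proof strategy.} Fix $x>0$, put $z=t/x$ and $\alpha:=\ell+1$, and recall from \eqref{R series1}--\eqref{R coeff1} that the partial sum in \eqref{R Jacobi conv pointwise} equals $\left(1-z^{2}\right)^{\alpha}S_{2n}(\alpha;h,z)$, where $S_{m}(\alpha;h,z):=\sum_{k=0}^{m}c_{k}(\alpha;h)\tilde P_{k}^{(\alpha,\alpha)}(z)$ is the Fourier--Jacobi partial sum of $h$ and the odd coefficients vanish because $h$ is even. Introduce the Pollard weight $\phi(z):=\left(1-z^{2}\right)^{(2\alpha+1)/4}=\left(1-z^{2}\right)^{(2\ell+3)/4}$, for which the standard pointwise bound for orthonormal Jacobi polynomials gives $\phi(z)\bigl|\tilde P_{k}^{(\alpha,\alpha)}(z)\bigr|\le C$ uniformly in $k\in\mathbb{N}_{0}$ and $z\in[-1,1]$, equivalently $\left(1-z^{2}\right)^{\alpha}\bigl|\tilde P_{k}^{(\alpha,\alpha)}(z)\bigr|\le C\left(1-z^{2}\right)^{(2\ell+1)/4}$. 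Since $\left(1-z^{2}\right)^{\alpha}/\phi(z)=\left(1-z^{2}\right)^{(2\ell+1)/4}$, the estimate \eqref{R Jacobi conv pointwise} is equivalent to the weighted-uniform bound
\begin{equation*}
\bigl\|\phi\cdot\bigl(h-S_{2n}(\alpha;h,\cdot)\bigr)\bigr\|_{L_{\infty}(-1,1)}\le c\,\frac{\ln n}{n^{\ell+1+p}},\qquad 2n>\ell+p+1,
\end{equation*}
and I would prove it by the classical reduction to best weighted-uniform polynomial approximation.

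Let $\mathcal{E}^{\ast}_{m}(h):=\inf\bigl\{\|\phi(h-P)\|_{L_{\infty}(-1,1)}:\deg P\le m\bigr\}$ and let $P$ realize $\mathcal{E}^{\ast}_{2n}(h)$. Since $S_{2n}$ reproduces polynomials of degree $\le 2n$, $h-S_{2n}h=(h-P)-S_{2n}(h-P)$, whence $\bigl\|\phi(h-S_{2n}h)\bigr\|_{L_{\infty}}\le\bigl(1+\Lambda_{2n}\bigr)\mathcal{E}^{\ast}_{2n}(h)$, where $\Lambda_{m}$ is the Lebesgue constant of the Fourier--Jacobi expansion in the $\phi$-weighted uniform norm. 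Two ingredients remain. \emph{(i) The Lebesgue constant satisfies $\Lambda_{m}=O(\ln m)$.} This is classical for $\alpha=-\tfrac12$ (Chebyshev/Fourier-cosine case) and persists for all $\alpha>-\tfrac12$: multiplying the Christoffel--Darboux kernel by $\phi$ turns the Lebesgue function of the Jacobi expansion into a quantity bounded by $C\ln m$ uniformly on $[-1,1]$, the two endpoint zones — where the orthonormal Jacobi polynomials grow like $k^{\alpha+1/2}$ — being tamed precisely by $\phi$; I would invoke this as a known fact from the theory of weighted polynomial approximation (the companion, at the level of the partial-sum projection, of the results used in Theorem \ref{Thm Approx}). \emph{(ii) The weighted Jackson estimate $\mathcal{E}^{\ast}_{2n}(h)\le Cn^{-(\ell+1+p)}$ holds.} By \eqref{Improvement of R} (read with $\tilde R_{p}=\tilde R$ and $p=-\mu$ when only \eqref{Cond on q} is assumed) $h$ differs from $h_{p}(z):=\tilde R_{p}(x,xz)/\left(1-z^{2}\right)^{\alpha}$ by a polynomial of degree $\le 2p-2$, so for $2n\ge 2p-2$ their best polynomial approximations of degree $\le 2n$ coincide, and it suffices to bound $\mathcal{E}^{\ast}_{2n}(h_{p})$. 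By Propositions \ref{Prop R near pm x} and \ref{Prop Smoothness R}, $h_{p}$ has $[[\ell+p+1]]$ derivatives with $h_{p}^{(m)}(z)=\left(1-z^{2}\right)^{p-m}r_{m}(z)$, $r_{m}$ bounded, and its top derivative has a Lipschitz (Zygmund) modulus of order $\{\{\ell+p+1\}\}$ in the Ditzian--Totik weight $\sqrt{1-z^{2}}$; feeding this smoothness into a weighted Jackson theorem of Ditzian--Totik type for the Jacobi weight gives $\mathcal{E}^{\ast}_{2n}(h_{p})\le Cn^{-(\ell+p+1)}$. Alternatively, $\mathcal{E}^{\ast}_{2n}(h_{p})$ can be estimated directly by repeating, in the uniform norm, the computation performed for the weighted $L_{2}$-modulus in the proof of Theorem \ref{Thm Convergence R}: the estimates \eqref{Estimate for k eq 0}--\eqref{Estimate for k gt 0} carry over with the integrals replaced by suprema, the condition $1-2\mu>0$ (used there only to secure integrability) being no longer needed. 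Combining (i) and (ii), then multiplying through by $\left(1-z^{2}\right)^{\alpha}/\phi(z)=\left(1-z^{2}\right)^{(2\ell+1)/4}$ and setting $z=t/x$, yields \eqref{R Jacobi conv pointwise}.

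The convergence assertions follow along the way. The weight $\left(1-t^{2}/x^{2}\right)^{(2\ell+1)/4}$ being bounded, \eqref{R Jacobi conv pointwise} already gives uniform convergence of the series in \eqref{R Jacobi} on all of $[-x,x]$ when $q$ is absolutely continuous (so $p=1$ and $n^{-(\ell+1+p)}\ln n\to 0$) and on any $[-x+\varepsilon,x-\varepsilon]$ under only \eqref{Cond on q}. For absolute convergence, \eqref{FourierJacobiRemainder} and Theorem \ref{Thm Convergence R} give $|c_{2k}(\alpha;h)|\le E_{2k-1}(W_{\ell+1};h)\le Ck^{-(\ell+1+p)}$, while $\left(1-z^{2}\right)^{\alpha}\bigl|\tilde P_{2k}^{(\alpha,\alpha)}(z)\bigr|\le C\left(1-z^{2}\right)^{(2\ell+1)/4}\le C$; when $q$ is absolutely continuous $\ell+1+p=\ell+2>1$, so $\sum_{k}|c_{2k}(\alpha;h)|<\infty$ and the series converges absolutely and uniformly on $[-x,x]$. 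Under only \eqref{Cond on q} one restricts to $[-x+\varepsilon,x-\varepsilon]$, where the Jacobi polynomials are uniformly bounded (interior asymptotics), and verifies $\sum_{k}|c_{2k}(\alpha;h)|<\infty$ by writing $c_{n}(\alpha;h)$ as the $n$-th Fourier-cosine coefficient of $F(\theta):=h(\cos\theta)(\sin\theta)^{\ell+3/2}$ and noting, via Proposition \ref{Prop R near pm x}, that $F$ vanishes like $\theta^{\ell+3/2-2\mu}$ at the endpoints $\theta=0,\pi$ (the milder exponent, since $\mu<\tfrac12$), so that the decay of its coefficients is governed by the smoother interior behaviour and is summable.

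The step I expect to be the main obstacle is ingredient (i): the $O(\ln m)$ bound for the Lebesgue constant of the Fourier--Jacobi expansion in the $\phi$-weighted uniform norm for \emph{non-integer} $\alpha=\ell+1$. Although this is the exact analogue of the classical Chebyshev estimate and is available in the literature, its proof requires a delicate analysis of the Christoffel--Darboux kernel throughout the endpoint transition zone $1/m^{2}\lesssim 1-|z|\lesssim 1$, where the balance between the growth of the Jacobi polynomials and the decay of $\phi$ is subtle. A secondary, more routine difficulty is the bookkeeping involved in transferring the smoothness of $\tilde R$ supplied by Propositions \ref{Prop R near pm x} and \ref{Prop Smoothness R} — stated in weighted $L_{2}$ language — into the weighted uniform Jackson estimate (ii) and into the summability argument for the coefficients.
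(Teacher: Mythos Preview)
The paper's proof is a single sentence: it invokes \cite[Theorems 7.6 and 7.7]{Suetin} on the strength of the smoothness of $h$ established in Proposition~\ref{Prop R near pm x}. Those are ready-made results on pointwise and weighted-uniform convergence of Fourier--Jacobi series; the characteristic $\ln n$ factor and the weight $(1-z^2)^{(2\alpha+1)/4}$ in \eqref{R Jacobi conv pointwise} come straight from Suetin's formulation. Your ingredients (i) and (ii) --- the $O(\ln n)$ bound on the $\phi$-weighted Lebesgue constant and the weighted Jackson estimate --- are precisely the machinery behind such theorems, so you have correctly reverse-engineered the argument the paper outsources. The obstacle you single out (the $O(\ln n)$ bound on $\Lambda_m$ for general $\alpha$) is exactly the nontrivial content hidden inside the citation. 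Your route buys self-containment; the paper's buys a one-line proof.

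One caution: your argument for \emph{absolute} convergence on $[-x+\varepsilon,x-\varepsilon]$ under only \eqref{Cond on q} is the weakest link. The identification of $c_{2k}(\alpha;h)$ with a Fourier-cosine coefficient of $F(\theta)=h(\cos\theta)(\sin\theta)^{\ell+3/2}$ is only asymptotic (the Darboux formula for $\tilde P_n^{(\alpha,\alpha)}$ has a nontrivial remainder near $\theta=0,\pi$), and neither the endpoint vanishing $F(\theta)\sim\theta^{\ell+3/2-2\mu}$ nor the interior smoothness $\operatorname{Zyg}(\ell-\mu+1)$ from Proposition~\ref{Prop R near pm x} forces $\sum_k|c_{2k}|<\infty$ when $\ell$ is close to $-\tfrac12$: the direct bound $|c_{2k}|\le E_{2k-1}(W_{\ell+1};h)\le Ck^{-(\ell-\mu+1)}$ is not summable in that range, and your appeal to ``the smoother interior behaviour'' does not improve it, since the interior smoothness is also only $\operatorname{Zyg}(\ell-\mu+1)$. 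The \emph{uniform} convergence on interior compacts already follows cleanly from your weighted estimate (rate $n^{-(\ell+1-\mu)}\ln n\to 0$ since $\ell+1-\mu>0$ for all admissible $\ell,\mu$), so only the word ``absolute'' needs extra care. The paper simply defers that to Suetin; if you insist on a self-contained argument there, you will need a sharper coefficient estimate than Theorem~\ref{Thm Convergence R} supplies in the low-$\ell$ regime.
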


\begin{proof}
The proof immediately follows from Proposition \ref{Prop R near pm x} and \cite[Theorems 7.6 and 7.7]{Suetin}.
\end{proof}

Comparing \eqref{R series1} with \eqref{R Jacobi} we see that
\begin{equation}\label{beta via cFourier}
    \tilde \beta_n(x) = \frac{x c_{2n}(\ell+1;h)}{\sqrt{h_{2n}}},
\end{equation}
where
\[
h_n = \frac{2^{2\ell+3}}{2n+2\ell+3} \frac{\Gamma^2(n+\ell+2)}{n!\Gamma(n+2\ell+3)}
\]
is the square of the norm of the Jacobi polynomial $P^{(\ell+1,\ell+1)}_n$, see \cite[22.2.1]{Abramowitz}.

\begin{corollary}\label{Cor DecayBetas}
Let $q$ satisfy condition \eqref{Cond on q}.
There exists a constant $C=C(q,x,\ell)$ such that
\begin{equation}\label{Estimate for betas}
    \sum_{n=N+1}^\infty \frac{|\tilde\beta_n(x)|^2}{n}\le \frac{C}{N^{2\ell-2\mu+2}},\qquad 2N>\ell+1.
\end{equation}
If additionally $q\in W_1^{2p-1}[0,b]$ then
\begin{equation}\label{Estimate for betas2}
    \sum_{n=N+1}^\infty \frac{|\tilde\beta_n(x)|^2}{n}\le \frac{C}{N^{2\ell+2p+2}},\qquad 2N>\ell+p+1.
\end{equation}
\end{corollary}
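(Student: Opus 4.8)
The plan is to reduce the weighted tail sum $\sum_{n=N+1}^\infty|\tilde\beta_n(x)|^2/n$ to the squared $L_2$-norm of the remainder of the Fourier--Jacobi expansion of $h$, a quantity already controlled by Theorem \ref{Thm Convergence R}. Indeed, by \eqref{beta via cFourier},
\[
\frac{|\tilde\beta_n(x)|^2}{n}=\frac{x^2\,|c_{2n}(\ell+1;h)|^2}{n\,h_{2n}},
\]
so the estimate will follow once it is shown that $1/(n\,h_{2n})$ is bounded uniformly in $n$, and the remaining factor $\sum_{n>N}|c_{2n}(\ell+1;h)|^2$ is identified with $E_{2N}^2(W_{\ell+1};h)$.

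The first --- and I expect the only genuinely technical --- step is the asymptotic analysis of the normalizing constant $h_{2n}$. Writing $\frac{\Gamma^2(m+\ell+2)}{m!\,\Gamma(m+2\ell+3)}=\frac{\Gamma(m+\ell+2)}{\Gamma(m+1)}\cdot\frac{\Gamma(m+\ell+2)}{\Gamma(m+2\ell+3)}$ and applying the standard ratio asymptotics $\Gamma(m+a)/\Gamma(m+b)=m^{a-b}\bigl(1+o(1)\bigr)$ with $m=2n$, this product tends to $1$, while $\frac{1}{4n+2\ell+3}\sim\frac1{4n}$; hence $n\,h_{2n}\to 2^{2\ell+1}$ as $n\to\infty$. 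Since $h_{2n}>0$ for every $n$, the sequence $n\,h_{2n}$ is bounded above and below by positive constants depending only on $\ell$, so there is $C=C(\ell)$ with $1/(n\,h_{2n})\le C$ for all $n\ge1$.

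Combining this with the displayed identity gives $\sum_{n=N+1}^\infty|\tilde\beta_n(x)|^2/n\le Cx^2\sum_{n=N+1}^\infty|c_{2n}(\ell+1;h)|^2$. Because $\tilde R$ is even, $h$ is even and its odd Fourier--Jacobi coefficients vanish, so by \eqref{FourierJacobiRemainder} the last sum equals $E_{2N}^2(W_{\ell+1};h)$, which by \eqref{R residuo and En} is exactly $x^{-1}$ times the square of the left-hand side of \eqref{R residual estimate}. Invoking Theorem \ref{Thm Convergence R} now yields the bound $C(q,x,\ell)/(2N-\ell-1)^{2(\ell-\mu+1)}$, and in the case $q\in W_1^{2p-1}[0,b]$ the sharper estimate \eqref{R residual estimate2} gives $C_1/(2N-\ell-p-1)^{2(\ell+p+1)}$; the exponents $2\ell-2\mu+2$ and $2\ell+2p+2$ are precisely those in \eqref{Estimate for betas} and \eqref{Estimate for betas2}. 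It remains to replace $2N-\ell-1$ (resp.\ $2N-\ell-p-1$) by $N$ in the denominator: since $2N-\ell-1\ge cN$ for all integers $N$ with $2N>\ell+1$ apart from the finitely many $N<\ell+1$, and for those exceptional indices the tail is in any case finite (the full series $\sum_n|\tilde\beta_n|^2/n$ converges, as $h\in S_0^{(\ell+1)}$ forces $W_{\ell+1}h\in L_2(-1,1)$ and hence $\sum_n|c_n(\ell+1;h)|^2<\infty$), enlarging the constant disposes of them.
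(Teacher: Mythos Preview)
Your proof is correct and follows essentially the same route as the paper: relate $|\tilde\beta_n|^2/n$ to $|c_{2n}(\ell+1;h)|^2$ via \eqref{beta via cFourier}, use that $h_{2n}\ge c/n$ to bound $1/(n\,h_{2n})$, and then invoke \eqref{FourierJacobiRemainder}, \eqref{R residuo and En} and Theorem~\ref{Thm Convergence R}. The only cosmetic difference is that the paper cites the lower bound $h_n\ge c/n$ from Suetin, whereas you derive the asymptotic $n\,h_{2n}\to 2^{2\ell+1}$ directly from the Gamma-function ratios; your treatment of the passage from $(2N-\ell-1)^{-(2\ell-2\mu+2)}$ to $N^{-(2\ell-2\mu+2)}$ is in fact slightly more careful than the paper's one-line absorption into the constant.
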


\begin{proof}
By \eqref{R residuo and En}, \eqref{FourierJacobiRemainder} and Theorem \ref{Thm Convergence R}
\begin{equation}\label{Estimate for cn}
    \sum_{k=N+1}^\infty |c_{2k}(\ell+1;h)|^2\le \frac{C}{(2N-\ell-1)^{2\ell-2\mu+2}}\le \frac{C_1}{N^{2\ell-2\mu+2}}
\end{equation}
for all $N$ such that $2N>\ell+1$. Since $h_n\ge \frac cn$, see \cite[(IV.7.8)]{Suetin}, \eqref{Estimate for betas} immediately follows from \eqref{beta via cFourier} and \eqref{Estimate for cn}.

Proof of the second statement is similar.
\end{proof}

Leaving only the first term in the sums \eqref{Estimate for betas} and \eqref{Estimate for betas2} we obtain the following result.
\begin{corollary}
Let $q$ satisfy condition \eqref{Cond on q}.
There exists a constant $C_1=C_1(q,x,\ell)$ such that
\begin{equation}\label{Estimate for one beta}
    |\tilde\beta_n(x)|\le \frac{C_1}{n^{\ell-\mu+1/2}},\qquad 2n>\ell+1.
\end{equation}
If additionally $q\in W_1^{2p-1}[0,b]$ then
\begin{equation}\label{Estimate for one beta p}
    |\tilde\beta_n(x)|\le \frac{C_1}{n^{\ell+p+1/2}},\qquad 2n>\ell+p+3/2.
\end{equation}
\end{corollary}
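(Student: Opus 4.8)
The plan is to read off both estimates directly from Corollary~\ref{Cor DecayBetas} by retaining only the leading term of the respective infinite sums. Concretely, since every summand in \eqref{Estimate for betas} is nonnegative,
\[
\frac{|\tilde\beta_{N+1}(x)|^2}{N+1}\ \le\ \sum_{n=N+1}^\infty \frac{|\tilde\beta_n(x)|^2}{n}\ \le\ \frac{C}{N^{2\ell-2\mu+2}},\qquad 2N>\ell+1.
\]
Multiplying through by $N+1$ and using $N+1\le 2N$ (valid because $2N>\ell+1\ge 1/2$ forces $N\ge 1$) yields $|\tilde\beta_{N+1}(x)|^2\le 2C\,N^{-(2\ell-2\mu+1)}$, and taking square roots gives $|\tilde\beta_{N+1}(x)|\le C_1\,N^{-(\ell-\mu+1/2)}$, where I use the exponent identity $\ell-\mu+1/2=\tfrac12(2\ell-2\mu+1)$.

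It then remains to re-index. Setting $n=N+1$ and observing that $n-1\ge n/2$ for $n\ge 2$, so that $(n-1)^{-(\ell-\mu+1/2)}$ is comparable to $n^{-(\ell-\mu+1/2)}$ up to a constant depending only on $\ell$ and $\mu$ --- a step that is harmless regardless of the sign of the exponent, which may be negative when $\ell$ is close to $-1/2$ --- one obtains \eqref{Estimate for one beta} for every $n$ with $2(n-1)>\ell+1$. The finitely many remaining integers $n$ with $\ell+1<2n\le \ell+3$ are covered simply by enlarging the constant $C_1$: for each such fixed $n$ the quantity $|\tilde\beta_n(x)|$ is a single Fourier--Jacobi coefficient of the $L_2$ function $h$ and hence finite. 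This completes \eqref{Estimate for one beta}.

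The proof of \eqref{Estimate for one beta p} is word-for-word the same, starting from \eqref{Estimate for betas2} in place of \eqref{Estimate for betas}: one gets $|\tilde\beta_{N+1}(x)|\le C_1\,N^{-(\ell+p+1/2)}$ for $2N>\ell+p+1$, re-indexes, and absorbs the at most one intermediate value of $n$ into the constant. I do not expect any genuine obstacle here; the only point deserving a word of care is the mismatch between the index range $2N>\ell+1$ (resp.\ $2N>\ell+p+1$) coming from Corollary~\ref{Cor DecayBetas} and the stated range $2n>\ell+1$ (resp.\ $2n>\ell+p+3/2$), which is purely cosmetic and is handled by the finite-absorption remark above.
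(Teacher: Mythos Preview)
Your proposal is correct and follows exactly the paper's approach: the authors' proof consists of the single sentence ``Leaving only the first term in the sums \eqref{Estimate for betas} and \eqref{Estimate for betas2} we obtain the following result,'' and you have simply filled in the routine re-indexing and constant-absorption details that this sentence leaves implicit.
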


\section{Fourier-Jacobi expansion of the integral kernel $K$}
\label{Sect4}

In this section we apply formula \eqref{KviaR} to the representation \eqref{R Jacobi} and show all the necessary intermediate results like the possibility of termwise differentiation, convergence rate estimates etc.

\begin{theorem}\label{Thm K repr}
Suppose that $q\in AC[0,b]$. Then the representation \eqref{K Jacobi} is valid for the integral kernel $K$.
The series converges absolutely and uniformly with respect to $t\in [0, x-\varepsilon]$ for any small $\varepsilon>0$.
If additionally $q\in W_1^3[0,b]$, then the series converges absolutely and uniformly with respect to $t$ on $[0,x]$.
\end{theorem}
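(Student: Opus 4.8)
The plan is to substitute the Fourier--Jacobi expansion \eqref{R Jacobi} of $\tilde R$ into the inversion formula \eqref{KviaR} and to work term by term. Since $q\in AC[0,b]=W_1^1[0,b]$, condition \eqref{Cond on q} holds with $\mu=0$ and the refined estimates for $p=1$ are available; in particular, by Proposition \ref{Prop R pointwise} the series \eqref{R Jacobi} converges absolutely and uniformly on $[-x,x]$, which---together with the local integrability of $(s^2-t^2)^{n-\ell-2}$ on $[t,x]$---lets me interchange the summation with the integral in \eqref{KviaR} for any fixed integer $n>\ell+1$. Writing $F_k(t):=\int_t^x(s^2-t^2)^{n-\ell-2}s\bigl(1-\tfrac{s^2}{x^2}\bigr)^{\ell+1}P_{2k}^{(\ell+1,\ell+1)}(\tfrac sx)\,ds$, I then have to (i) evaluate $\bigl(-\tfrac{d}{2t\,dt}\bigr)^nF_k(t)$ in closed form, and (ii) justify applying $\bigl(-\tfrac{d}{2t\,dt}\bigr)^n$ to $\sum_k\tfrac{\tilde\beta_k(x)}{x}F_k(t)$ term by term.

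For (i) I would change variables to $\xi=t^2$, $\sigma=s^2$, so that $\bigl(-\tfrac{d}{2t\,dt}\bigr)=-\tfrac{d}{d\xi}$ and $s\,ds=\tfrac12\,d\sigma$; then $\bigl(-\tfrac{d}{2t\,dt}\bigr)^nF_k$ becomes $(-1)^n(d/d\xi)^n$ of a right-sided Riemann--Liouville fractional integral of order $n-\ell-1$ based at $\sigma=x^2$, i.e. a right-sided Riemann--Liouville fractional derivative of order $\ell+1$ (independently of the choice of $n>\ell+1$). Using the quadratic transformation
\[
P_{2k}^{(\ell+1,\ell+1)}\!\left(\frac sx\right)=\frac{(\ell+2)_{2k}\,k!}{(2k)!\,(\ell+2)_k}\,P_k^{(\ell+1,-1/2)}\!\left(\frac{2s^2}{x^2}-1\right),
\]
the integrand becomes a degree-$k$ polynomial in $\sigma$ against which $\bigl(1-\sigma/x^2\bigr)^{\ell+1}$ is exactly the power pairing with the first Jacobi parameter $\ell+1$; the classical Jacobi fractional-calculus identity (the inverse of Askey's formula $\frac1{\Gamma(\nu)}\int_\zeta^1(s-\zeta)^{\nu-1}(1-s)^\alpha P_k^{(\alpha,\beta)}(s)\,ds=\frac{\Gamma(k+\alpha+1)}{\Gamma(k+\alpha+\nu+1)}(1-\zeta)^{\alpha+\nu}P_k^{(\alpha+\nu,\beta-\nu)}(\zeta)$) with $\alpha=\ell+1$, $\beta=-\tfrac12$ and order $\ell+1$ then lowers the first parameter to $0$, raises the second to $\ell+\tfrac12$ and cancels the weight. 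Since $P_k^{(0,\ell+1/2)}(\zeta)=(-1)^kP_k^{(\ell+1/2,0)}(-\zeta)$, I obtain $\bigl(-\tfrac{d}{2t\,dt}\bigr)^nF_k(t)=c_k\,P_k^{(\ell+1/2,0)}\bigl(1-2t^2/x^2\bigr)$ with a fully explicit constant $c_k$; tracking the Gamma factors through the three steps and using \eqref{beta via cFourier}--\eqref{beta and tilde beta} reproduces \eqref{K Jacobi}. (Everything in \eqref{KviaR} is smooth in $t^2$, so the singular-looking operator $\bigl(-\tfrac{d}{2t\,dt}\bigr)^n$ is harmless at $t=0$; alternatively, one can verify the same identification of the coefficients by substituting \eqref{RviaK} into \eqref{R coeff1} and using Fubini and the left-sided version of Askey's formula, which avoids differentiation altogether.)

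Step (ii) together with the claimed modes of convergence is the main obstacle. By the standard theorem on term-by-term differentiation it suffices that $\sum_k\tfrac{\tilde\beta_k(x)}{x}\bigl(-\tfrac{d}{2t\,dt}\bigr)^jF_k$ converge uniformly for $j=0,1,\dots,n$ on the set in question; since all these series share the structure ``bounded coefficients $\tilde\beta_k(x)$ times $k$-dependent constants of polynomial growth times Jacobi-type polynomials in $1-2t^2/x^2$,'' it is enough to treat \eqref{K Jacobi} itself. Combining \eqref{beta and tilde beta}, the asymptotics $\Gamma(\ell+2k+2)/(2k)!\sim(2k)^{\ell+1}$, and \eqref{Estimate for one beta p} gives $|\beta_k(x)|=O(k^{1/2-p})$, while Szeg\H{o}'s bound $|P_k^{(\ell+1/2,0)}(\cos\theta)|\le Ck^{-1/2}(\theta+k^{-1})^{-\ell-1}$ for $0\le\theta\le\frac\pi2$ (and its mirror near $\theta=\pi$), read through $\sin(\theta/2)=t/x$, shows that the prefactor $t^{\ell+1}$ exactly absorbs the growth of $P_k^{(\ell+1/2,0)}(1-2t^2/x^2)$ near $t=0$, so that on $[0,x-\varepsilon]$ the general term of \eqref{K Jacobi} is $O(|\beta_k(x)|k^{-1/2})=O(k^{-p})$. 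For $q\in AC[0,b]$ ($p=1$) this is only $O(k^{-1})$, so uniform convergence on $[0,x-\varepsilon]$ has to be extracted by an Abel-summation argument exploiting the oscillation of $P_k^{(\ell+1/2,0)}$ in the bulk together with the $\ell^2$-type decay of Corollary \ref{Cor DecayBetas} (equivalently, by Szeg\H{o}'s equiconvergence theorem, as in the proof of Proposition \ref{Prop R pointwise})---this is the delicate part. Near $t=x$ the argument tends to $-1$, where $P_k^{(\ell+1/2,0)}(-1)=(-1)^k$ is only bounded, so uniform convergence up to $t=x$ requires $\sum_k|\beta_k(x)|<\infty$; this fails for $q\in AC$ but holds for $q\in W_1^3[0,b]$ ($p=2$), where $|\beta_k(x)|=O(k^{-3/2})$, giving absolute and uniform convergence on all of $[0,x]$. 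Once these uniform convergences are in hand, the term-by-term differentiation of (ii) is valid and \eqref{K Jacobi} follows; evaluating the series at $t=x$ and matching \eqref{K Goursat} provides a consistency check.
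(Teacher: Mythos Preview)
Your evaluation of $\bigl(-\tfrac{d}{2t\,dt}\bigr)^n F_k$ via the change to $\xi=t^2$, recognition as a right Riemann--Liouville derivative, the quadratic transformation, and Askey's fractional-integral identity is a genuinely different and arguably cleaner route than the paper's. The paper instead expresses $P_{2k}^{(\ell+1,\ell+1)}$ through Gegenbauer polynomials, evaluates the integral by a Prudnikov table entry to get an explicit polynomial in $(t/x)^2$, and only after applying all $n$ derivatives recognizes the result as $P_k^{(\ell+1/2,0)}(1-2t^2/x^2)$ (for integer $\ell$ a separate short argument via \cite[4.22.2]{Szego1959} is given). Your approach keeps the Jacobi structure visible throughout and treats integer and non-integer $\ell$ uniformly.

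Your step (ii), however, has a real gap. You use only the pointwise bound \eqref{Estimate for one beta p}, which for $p=1$ yields terms of size $O(k^{-1})$ and forces you into an unspecified Abel/equiconvergence argument that you yourself flag as ``the delicate part.'' The paper avoids this entirely: it uses the $\ell^2$ estimate \eqref{Estimate for betas2} (not the pointwise one) together with Cauchy--Schwarz on dyadic blocks,
\[
\sum_{n=2^m+1}^{2^{m+1}} |\tilde\beta_n(x)|\,n^{\ell+1/2}
\;\le\; (2^{m+1})^{\ell+1}\sqrt{2^m}\Biggl(\sum_{n>2^m}\frac{|\tilde\beta_n(x)|^2}{n}\Biggr)^{1/2}
\;\le\; \frac{2^{\ell+1}}{2^{m/2}},
\]
to conclude that $\sum_n|\tilde\beta_n(x)|\,n^{\ell+1/2}<\infty$ already for $q\in AC[0,b]$. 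Since $|\beta_k(x)|\asymp |\tilde\beta_k(x)|\,k^{\ell+1}$ and $t^{\ell+1}|P_k^{(\ell+1/2,0)}(1-2t^2/x^2)|\le c_\varepsilon k^{-1/2}$ uniformly on $[0,x-\varepsilon]$, this gives \emph{absolute} uniform convergence of \eqref{K Jacobi} on $[0,x-\varepsilon]$ directly---no oscillation needed. The same dyadic trick handles the intermediate series needed for term-by-term differentiation: the paper checks convergence of the derivatives of orders $0,\dots,n-1$ at a single interior point (where Jacobi polynomials are $O(k^{-1/2})$) and uniform convergence of the top derivative on $[\varepsilon,x-\varepsilon]$. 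For $q\in W_1^3[0,b]$ the same estimate with $p=2$ gives the extra decay required near $t=x$, where $P_k^{(\ell+1/2,0)}(-1)=(-1)^k$ is merely bounded. Your alternative of pairing \eqref{RviaK} with \eqref{R coeff1} and Fubini is close in spirit to what the paper does later (Theorem~\ref{Thm Convergence K}) to extend to potentials satisfying only \eqref{Cond on q}, but there the matching of coefficients is done by approximating $q$ by $AC$ potentials and invoking continuous dependence (Appendix~\ref{AppB}), not by a direct integral identity.
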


\begin{proof}
Let $m:=[\ell]$, $\lambda :=\{\ell\}$, so $\ell = m+\lambda$.

Suppose initially that $\lambda\ne 0$. Taking $n=m+3$ in the formula \eqref{KviaR} we obtain
\begin{equation}\label{Eq01}
    K(x,t) = \frac{4\sqrt\pi t^{\ell+1}}{\Gamma(\ell+3/2)}\left(-\frac{d}{2tdt}\right)^{m+3}
    \sum_{n=0}^\infty\frac{\tilde \beta_n(x)}{\Gamma(2-\lambda)x^{2\ell+3}}\int_t^x (s^2-t^2)^{1-\lambda} s(x^2-s^2)^{\ell+1} P_{2n}^{(\ell+1,\ell+1)}\left(\frac sx\right)\,ds.
\end{equation}
Here we interchanged the sum and the integral which is possible because the series \eqref{R Jacobi} converges uniformly, see Proposition \ref{Prop R pointwise}, and the factor $s(s^2-t^2)^{1-\lambda}$ is bounded.

To evaluate the integral in \eqref{Eq01} we use the formula (2.21.1.4) from \cite{Prudnikov}. Recall that
\[
P_{2n}^{(\ell+1,\ell+1)}(x) = \frac{(\ell+2)_{2n}}{(2\ell+3)_{2n}} C_{2n}^{\ell+3/2}(x),
\]
where $C_n^\lambda$ are the Gegenbauer polynomials and $(x)_n$ is the Pochhammer symbol, see \cite{Abramowitz}. Then
\begin{equation*}
    \begin{split}
      I_n :=& \int_t^x(s^2-t^2)^{1-\lambda} s(x^2-s^2)^{\ell+1} C_{2n}^{\ell+3/2}\left(\frac sx\right)\,ds \\
        =& \frac{(-1)^n x^{2m+6} (2\ell+3)_{2n} \Gamma(\ell+2)}{2(2n)!} \sum_{k=0}^{m+n+3} \frac{(\lambda-1)_k}{k!} \frac{\Gamma(2-\lambda-k)}{\Gamma(m+n+4-k)} \frac{\Gamma(k+n+\lambda-3/2)}{\Gamma(k+\lambda-3/2)}\left(\frac tx\right)^{2k}.
    \end{split}
\end{equation*}
Here we used that in the formula (2.21.1.4) from \cite{Prudnikov} the sum in the first term is finite and the second term  disappear  due to the presence of the expression $\Gamma(-k)$, $k\in\mathbb{N}_0$ in the denominator. Using the reflection formula $\Gamma(z) \Gamma(1-z) = \frac{\pi}{\sin\pi z}$ we obtain
\[
\frac{(\lambda-1)_k\cdot \Gamma(2-\lambda-k)}{\Gamma(2-\lambda)} = \frac{\Gamma(\lambda+k-1)\Gamma(2-\lambda-k)}{\Gamma(\lambda-1)\Gamma(2-\lambda)} = \frac{\sin\pi(\lambda-1)}{\sin\pi(\lambda +k-1)} = (-1)^k,
\]
hence
\begin{equation}\label{Eq03}
      \frac{I_n}{\Gamma(2-\lambda)}  =\frac{(-1)^n x^{2m+6} (2\ell+3)_{2n} \Gamma(\ell+2)}{2(2n)!} \sum_{k=0}^{m+n+3}  \frac{(-1)^k\Gamma(k+n+\lambda-3/2)}{k!\Gamma(m+n+4-k)\Gamma(k+\lambda-3/2)}\left(\frac tx\right)^{2k}.
\end{equation}
Substituting \eqref{Eq03} in \eqref{Eq01} and noting that $\frac{d}{2tdt}=\frac{d}{dt^2}$ we obtain the expression
\begin{multline}\label{Eq04}
    K(x,t) = \frac{2(-1)^{m+3}\sqrt\pi t^{\ell+1}x^{3-2\lambda}}{\Gamma(\ell+3/2)} \left(\frac{d}{dt^2}\right)^{m+3} \sum_{n=0}^\infty \frac{(-1)^n\tilde\beta_n(x) \Gamma(\ell+2n+2)}{(2n)!}\\
    \times\sum_{k=0}^{m+n+3} \frac{(-1)^k}{k!(m+n+3-k)!}\frac{\Gamma(k+n+\lambda-3/2)}{\Gamma(k+\lambda-3/2)}\left(\frac tx\right)^{2k}.
\end{multline}

To justify the possibility for termwise differentiation of the series in \eqref{Eq04} we show that the series and all termwise derivatives up to the order $m+2$ converge at the point $t=x/\sqrt 2$ and that the termwise derivative of order $m+3$ is uniformly convergent with respect to $t$ on any $[\varepsilon,x-\varepsilon]$ containing the point $x/\sqrt 2$.

We recall that it is possible to introduce the generalized Jacobi polynomials $P^{(\alpha,\beta)}_n$ for arbitrary real parameters $\alpha$ and $\beta$, we refer the reader to \cite[Sect. 4.22]{Szego1959} for details. Note that (see \cite[22.2.2 and 22.5.2]{Abramowitz}) for any $s=0,1,\ldots, m+3$
\begin{align}
    \left(\frac{d}{dt^2}\right)^{s} & \sum_{k=0}^{m+n+3} \frac{(-1)^k}{k!(m+n+3-k)!}\frac{\Gamma(k+n+\lambda-3/2)}{\Gamma(k+\lambda-3/2)}\left(\frac tx\right)^{2k} \nonumber\\
\displaybreak[2]
    =&\sum_{k=s}^{m+n+3}\frac{(-1)^k \Gamma(k+n+\lambda-3/2)}{k!(m+n+3-k)!\Gamma(k+\lambda-3/2)}\frac{k!}{(k-s)! x^{2s}} \left(\frac tx\right)^{2(k-s)}\nonumber\\
\displaybreak[2]
    =&\frac{(-1)^{s}}{(m+n+3-s)!x^{2s}} \sum_{k=0}^{m+n+3-s} (-1)^k\frac{ (m+n+3-s)!}{k! (m+n+3-s-k)!}\frac{\Gamma(k+s+n+\lambda-3/2)}{\Gamma(k+s+\lambda-3/2)} \left(\frac tx\right)^{2k}\nonumber\\
    =&\frac{(-1)^{m+n+3}}{x^{2s}} \frac{\Gamma(2n+\ell+3/2)}{(m+n+3-s)!\Gamma(n+\ell+3/2)}G_{m+n+3-s}\left(2s+\lambda-m-9/2, s+\lambda-\frac 32; \frac {t^2}{x^2}\right) \nonumber\\
    =& \frac{(-1)^{m+n+3}}{x^{2s}} \frac{\Gamma(n+s+\lambda-3/2)}{\Gamma(n+\ell+3/2)}P_{m+n+3-s}^{(s-m-3,s+\lambda-5/2)}\left(2\frac {t^2}{x^2}-1\right),\label{GenJacobi s}
\end{align}
here $G_n(p,q,z)$ are the shifted Jacobi polynomials.

Hence at $t=x/\sqrt 2$ we need to obtain the absolute convergence of the series
\begin{equation}\label{Eq04a}
    \sum_{n=0}^\infty \frac{(-1)^{m+3}\tilde \beta_n(x)}{x^{2s}} \frac{\Gamma(\ell+2n+2)\Gamma(n+s+\lambda -3/2)}{\Gamma(2n+1)\Gamma(n+\ell+3/2)} P_{m+n+3-s}^{(s-m-3,s+\lambda-5/2)}(0).
\end{equation}
Values of the Jacobi polynomials at zero are uniformly bounded by $\frac c{\sqrt n}$, see \cite[Theorem 7.32.2]{Szego1959}. The fraction of Gamma functions behaves as $O(n^{s+\lambda - 2})$ as $n\to\infty$, see \cite[(IV.7.18)]{Suetin}, and since $s\le m+2$, we obtain that $s+\lambda-2\le m+\lambda=\ell$. Due to \eqref{Estimate for one beta p} for $q\in AC[0,b]=W_1^1[0,b]$ we have $|\tilde \beta_n(x)|\le \frac{c}{n^{\ell+3/2}}$. As a result, the terms of the series \eqref{Eq04a} decay at least as $\frac 1{n^2}$ and the series converges absolutely.

For the derivative of order $s=m+3$ we obtain from \eqref{GenJacobi s}
\begin{equation}\label{Eq05}
\begin{split}
\left(\frac{d}{dt^2}\right)^{m+3}&\sum_{k=0}^{m+n+3} \frac{(-1)^k}{k!(m+n+3-k)!}\frac{\Gamma(k+n+\lambda-3/2)}{\Gamma(k+\lambda-3/2)}\left(\frac tx\right)^{2k} \\
&= \frac{(-1)^{m+n+3}}{x^{2m+6}} P_n^{(0,\ell+1/2)}\left(2\frac {t^2}{x^2}-1\right) = \frac{(-1)^{m+1}}{x^{2m+6}} P_n^{(\ell+1/2,0)}\left(1-2\frac {t^2}{x^2}\right).
\end{split}
\end{equation}
The Jacobi polynomials in \eqref{Eq05} are uniformly bounded by $\frac{c}{\sqrt n}$ for  $t\in[\varepsilon,x-\varepsilon]$, see \cite[Theorem 7.5]{Suetin}. Due to \cite[(IV.7.18)]{Suetin} we have $\frac{\Gamma(\ell+2n+2)}{\Gamma(2n+1)}=O(n^{\ell+1})$, $n\to\infty$. So the uniform convergence of the series
\[
\sum_{n=0}^\infty \frac{(-1)^{n+m+1} \tilde\beta_n(x)\Gamma(\ell+2n+2)}{(2n)!}
    P_n^{(\ell+1/2,0)}\left(1-\frac{2t^2}{x^2}\right)
\]
would follow from the absolute convergence of the series
\begin{equation}\label{M estimate}
    \sum_{n=1}^\infty \frac{|\tilde \beta_n(x)|}{\sqrt n} n^{\ell+1}.
\end{equation}
Consider
\begin{align}
      \displaybreak[2]
      \sum_{n=2^m+1}^{2^{m+1}} &\frac{|\tilde \beta_n(x)|}{\sqrt n}n^{\ell+1}
      \le (2^{m+1})^{\ell+1} \sum_{n=2^m+1}^{2^{m+1}} \frac{|\tilde \beta_n(x)|}{\sqrt n} \le (2^{m+1})^{\ell+1} \sqrt{2^m} \left( \sum_{n=2^m+1}^{2^{m+1}} \frac{|\tilde \beta_n(x)|^2}{n}\right)^{1/2}\nonumber\\
        & \le (2^{m+1})^{\ell+1} \sqrt{2^m} \left( \sum_{n=2^m+1}^{\infty} \frac{|\tilde \beta_n(x)|^2}{n}\right)^{1/2}\le 2^{(m+1)(\ell+1)+m/2} \frac{1}{(2^{m})^{\ell+2}} = \frac{2^{\ell+1}}{2^{m/2}},\label{Eq07}
\end{align}
where we have used the estimate \eqref{Estimate for betas2} with $p=1$. Hence
\[
    \sum_{n=1}^\infty \frac{|\tilde \beta_n(x)|}{\sqrt n} n^{\ell+1} \le |\tilde \beta_1(x)|+\sum_{m=0}^\infty \frac{2^{\ell+1}}{2^{m/2}} < \infty,
\]
which proves the uniform convergence of the termwise derivative of order $m+3$.

Substituting \eqref{Eq05} into \eqref{Eq04} we obtain the representation \eqref{K Jacobi}.

In the presented proof the factor $t^{\ell+1}$ was left outside the sum, this was required to show termwise differentiability of the series in \eqref{Eq04}. Including this factor $t^{\ell+1}$ we can show the uniform convergence of \eqref{K Jacobi} for $t\in [0,x-\varepsilon]$. Indeed, as it follows from \cite[Theorem 7.5]{Suetin}, there is a constant $c$ such that for all $n$ and $t\in [0,x]$
\[
\left(\frac{t^2}{x^2}\right)^{\frac{\ell+1}{2}} \left(1-\frac{t^2}{x^2}\right)^{\frac 14} \left|P_n^{(\ell+1/2,0)}\left(1-\frac{2t^2}{x^2}\right)\right| \le \frac{c}{\sqrt n}.
\]
Hence for any $\varepsilon>0$ there is a constant $c_\varepsilon$ such that for all $n\ge 0$
\[
t^{\ell+1}\left|P_n^{(\ell+1/2,0)}\left(1-\frac{2t^2}{x^2}\right)\right| \le \frac{c_\varepsilon}{\sqrt n},\qquad t\in [0,x-\varepsilon],
\]
and the same proof involving the absolute convergence of the series \eqref{M estimate} shows uniform convergence of \eqref{K Jacobi} for $t\in [0,x-\varepsilon]$.

The uniform convergence on the whole segment $[0,x]$ can be obtained similarly taking into account that $P_n^{(\ell+1/2,0)}(1) = 1$ imposing additional requirements on the decay rate of the coefficients $\tilde \beta_n$, and, hence, on the smoothness of the potential to be able to apply the estimate \eqref{Estimate for betas2}. We left the details to the reader.

Now suppose that $\lambda=0$, i.e., $\ell\in\mathbb{Z}$. Taking $n=\ell+2$ in the formula \eqref{KviaR} we obtain
\begin{equation*}
\begin{split}
    K(x,t) &= \frac{4\sqrt\pi t^{\ell+1}}{\Gamma(\ell+3/2)}\left(-\frac{d}{dt^2}\right)^{\ell+1} \frac 1{2t} \frac d{dt}\int_x^t s\tilde R(x,s)\,ds \\
    & = \frac{2\sqrt \pi (-t)^{\ell+1}}{\Gamma(\ell+3/2)} \left(\frac{d}{dt^2}\right)^{\ell+1} \sum_{n=0}^\infty \frac{\tilde \beta_n(x)}{x} \left(1-\frac{t^2}{x^2}\right)^{\ell+1} P_{2n}^{(\ell+1,\ell+1)}\left(\frac tx\right).
\end{split}
\end{equation*}
Using the formulas (22.5.20) and (22.5.21) from \cite{Abramowitz} we obtain
\[
P_{2n}^{(\ell+1,\ell+1)}\left(\frac tx\right) = \frac{(\ell+2)_{2n} (\ell+3/2)_{n}}{(2\ell+3)_{2n} (1/2)_n} P_n^{(\ell+1,-1/2)}\left(2\frac{t^2}{x^2}-1\right).
\]
While formula (4.22.2) from \cite{Szego1959} states
\[
\left(1-\frac{t^2}{x^2}\right)^{\ell+1} P_n^{(\ell+1,-1/2)}\left(2\frac{t^2}{x^2}-1\right) = (-1)^{\ell+1}\frac{\Gamma(n+1/2)(n+\ell+1)!}{\Gamma(n+\ell+3/2) n!} P_{n+\ell+1}^{(-\ell-1,-1/2)}\left(2\frac{t^2}{x^2}-1\right).
\]
Hence
\begin{multline}\label{Weighted P via Neg P}
    \left(1-\frac{t^2}{x^2}\right)^{\ell+1} P_{2n}^{(\ell+1,\ell+1)}\left(\frac tx\right)\\ = (-1)^{\ell+1} \frac{(\ell+2)_{2n} (\ell+3/2)_{n}}{(2\ell+3)_{2n} (1/2)_n}\frac{\Gamma(n+1/2)(n+\ell+1)!}{\Gamma(n+\ell+3/2) n!} P_{n+\ell+1}^{(-\ell-1,-1/2)}\left(2\frac{t^2}{x^2}-1\right).
\end{multline}

Using the Legendre duplication formula $\Gamma(z)\Gamma(z+1/2) = 2^{1-2z}\sqrt\pi \Gamma(2z)$ we obtain
\begin{gather*}
    (\ell+3/2)_n (n+\ell+1)! = \frac{\Gamma(n+\ell+3/2)}{\Gamma(n+\ell+2)}{\Gamma(\ell+3/2)} = \frac{\sqrt\pi \Gamma(2n+2\ell+3)}{2^{2n+2\ell+2}\Gamma(\ell+3/2)},\\
    (1/2)_n n! = \frac{\Gamma(n+1/2)\Gamma(n+1)}{\Gamma(1/2)} = \frac{\Gamma(2n+1)}{2^{2n}},\qquad
    \Gamma(\ell+2)\Gamma(\ell+3/2) = \frac{\sqrt\pi \Gamma(2\ell+3)}{2^{2\ell+2}}.
\end{gather*}
Using these identities it follows from \eqref{Weighted P via Neg P} that
\begin{equation}\label{Weighted P via Neg P 2}
    \left(1-\frac{t^2}{x^2}\right)^{\ell+1} P_{2n}^{(\ell+1,\ell+1)}\left(\frac tx\right) = (-1)^{\ell+1}\frac{\Gamma(2n+\ell+2)\Gamma(n+1/2)}{(2n)!\Gamma(n+\ell+3/2)}
    P_{n+\ell+1}^{(-\ell-1,-1/2)}\left(2\frac{t^2}{x^2}-1\right).
\end{equation}
Observe that if one takes \eqref{Eq04} and applies 2 derivatives (and utilizes formula \eqref{Eq05} for $s=2$), one obtains a series containing exactly expressions \eqref{Weighted P via Neg P 2}. That is, the proof for the case of integer $\ell$ can be finished exactly the same as it was done in the case on non-integer $\ell$.
\end{proof}

\begin{lemma}\label{Lemma Orthonormal Base}
Let $x>0$ be fixed. The system of functions
\begin{equation}\label{Orthonormal Base}
\left\{ \frac{\sqrt{4n+2\ell+3}}{x^{\ell+3/2}} \cdot t^{\ell+1}P_n^{(\ell+1/2,0)} \left(1-\frac{2t^2}{x^2}\right) \right\}_{n=0}^\infty
\end{equation}
forms an orthonormal basis of $L_2(0,x)$.
\end{lemma}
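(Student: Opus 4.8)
The plan is to realize the system \eqref{Orthonormal Base} as the image, under an explicit unitary operator, of the normalized Jacobi polynomial system on $(-1,1)$, and then to invoke the classical completeness of the Jacobi polynomials. First I would introduce the substitution
\[
u = 1-\frac{2t^2}{x^2},\qquad t = x\sqrt{\frac{1-u}{2}},\qquad dt = -\frac{x}{2\sqrt2}\,(1-u)^{-1/2}\,du,
\]
which is a smooth bijection of $(0,x)$ onto $(-1,1)$. Since $t^{2\ell+2} = 2^{-\ell-1}x^{2\ell+2}(1-u)^{\ell+1}$, a direct change of variables gives, for all $m,n\ge0$,
\[
\int_0^x t^{2\ell+2}P_m^{(\ell+1/2,0)}\!\left(1-\frac{2t^2}{x^2}\right)P_n^{(\ell+1/2,0)}\!\left(1-\frac{2t^2}{x^2}\right)dt = \frac{x^{2\ell+3}}{2^{\ell+5/2}}\int_{-1}^1(1-u)^{\ell+1/2}P_m^{(\ell+1/2,0)}(u)P_n^{(\ell+1/2,0)}(u)\,du.
\]
By the orthogonality relation \cite[22.2.1]{Abramowitz} (applicable since $\ell+1/2>-1$ and $0>-1$), the right-hand side equals $\delta_{mn}\,x^{2\ell+3}/(4n+2\ell+3)$; multiplying by the stated normalization factors shows that \eqref{Orthonormal Base} is an orthonormal system in $L_2(0,x)$.

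To obtain the basis property I would promote this reduction to a genuine unitary equivalence. Set $\mathcal H := L_2\big((-1,1),(1-u)^{\ell+1/2}\,du\big)$ and define
\[
V\colon \mathcal H\to L_2(0,x),\qquad (Vg)(t):=\frac{2^{\ell/2+5/4}\,t^{\ell+1}}{x^{\ell+3/2}}\,g\!\left(1-\frac{2t^2}{x^2}\right).
\]
The same change of variables shows that $\|Vg\|_{L_2(0,x)}=\|g\|_{\mathcal H}$, and since $u\mapsto x\sqrt{(1-u)/2}$ is a bijection, $V$ is onto; hence $V$ is unitary. Writing $\{\tilde P_n^{(\ell+1/2,0)}\}_{n\ge0}$ for the Jacobi polynomials normalized in $\mathcal H$ and using $\|P_n^{(\ell+1/2,0)}\|_{\mathcal H}^2 = 2^{\ell+5/2}/(4n+2\ell+3)$, one checks directly that $V\tilde P_n^{(\ell+1/2,0)}$ is exactly the $n$-th function appearing in \eqref{Orthonormal Base}.

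Finally, it remains to recall that $\{\tilde P_n^{(\ell+1/2,0)}\}_{n\ge0}$ is an orthonormal basis of $\mathcal H$: the measure $(1-u)^{\ell+1/2}\,du$ is finite and supported on the compact set $[-1,1]$, so $C[-1,1]$, and therefore by the Weierstrass approximation theorem also the polynomials, are dense in $\mathcal H$, while applying Gram--Schmidt to $\{1,u,u^2,\dots\}$ in $\mathcal H$ produces precisely the normalized Jacobi polynomials. Since the unitary map $V$ carries an orthonormal basis of $\mathcal H$ onto \eqref{Orthonormal Base}, the latter is an orthonormal basis of $L_2(0,x)$, which proves the lemma. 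I do not anticipate any essential difficulty: the only steps requiring care are the bookkeeping of the powers of $2$ and $x$ in the change of variables and in the Jacobi norm, and the (standard) density of polynomials in the weighted space $\mathcal H$.
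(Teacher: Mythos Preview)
Your proof is correct. The orthonormality computation via the substitution $u=1-2t^2/x^2$ is exactly what the paper does. For completeness, however, the paper takes a different route: it observes that the span of the first $N+1$ functions in \eqref{Orthonormal Base} coincides with the span of $\{t^{2n+\ell+1}\}_{n=0}^N$ and then invokes the M\"untz theorem to conclude that this system is total in $L_2(0,x)$. Your argument instead packages the change of variables as a unitary $V:\mathcal H\to L_2(0,x)$ and transfers the classical completeness of the Jacobi polynomials (obtained from Weierstrass approximation in the weighted space) through $V$. Both arguments are short and standard; yours is arguably more self-contained, since it relies only on Weierstrass rather than on M\"untz, and it makes the orthonormality and the basis property consequences of a single unitary equivalence. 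The paper's route has the mild advantage of identifying explicitly the monomial span $\{t^{2n+\ell+1}\}$, which is conceptually relevant elsewhere in the paper when powers $t^{\ell+1}$ are factored out of the kernel.
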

\begin{proof}
The orthonormality of the functions from \eqref{Orthonormal Base} follows from the following equality
\begin{multline}\label{Scalar product}
\int_0^x t^{2\ell+2} P_n^{(\ell+1/2,0)}\left(1-\frac{2t^2}{x^2}\right) P_m^{(\ell+1/2,0)}\left(1-\frac{2t^2}{x^2}\right)\,dt \\
= \frac{x^{2\ell+3}}{2^{\ell+5/2}} \int_{-1}^1 (1-z)^{\ell+1/2} P_n^{(\ell+1/2,0)}(z)P_m^{(\ell+1/2,0)}(z)\,dz = \frac{x^{2\ell+3}}{2^{\ell+5/2}} \delta_{nm} g_m,
\end{multline}
where $\delta_{nm}$ is the Kronecker delta symbol and $g_m=\frac{2^{\ell+3/2}}{2m+\ell+3/2}$ is the square of the norm of the Jacobi polynomial $P_m^{(\ell+1/2,0)}$.

Observe that the linear span of the first $N+1$ functions from \eqref{Orthonormal Base} coincides with the linear span of the system $\{ t^{2n+\ell+1}\}_{n=0}^N$. Hence the completeness of the system \eqref{Orthonormal Base} immediately follows from the M\"{u}ntz theorem, \cite[Chapter 11, \S5]{DevoreLorentz}.
\end{proof}

\begin{theorem}\label{Thm Convergence K}
Let $q$ satisfy condition \eqref{Cond on q} and $x>0$ be fixed. Then the series in \eqref{K Jacobi} converges in $L_2(0,x)$ to $K(x,t)$.

Let $q\in W_1^{2p-1}[0,b]$ for some $p\in\mathbb{N}$ and $x>0$ be fixed. Then for the truncated series
\begin{equation}\label{KN Jacobi}
\begin{split}
    K_N(x,t):=&\sum_{n=0}^N \frac{2(-1)^n \sqrt\pi\tilde\beta_n(x)\Gamma(\ell+2n+2)}{(2n)!\Gamma(\ell+3/2)x^{2\ell+3}}t^{\ell+1}
    P_n^{(\ell+1/2,0)}\left(1-\frac{2t^2}{x^2}\right) \\
    = &\frac{t^{\ell+1}}{x^{\ell+2}} \sum_{n=0}^N \beta_n(x) P_n^{(\ell+1/2,0)}\left(1-\frac{2t^2}{x^2}\right)
\end{split}
\end{equation}
the following estimate holds
\begin{equation}\label{K Jacobi Estimate}
    \left\|K(x,t) - K_N(x,t)\right\|_{L_2(0,x)}\le \frac{C}{N^{p}}
\end{equation}
for all $N$ satisfying $2N>\ell+p+1$.
\end{theorem}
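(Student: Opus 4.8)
The starting point is that the orthonormal basis of $L_2(0,x)$ furnished by Lemma~\ref{Lemma Orthonormal Base}, namely $e_n(t):=\frac{\sqrt{4n+2\ell+3}}{x^{\ell+3/2}}\,t^{\ell+1}P_n^{(\ell+1/2,0)}\!\left(1-\frac{2t^2}{x^2}\right)$, $n\ge 0$, is, up to normalizing constants, exactly the system appearing in \eqref{K Jacobi}; comparing with \eqref{KN Jacobi} gives $K_N(x,\cdot)=\sum_{n=0}^N c_n e_n$ with $c_n:=\beta_n(x)/\sqrt{x(4n+2\ell+3)}$. Since $K(x,\cdot)\in L_2(0,x)$ (recalled after \eqref{Cond on q}) and $\{e_n\}$ is complete, the theorem reduces to two statements: (a) the $c_n$ are the Fourier coefficients of $K(x,\cdot)$, that is $c_n=\langle K(x,\cdot),e_n\rangle_{L_2(0,x)}$; and (b) under the hypothesis $q\in W_1^{2p-1}[0,b]$ one has $\sum_{n>N}|c_n|^2\le C/N^{2p}$. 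Statement (a) makes \eqref{K Jacobi} the Fourier--Jacobi expansion of $K(x,\cdot)$, which converges in $L_2(0,x)$, and by Parseval $\|K(x,\cdot)-K_N(x,\cdot)\|_{L_2(0,x)}^2=\sum_{n>N}|c_n|^2$, so (b) is precisely \eqref{K Jacobi Estimate}.

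To prove (a) I would argue by density in $q$. For $q\in W_1^3[0,b]$ Theorem~\ref{Thm K repr} gives \eqref{K Jacobi} with uniform convergence on $[0,x]$, hence $K_N(x,\cdot)\to K(x,\cdot)$ in $L_2(0,x)$; pairing with $e_m$ and using orthonormality yields $\langle K(x,\cdot),e_m\rangle=c_m$. For general $q$ satisfying \eqref{Cond on q}, pick $q^{(m)}\in C^\infty[0,b]$ with $x^\mu\tilde q^{(m)}\to x^\mu\tilde q$ in $L_1(0,b)$. By Corollary~\ref{Corr continuity K} the kernels satisfy $K^{(m)}(x,\cdot)\to K(x,\cdot)$ in $L_2(0,x)$, so $\langle K^{(m)}(x,\cdot),e_n\rangle\to\langle K(x,\cdot),e_n\rangle$; on the other hand $\langle K^{(m)}(x,\cdot),e_n\rangle=\beta_n^{(m)}(x)/\sqrt{x(4n+2\ell+3)}$, and $\beta_n^{(m)}(x)\to\beta_n(x)$ because, by \eqref{beta and tilde beta}, \eqref{beta via cFourier} and \eqref{R coeff1}, $\beta_n(x)$ is a fixed constant times $\int_{-x}^x\tilde R(x,t)\tilde P_{2n}^{(\ell+1,\ell+1)}(t/x)\,dt$ and $\tilde R(x,\cdot)$ depends continuously (in $L_1(-x,x)$) on $q$ --- either directly, $\tilde R$ being the Fourier transform of $d_\ell(\omega)\bigl(u(\omega,x)-b_\ell(\omega x)\bigr)$ and $u$ depending continuously on $q$, or via \eqref{RviaK} and Corollary~\ref{Corr continuity K}. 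Letting $m\to\infty$ gives $c_n=\langle K(x,\cdot),e_n\rangle$. A direct alternative to this density step is to insert \eqref{RviaK} into $\int_{-x}^x\tilde R(x,t)\tilde P_{2n}^{(\ell+1,\ell+1)}(t/x)\,dt$, interchange integrals by Fubini, and evaluate $\int_0^t(t^2-s^2)^\ell P_{2n}^{(\ell+1,\ell+1)}(s/x)\,ds$ by the Erd\'elyi--Kober fractional-integral formula for Jacobi polynomials (the same machinery as in the proof of Theorem~\ref{Thm K repr}); the density route avoids locating this identity.

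For (b), \eqref{beta and tilde beta} together with $\Gamma(2n+\ell+2)/(2n)!=O(n^{\ell+1})$ (\cite[(IV.7.18)]{Suetin}) furnishes a constant $\tilde C=\tilde C(x,\ell)$ with $|c_n|^2\le\tilde C\,n^{2\ell+1}|\tilde\beta_n(x)|^2$ for every $n\ge 1$. I would then bound $\sum_{n>N}n^{2\ell+1}|\tilde\beta_n(x)|^2$ by a dyadic decomposition: on a block $2^k\le n<2^{k+1}$ one has $n^{2\ell+1}\le(2^{k+1})^{2\ell+2}/n$, so by \eqref{Estimate for betas2} the block sum is at most $(2^{k+1})^{2\ell+2}\sum_{n\ge 2^k}|\tilde\beta_n(x)|^2/n\le C\,2^{-2pk}$; summing this geometric series over the blocks with $2^k\gtrsim N$ gives $\sum_{n>N}|c_n|^2\le C/N^{2p}$, which is \eqref{K Jacobi Estimate}. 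The only technicality is that \eqref{Estimate for betas2} requires $2n>\ell+p+1$, so for the finitely many $N$ with $(\ell+p+1)/2<N\le\ell+p+1$, where the smallest dyadic block is not covered, one instead uses the trivial bound $\|K(x,\cdot)-K_N(x,\cdot)\|_{L_2(0,x)}\le\|K(x,\cdot)\|_{L_2(0,x)}\le (\ell+p+1)^p\,\|K(x,\cdot)\|_{L_2(0,x)}/N^p$.

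The step I expect to be the main obstacle is (a): one must either track down and verify the exact Jacobi fractional-integral identity and carry the Gamma-function bookkeeping (as in the proof of Theorem~\ref{Thm K repr}), or carefully justify the limiting argument, in particular the continuous dependence of $\tilde R(x,\cdot)$, hence of each $\beta_n(x)$, on the potential, which rests on Appendix~\ref{AppB}. Once (a) is in hand, (b) is routine, the one point worth noting being that the dyadic decomposition is essential: a termwise estimate $|\tilde\beta_n(x)|^2\lesssim n^{-(2\ell+2p+1)}$ gives only $\sum_{n>N}|c_n|^2\lesssim N^{-(2p-1)}$, i.e. it loses a factor $\sqrt N$ compared with \eqref{K Jacobi Estimate}.
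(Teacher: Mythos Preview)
Your proposal is correct and follows essentially the same route as the paper: identify \eqref{K Jacobi} as the Fourier expansion of $K(x,\cdot)$ with respect to the orthonormal basis of Lemma~\ref{Lemma Orthonormal Base}, use Parseval to reduce \eqref{K Jacobi Estimate} to a tail bound $\sum_{n>N}|c_n|^2$, and control the tail by a dyadic decomposition combined with Corollary~\ref{Cor DecayBetas}; for general $q$ satisfying \eqref{Cond on q}, both you and the paper pass from smooth potentials by the continuity results of Appendix~\ref{AppB}. The only minor difference is that in the smooth step the paper works under $q\in AC[0,b]$ and establishes $L_2$-convergence of the series directly from orthogonality and the decay estimates (then identifies the limit via the pointwise convergence of Theorem~\ref{Thm K repr}), whereas you take $q\in W_1^3[0,b]$ to get uniform convergence on $[0,x]$ and infer $L_2$-convergence from that---a slightly stronger hypothesis for a slightly shorter argument, but not a different idea.
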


\begin{proof}
Suppose initially that $q\in AC[0,b]$, that is, Theorem \ref{Thm K repr} holds.

Note that by Lemma \ref{Lemma Orthonormal Base} the terms of the series \eqref{K Jacobi} are orthogonal.
Hence, up to a multiplicative constant (dependent on $x$), the $L_2(0,x)$ norm of the series in \eqref{K Jacobi} equals to \[
\sum_{n=0}^\infty \frac{|\tilde \beta_n(x)|^2}{4n+2\ell+3} \frac{\Gamma^2(2n+\ell+2)}{\Gamma^2(2n+1)} <c\sum_{n=0}^\infty |\tilde \beta_n(x)|^2 \cdot n^{2\ell+1}.
\]
where we used that $\frac{\Gamma(2n+\ell+2)}{\Gamma(2n+1)} = O(n^{\ell+1})$, $n\to\infty$. Now we proceed similarly to \eqref{Eq07}.
\begin{equation}\label{Eq08}
\sum_{n=2^m}^{2^{m+1}-1} |\tilde \beta_n(x)|^2 \cdot n^{2\ell+1} \le (2^{m+1})^{2\ell+2} \sum_{n=2^m}^{2^{m+1}-1} \frac{|\tilde \beta_n(x)|^2 }{n} \le \frac{2^{(m+1)(2\ell+2)}}{(2^m)^{2\ell+4}} = \frac{2^{2\ell+2}}{2^{2m}},
\end{equation}
where we used \eqref{Estimate for betas2} for $p=1$. The last inequality proves $L_2$ convergence of the series in \eqref{K Jacobi}.

Since the series in \eqref{K Jacobi} converges in $L_2(0,x)$ and converges pointwise to $K(x,t)$ for all $t\in(0,x)$, the $L_2$ limit of the series is equal to $K(x,t)$. Due to the orthogonality of the terms of the series
\begin{equation*}
    \left\|K(x,t) - K_N(x,t)\right\|^2_{L_2(0,x)}
    =\frac{4\pi}{\Gamma^2(\ell+3/2) x^{2\ell+3}} \sum_{n=N+1}^\infty  \frac{|\tilde \beta_n(x)|^2}{4n+2\ell+3} \frac{\Gamma^2(2n+\ell+2)}{\Gamma^2(2n+1)}\le \frac{C}{N^{2p}},
\end{equation*}
the proof of the last inequality is similar to \eqref{Eq08} with the use of \eqref{Estimate for betas2} with the parameter $p$.

Let us suppose now that $q$ satisfies \eqref{Cond on q} only. Since the integral kernel $K$ as a function of $t$ belongs to $L_2(0,x)$, it can be expanded into Fourier series with respect to system \eqref{Orthonormal Base}. We can write this expansion in the following form
\begin{equation}\label{K fourier}
    K(x,t) = \sum_{n=0}^\infty \frac{\alpha_n(x)}{x^{\ell+2}} \cdot t^{\ell+1}P_n^{(\ell+1/2,0)} \left(1-\frac{2t^2}{x^2}\right),
\end{equation}
where
\begin{equation}\label{Eq betan via K}
    \alpha_n(x) = \frac{(4n+2\ell+3) }{x^{\ell+1}} \int_0^x K(x,t)\cdot t^{\ell+1}P_n^{(\ell+1/2,0)}\left(1-2\frac{t^2}{x^2}\right)\,dt.
\end{equation}
Note that the coefficients $\beta_n$ are defined by \eqref{R coeff1}, \eqref{beta via cFourier} and \eqref{beta and tilde beta}. For absolutely continuous potentials coefficients $\alpha_n$ and $\beta_n$ coincide as follows from the first part of this theorem. Considering a sequence of absolutely continuous potentials $q_n$ converging to the potential $q$  in $L_1((0,b), \nu(x)dx)$ (see notations of Appendix \ref{AppB}) and using Corollary \ref{Corr continuity K} we can conclude that $\alpha_n=\beta_n$ for all $n\in\mathbb{N}_0$ for any potential $q$ satisfying condition \eqref{Cond on q}.
\end{proof}

%

\begin{remark}
The condition $q\in AC[0,b]$ may be excessive for Theorem \ref{Thm K repr} to hold. In Subsection \ref{Subsect 83} we numerically illustrate that the coefficients $\beta_n$ decay much faster than in the estimate \eqref{Estimate for betas2}, so even the condition \eqref{Cond on q} may turn to be sufficient for validity not only of Theorem \ref{Thm Convergence K}, but also of Theorem \ref{Thm K repr}.
\end{remark}

\section{Representation of the regular solution}
\label{Sect5}
In this section we substitute the representation \eqref{K Jacobi} for the integral kernel $K$ into \eqref{VolkTransmute} and obtain the representation \eqref{u NSBF} for the regular solution.

First we need the following lemma.
\begin{lemma}\label{Lemma Bessel j}
\begin{equation}\label{Eq Bessel J}
    \int_0^x t^{\ell+3/2} P_n^{(\ell+1/2,0)} \left(1-2\frac{t^2}{x^2}\right) J_{\ell+1/2}(\omega t)\,dt = \frac{x^{\ell+3/2}}{\omega} J_{\ell+2n+3/2}(\omega x).
\end{equation}
\end{lemma}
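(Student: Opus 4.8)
The plan is to prove \eqref{Eq Bessel J} by reducing it to a known integral formula for Jacobi polynomials against Bessel functions. First I would substitute $z = 1-2t^2/x^2$, so that $t = x\sqrt{(1-z)/2}$ and the integral becomes, up to explicit constants and powers of $x$, an integral of the form $\int_{-1}^1 (1-z)^{\ell+1/4} P_n^{(\ell+1/2,0)}(z) J_{\ell+1/2}\!\bigl(\omega x\sqrt{(1-z)/2}\bigr)\,dz$. This is precisely the shape covered by the classical formula often attributed to Feldheim/Vilenkin (see, e.g., Gradshteyn--Ryzhik 7.391.4, or Watson's treatise, or \cite{Prudnikov}): for appropriate parameters,
\[
\int_{-1}^1 (1-z)^\alpha (1+z)^\beta P_n^{(\alpha,\beta)}(z)\, z^{-\nu/2}\cdots
\]
— more precisely the special case
\[
\int_0^1 (1-u)^{\alpha} P_n^{(\alpha,\beta)}(1-2u) \, u^{\beta/2} J_\beta(y\sqrt u)\,du = \text{const}\cdot y^{-\beta-1} J_{\alpha+\beta+2n+1}(y),
\]
valid when the Bessel index matches the second Jacobi parameter. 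With $\alpha = \ell+1/2$ and $\beta = 0$ this is exactly our situation, with Bessel index $J_0$ shifted to $J_{\ell+2n+3/2}$ after accounting for the half-integer bookkeeping; I would double-check the index arithmetic $\alpha+\beta+2n+1 \leftrightarrow \ell+2n+3/2$ carefully against the substitution $u = t^2/x^2$, which also produces the Jacobian factor $2t\,dt = x^2\,du$ and the factor $t^{\ell+1/2} = x^{\ell+1/2}u^{(\ell+1/2)/2}$ that pairs with $J_{\ell+1/2}(\omega x\sqrt u)$ to match the hypothesis $u^{\beta/2}J_\beta$ of the cited formula.

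An alternative, more self-contained route I would keep in reserve is to expand $J_{\ell+2n+3/2}(\omega x)$ (or rather the right-hand side) in its power series in $\omega$, and independently expand the left-hand side: write $P_n^{(\ell+1/2,0)}(1-2t^2/x^2) = \sum_{k=0}^n a_{k,n}\,(t/x)^{2k}$ for known coefficients $a_{k,n}$, use the elementary moment $\int_0^x t^{\ell+3/2+2k} J_{\ell+1/2}(\omega t)\,dt$ (evaluated again by termwise integration of the Bessel series, giving a ${}_1F_2$ or a finite hypergeometric sum), and then verify that the resulting double sum over $k$ and the Bessel-series index collapses to the single power series of $x^{\ell+3/2}\omega^{-1}J_{\ell+2n+3/2}(\omega x)$. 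This amounts to proving an identity between two terminating hypergeometric expressions, which can be done with a Pfaff--Saalschütz type summation; the orthogonality relation \eqref{Scalar product} from Lemma \ref{Lemma Orthonormal Base} can serve as a consistency check on the leading behavior.

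The main obstacle I anticipate is purely bookkeeping rather than conceptual: getting all the Gamma-function constants, powers of $2$, and powers of $x$ to cancel correctly, and in particular confirming that the shift in Bessel index is exactly $+2n + 1$ (in the $J_0 \to J_{\ell+2n+3/2}$ normalization, i.e., $+2n$ on top of the $\ell+3/2$ baseline). A secondary subtlety is justifying termwise integration of the Bessel series against the polynomial weight on $[0,x]$, which is immediate since the series for $J_{\ell+1/2}(\omega t)$ converges uniformly on compacts and the integrand is dominated by an integrable function. Since the statement is an identity between entire functions of $\omega$, it suffices to verify it for $\omega$ in any set with an accumulation point — e.g., by matching Taylor coefficients at $\omega = 0$ — so the cleanest write-up likely proceeds via the power-series comparison, citing \cite{Prudnikov} for the closed-form evaluation of the key moment integral.
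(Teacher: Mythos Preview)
Your plan is the same as the paper's: change variables $t\mapsto z$ and invoke a tabulated Bessel--Jacobi integral from \cite{Prudnikov}. The paper uses the substitution $z=2t^2/x^2-1$ and the general formula 2.22.12.3 of \cite{Prudnikov}, but finds that with one Jacobi parameter equal to $0$ that formula returns the indeterminate product $(0)_n\cdot{}_2F_3(\ldots;1-n,\ldots)$; it therefore perturbs the second Jacobi index to $\ell+1/2+\varepsilon$, applies the formula, and passes to the limit $\varepsilon\to0$ through a chain of hypergeometric reductions ending in ${}_0F_1$ and hence $J_{\ell+2n+3/2}$. You do not anticipate this degeneracy, and your index bookkeeping is internally inconsistent: you set $\beta=0$ (the second Jacobi parameter of $P_n^{(\ell+1/2,0)}$) yet claim that $u^{(\ell+1/2)/2}J_{\ell+1/2}$ matches the template $u^{\beta/2}J_\beta$, which would force $\beta=\ell+1/2$. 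After the reflection $P_n^{(\ell+1/2,0)}(1-2u)=(-1)^nP_n^{(0,\ell+1/2)}(2u-1)$ the Bessel index \emph{does} pair with the second parameter, but then the first parameter is $0$ and the weight $(1-u)^\alpha$ is absent, so you land precisely on the degenerate edge case the paper has to regularize.

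If you can cite a directly applicable special case (it exists: after $t=xs$ the identity reads $\int_0^1 s^{\nu+1}P_n^{(\nu,0)}(1-2s^2)J_\nu(ys)\,ds=y^{-1}J_{2n+\nu+1}(y)$ with $\nu=\ell+1/2$, and can be derived from Sonine's finite integrals), your write-up would be shorter than the paper's $\varepsilon$-limit argument. Failing a clean reference, your power-series alternative is sound and sidesteps the degeneracy entirely; just be aware that the hypergeometric collapse you would need is exactly the computation the paper carries out in disguise.
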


\begin{proof}
Applying the change of variable $z=2\frac{t^2}{x^2}-1$ the integral converts to
\[
I_n:=\frac{(-1)^n x^{\ell+5/2}}{2^{\ell/2+9/4}}\int_{-1}^1 (z+1)^{\ell/2+1/4}P_n^{(0,\ell+1/2)}(z) J_{\ell+1/2}\left(\frac{\omega x}{\sqrt 2} \sqrt{z+1}\right)\,dz.
\]
We will calculate this integral using the formula 2.22.12.3 from \cite{Prudnikov}. We would like to point out that direct application of this formula results in an expression containing
\[
(0)_n\cdot \,_2F_3\left(\ell+\frac 32, 1; \ell+n+\frac 52, 1-n, \ell+\frac 32; -\frac{\omega^2x^2}4\right),
\]
that is, $0$ in the numerator and $0$ in the denominator due to the negative integer parameter in the hypergeometric function. To overcome this difficulty we apply the classical technique of passage to the limit.
Consider for $\varepsilon>0$ the integral
\[
I^\varepsilon_n:=\frac{(-1)^n x^{\ell+5/2}}{2^{\ell/2+9/4}}\int_{-1}^1 (z+1)^{\ell/2+1/4}P_n^{(0,\ell+1/2+\varepsilon)}(z) J_{\ell+1/2}\left(\frac{\omega x}{\sqrt 2} \sqrt{z+1}\right)\,dz.
\]
It is easy to see that $I_n^\varepsilon\to I_n$ as $\varepsilon\to 0$. On the other hand, the formula 2.22.12.3 from \cite{Prudnikov} gives
\begin{align*}
I_n^\varepsilon &= \frac{(-1)^nx^{\ell+5/2}}{2^{\ell/2+9/4}}\frac{(-1)^n (\varepsilon)_n}{n!\Gamma(\ell+3/2)} \cdot 2B\left(\ell+\frac 32,n+1\right) \left(\frac{\omega x}{\sqrt 2}\right) ^{\ell+1/2}\\
 &\quad \times\,_2F_3\left(\ell+\frac 32,1-\varepsilon;\ell+n+\frac 52,1-n-\varepsilon,\ell+\frac 32; -\frac{\omega^2x^2}4\right)\\
 &=-\frac{x^{\ell+5/2}(\varepsilon+1)_{n-1}B(\ell+3/2,n+1)}{2^{\ell/2+5/4}n!\Gamma(\ell+3/2)}\left(\frac{\omega x}{\sqrt 2}\right) ^{\ell+1/2} \\
 &\quad\times \bigl((1-n-\varepsilon) + (n-1)\bigl)\cdot \,_1F_2\left(1-\varepsilon;\ell+n+\frac 52,1-n-\varepsilon; -\frac{\omega^2x^2}4\right),
\end{align*}
where $_2F_3$ was reduced to $_1F_2$ due to two equal parameters. Applying in the last expression the formula
(see \cite[(4.21.5)]{Szego1959})
\begin{equation*}\label{passage to limit}
\lim_{c\to 1-m}(c+m-1)\cdot\,_1F_2(a;b,c;x) \\= (-1)^{m-1}\frac{(a)_m x^m}{m!(m-1)! (b)_m} \,_1F_2(a+m; b+m, m+1; x)
\end{equation*}
and noting also that the hypergeometric function $_1F_2$ reduces to $_0F_1$ due to two pairs of equal parameters we obtain that
\begin{align*}
    I_n&= -\frac{x^{\ell+5/2}}{2^{\ell/2+5/4}} \frac{(n-1)!B\left(\ell+3/2,n+1\right)}{n! \Gamma(\ell+3/2)}  \left(\frac{\omega x}{\sqrt 2}\right)^{\ell+1/2} \\
    \displaybreak[2]
    &\quad\times \frac{(-1)^{n-1} n!}{n! (n-1)! (\ell+n+5/2)_n} \left(-\frac{\omega^2 x^2}{4}\right)^n \,_0F_1\left( ;\ell+2n+\frac 52; -\frac{\omega^2x^2}{4}\right) \\
    \displaybreak[2]
    &= \frac{x^{\ell+5/2} (\omega x)^{2n+\ell+1/2}}{ 2^{2n+\ell+3/2} \Gamma(\ell+2n+5/2)}\,_0F_1\left( ;\ell+2n+\frac 52; -\frac{\omega^2x^2}{4}\right)\\
    &= \frac{x^{\ell+5/2} (\omega x)^{2n+\ell+1/2}}{ 2^{2n+\ell+3/2} \Gamma(\ell+2n+5/2)} \frac{\Gamma(\ell+2n+5/2) 2^{\ell+2n+3/2}}{(\omega x )^{\ell+2n+3/2}} J_{\ell+2n+3/2}(\omega x) \\
    & =\frac{x^{\ell+3/2}}\omega J_{\ell+2n+3/2}(\omega x),
\end{align*}
where formula 9.1.69 from \cite{Abramowitz} was used.
\end{proof}

\begin{theorem}\label{thm Rep Sol}
Let $q$ satisfy condition \eqref{Cond on q}. Then the regular solution $u(\omega,x)$ of \eqref{PertBessel} satisfying the asymptotic condition \eqref{bl asympt} as $x\to 0$ admits the following representation
\begin{equation}\label{Sol Repr}
    u(\omega, x) = \omega x j_\ell(\omega x) + \sum_{n=0}^\infty \beta_n(x) j_{\ell+2n+1}(\omega x),
\end{equation}
where the coefficients $\beta_n$ are related to $\tilde \beta_n$ by \eqref{beta and tilde beta}.
The series \eqref{Sol Repr} converges absolutely and uniformly with respect to $\omega$ on any compact subset of the complex plane.

Suppose additionally that $q\in W_1^{2p-1}[0,b]$ for some $p\in\mathbb{N}$. Denote by
\begin{equation}\label{uN}
u_N(\omega, x) = \omega x j_\ell(\omega x) + \sum_{n=0}^N \beta_n(x) j_{\ell+2n+1}(\omega x)
\end{equation}
an approximate solution obtained by truncating the series in \eqref{Sol Repr}. Then the following estimate holds uniformly for $\omega\in\mathbb{R}$
\begin{equation}\label{Estimate for Sol}
    |u(\omega, x) - u_N(\omega, x)| \le \frac{c(x)}{N^p},\qquad 2N>\ell+p+1,
\end{equation}
here $p$ is the parameter of smoothness of the potential $q$.
\end{theorem}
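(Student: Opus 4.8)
The plan is to substitute the Fourier–Jacobi representation \eqref{K Jacobi} of the integral kernel $K$ into the transmutation formula \eqref{VolkTransmute} and integrate term by term. Writing $b_\ell(\omega t) = \omega t j_\ell(\omega t)$ in terms of the Bessel function $J_{\ell+1/2}$ via $j_\ell(z) = \sqrt{\pi/(2z)}\,J_{\ell+1/2}(z)$, each term of $\int_0^x K(x,t) b_\ell(\omega t)\,dt$ becomes, up to the constants collected in \eqref{beta and tilde beta}, an integral of the form
\[
\int_0^x t^{\ell+3/2} P_n^{(\ell+1/2,0)}\!\left(1-\frac{2t^2}{x^2}\right) J_{\ell+1/2}(\omega t)\,dt,
\]
which Lemma \ref{Lemma Bessel j} evaluates in closed form as $\frac{x^{\ell+3/2}}{\omega} J_{\ell+2n+3/2}(\omega x)$. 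Converting back to spherical Bessel functions $j_{\ell+2n+1}$ and tracking the normalization constants $\beta_n$ should reproduce exactly \eqref{Sol Repr}. I would first justify the interchange of summation and integration: for $q\in AC[0,b]$ this follows from the uniform convergence on $[0,x-\varepsilon]$ established in Theorem \ref{Thm K repr} together with the integrability of $K(x,\cdot)$ near $x$; for general $q$ satisfying \eqref{Cond on q} one uses instead the $L_2(0,x)$ convergence from Theorem \ref{Thm Convergence K} and the fact that $b_\ell(\omega\cdot)\in L_2(0,x)$, so the pairing against $b_\ell(\omega\cdot)$ passes to the limit.

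For the convergence of the resulting series \eqref{Sol Repr}, the key input is the decay estimate \eqref{Estimate for betas} (or \eqref{Estimate for betas2}) for the coefficients $\tilde\beta_n$, combined with standard uniform bounds on $j_{\ell+2n+1}(\omega x)$. On a compact subset of $\mathbb{C}$ one has $|j_\nu(z)|\le C |z|^\nu/(2^\nu \Gamma(\nu+3/2))$ or, more usefully for large order, a bound of the form $|j_{\ell+2n+1}(\omega x)| \le c\, r^{2n}/(2n)!$ uniform in $\omega$ on $|\omega|\le r$; pairing this decay against the polynomial growth of $\beta_n$ (which, by \eqref{beta and tilde beta} and \eqref{Estimate for one beta}, grows at most like $n^{\ell+2n+3/2}/(2n)!$ times $\Gamma$-type factors that the $1/(2n)!$ in the Bessel bound more than compensates) yields absolute and uniform convergence on compacts. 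This is essentially the same dyadic-block summation argument used in \eqref{Eq07} and \eqref{Eq08}.

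The uniform-in-$\omega\in\mathbb{R}$ error estimate \eqref{Estimate for Sol} is obtained differently, and this is the step I expect to be the crux. One writes $u(\omega,x)-u_N(\omega,x) = \int_0^x \bigl(K(x,t)-K_N(x,t)\bigr) b_\ell(\omega t)\,dt$, where $K_N$ is the truncated kernel \eqref{KN Jacobi}, and applies the Cauchy–Schwarz inequality:
\[
|u(\omega,x)-u_N(\omega,x)| \le \|K(x,\cdot)-K_N(x,\cdot)\|_{L_2(0,x)} \cdot \|b_\ell(\omega\cdot)\|_{L_2(0,x)}.
\]
The first factor is bounded by $C/N^p$ thanks to Theorem \ref{Thm Convergence K}; the second factor is bounded \emph{uniformly in $\omega\in\mathbb{R}$} because $b_\ell(\omega t)=\omega t j_\ell(\omega t)$ stays bounded as $\omega\to\infty$ (this is precisely the advantage of the perturbed-Bessel transmutation over the Mehler representation emphasized in the introduction, cf. \cite[(9.2.1)]{Abramowitz}), so $\|b_\ell(\omega\cdot)\|_{L_2(0,x)}\le c(x)$ for all real $\omega$. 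Combining the two factors gives \eqref{Estimate for Sol}. The main obstacle is making sure the term-by-term integration is fully legitimate in the general case \eqref{Cond on q} where only $L_2$ convergence of the kernel series is available; here one should note that $K_N(x,\cdot)$ is exactly the $N$-th partial sum of the Fourier expansion of $K(x,\cdot)$ in the orthonormal basis \eqref{Orthonormal Base} (by the identification $\alpha_n=\beta_n$ from Theorem \ref{Thm Convergence K}), so the integration against $b_\ell(\omega\cdot)$ is simply an inner product in $L_2(0,x)$ and the Cauchy–Schwarz bound applies directly without any pointwise considerations, while Lemma \ref{Lemma Bessel j} supplies the closed form for each individual term.
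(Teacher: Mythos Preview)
Your proposal is correct and follows essentially the same route as the paper: substitute \eqref{K Jacobi} into \eqref{VolkTransmute}, justify the interchange of sum and integral via the $L_2(0,x)$ convergence of Theorem \ref{Thm Convergence K}, evaluate each term with Lemma \ref{Lemma Bessel j}, and obtain \eqref{Estimate for Sol} by the Cauchy--Schwarz inequality combined with \eqref{K Jacobi Estimate} and the uniform boundedness of $b_\ell(\omega t)$ on $\mathbb{R}$ and on compacta of $\mathbb{C}$. The only minor difference is that you sketch a separate dyadic-block argument for absolute convergence on compact subsets of $\mathbb{C}$, whereas the paper simply lets the same Cauchy--Schwarz bound (with $b_\ell$ bounded on compacta) do double duty for both the real-line estimate and the compact-set convergence; your version is slightly more explicit but not materially different.
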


\begin{proof}
Substituting \eqref{K Jacobi} into \eqref{Transmut}, changing the integral and the sum (which is possible due to $L_2$-convergence of \eqref{K Jacobi}) and applying Lemma \ref{Lemma Bessel j} we obtain
\begin{align*}
    u(\omega, x) &= \omega x j_\ell(\omega x) + \sum_{n=0}^\infty \frac{2\sqrt\pi (-1)^n \tilde\beta_n(x)\Gamma(\ell+2n+2)}{x^{2\ell+3} \Gamma(\ell+3/2) (2n)!}  \int_0^x t^{\ell+1} P_n^{(\ell+1/2,0)} \left(1-2\frac{t^2}{x^2}\right) \omega t j_\ell(\omega t)\,dt \\
    &=\omega x j_\ell(\omega x) + \sum_{n=0}^\infty \frac{2\sqrt\pi (-1)^n \tilde\beta_n(x)\Gamma(\ell+2n+2)}{x^{2\ell+3} \Gamma(\ell+3/2) (2n)!}  \\
    &\quad \times \sqrt{\frac{\pi \omega} 2}\int_0^x t^{\ell+1} P_n^{(\ell+1/2,0)} \left(1-2\frac{t^2}{x^2}\right) \sqrt{ t} J_{\ell+1/2}(\omega t)\,dt \\
    &= \omega x j_\ell(\omega x) + \sum_{n=0}^\infty \frac{2\sqrt{\pi} (-1)^{n} \tilde \beta_n(x) \Gamma(\ell+2n+2)}{x^{\ell+1}\Gamma(\ell+3/2)(2n)!} j_{\ell+2n+1}(\omega x).
\end{align*}

For the proof of convergence rate estimate note that
\[
u_N(\omega, x) = \omega x j_\ell(\omega x) + \int_0^x K_N(x,t) \omega t j_\ell(\omega x)\,dt,
\]
where $K_N$ was introduced in \eqref{KN Jacobi}. Hence recalling that the function $b_\ell(z) = zj_\ell(z)$ is bounded on the whole real line, see \cite[(9.2.1)]{Abramowitz}, and is bounded on the compact subsets of $\mathbb{C}$, and applying the Cauchy-Schwarz inequality we obtain
\begin{align*}
\displaybreak[2]
|u(\omega, x) - u_N(\omega,x)| &= \left|\int_0^x \bigl(K(x,t) - K_N(x,t)\bigr) b_\ell(\omega t)\,dt\right|\\
&\le \|K(x,\cdot)-K_N(x,\cdot)\|_{L_2(0,x)} \left(\int_0^x |b_\ell(\omega t)|^2\,dt\right)^{1/2} \le \frac{c(x)}{N^p}.
\qedhere
\end{align*}
\end{proof}

\section{Representation for the derivative of the regular solution}
\label{Sect6}
In this section we obtain a representation for the derivative of the regular solution. We are looking for a representation possessing the same remarkable properties of uniform with respect to $\omega\in\mathbb{R}$ error bounds and exponentially fast convergence for smooth potentials. For that, instead of differentiating the representation \eqref{Sol Repr} with respect to $x$, we differentiate \eqref{Transmut} and proceed similarly to what were done in Sections \ref{Sect3}--\ref{Sect5}. First we recall some facts about the derivative, the transmutation operator and Mehler's representation from \cite{KTC2017}.

Let $u(\omega,x)$ be the regular solution of \eqref{PertBessel} satisfying the asymptotics \eqref{bl asympt} as $x\to 0$. Then
\begin{equation}\label{u prime via transmut}
    u'(\omega, x) = \omega b_\ell'(\omega x) + \frac{Q(x)}2 b_\ell(\omega x) + \int_0^x K_1(x,t) b_\ell(\omega t)\,dt,
\end{equation}
here $K_1(x,t)$ denotes $\partial_x K(x,t)$.

For $q\in AC[0,b]$ the following Mehler's type representation holds
\begin{equation*}
    d(\omega)u'(\omega, x) = d(\omega) \left(\omega b_\ell'(\omega x) + \frac{Q(x)}2 b_\ell(\omega x)\right)+\int_{-x}^x \tilde R^{(2)}(x,t) e^{i\omega t}\,dt,
\end{equation*}
where $\tilde R^{(2)}$ is an even, compactly supported on $[-x,x]$ function satisfying $\tilde R^{(2)}(x,\cdot) \in W_2^{\ell+3/2-\varepsilon}(\mathbb{R})$.

The integral kernels $K_1$ and $\tilde R^{(2)}$ are related by the following formula
\begin{equation}\label{R2 via K1}
    \tilde R^{(2)}(x,s) = \frac{\Gamma \left( \ell+\frac{3}{2}\right) }{\sqrt{\pi }\Gamma (\ell+1)}
\int_{s}^{x}K_1(x,t)t^{-\ell}(t^{2}-s^{2})^{\ell}\,dt.
\end{equation}

The following estimates hold. If $q\in AC[0,b]$, then
\begin{equation*}
    \left| u'(\omega,x ) -\omega b_\ell'(\omega x) - \frac{Q(x)}2 b_\ell(\omega x) \right|\le \frac{C}{|\omega|},\qquad \omega \in \mathbb{R}.
\end{equation*}
And if $q\in W_1^{2p+1}[0,b]$ for some $p\in \mathbb{N}$, then
\begin{equation*}
    u'(\omega,x ) =\omega b_\ell'(\omega x) - \frac{Q(x)}2 b_\ell(\omega x) + \sum_{k=1}^p \tilde A_k(x) \frac{J_{\ell+k+1/2}(\omega x)}{\omega^{k-1}} + \mathcal{R}_{p+1,x}(\omega, x),
\end{equation*}
where
\begin{equation*}
    |\mathcal{R}_{p+1,x}(\omega ,x)|\le \frac {c}{|\omega|^{p+1}},\qquad \omega\in\mathbb{R}.
\end{equation*}
By the Paley-Wiener theorem
\[
d(\omega)\mathcal{R}_{p+1,x}(\omega, x) = \int_{-x}^x \tilde R^{(2)}_p(x,t) e^{i\omega t}\,dt,
\]
where $\tilde R^{(2)}_p$ as a function of $t$ is even, compactly supported on $[-x,x]$ and satisfies $\tilde R^{(2)}_p\in W_2^{\ell+3/2+p-\varepsilon}(\mathbb{R})$. Moreover,
\[
\tilde R^{(2)}(x,t) - \tilde R^{(2)}_p(x,t) = \left(1-\frac{t^2}{x^2}\right)^{\ell+1} P_p(x,t),
\]
where $P_p$ is a polynomial in $t$ of degree less or equal to $p$.

Comparing the above formulas for the derivative with the corresponding ones for the solution, one can see that the same relations hold between the integral kernels $K$ and $\tilde R$ and between the integral kernels $K_1$ and $\tilde R^{(2)}$. Similar estimates for the solution and its derivative hold, similar improvement exists depending on the smoothness of the potential $q$. The only difference is that the requirement $q\in W_1^{2p-1}[0,b]$ should be changed to $q\in W_1^{2p+1}[0,b]$. With this change, the scheme presented in Sections \ref{Sect3}--\ref{Sect5} can be applied without significant changes for the derivative. We left the details for the reader and only present the final results.

\begin{proposition}
Let $q\in W_1^{2p+1}[0,b]$ for some $p\in \mathbb{N}_0$. Then the integral kernel $\tilde R^{(2)}$ has the following representation
\begin{equation}\label{R2 Jacobi}
    \tilde R^{(2)}(x,t) = \left(1-\frac{t^2}{x^2}\right)^{\ell+1}\sum_{n=0}^\infty \frac{\tilde\gamma_n(x)}{x} P_{2n}^{(\ell+1,\ell+1)}\left(\frac tx\right).
\end{equation}
Denote by
\[
\tilde R^{(2)}_N(x,t) = \left(1-\frac{t^2}{x^2}\right)^{\ell+1}\sum_{n=0}^N \frac{\tilde\gamma_n(x)}{x} P_{2n}^{(\ell+1,\ell+1)}\left(\frac tx\right).
\]
Let $x>0$ be fixed. Then the following estimates hold
\begin{equation}\label{Estimate 1 for R2}
    \biggl\| \frac{\tilde R^{(2)}(x,t) - \tilde R_N^{(2)}(x,t)}{(1-t^2/x^2)^{(\ell+1)/2}}\biggr\|_{L_2(-x,x)}\le \frac{c_1}{(2N-\ell-p-1)^{\ell+p+1}},\quad 2N>\ell+p+1,
\end{equation}
and
\begin{equation}\label{Estimate 2 for R2}
\bigl|\tilde R^{(2)}(x,t) - \tilde R_N^{(2)}(x,t)\bigr| \le c_2\left(1-\frac{t^2}{x^2}\right)^{\frac{2\ell+1}{4}} \frac{\ln N}{N^{\ell+1+p}}, \qquad t\in [-x,x],\ 2N>\ell+p+1.
\end{equation}
\end{proposition}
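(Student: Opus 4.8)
The plan is to mimic exactly the development of Sections~\ref{Sect3}--\ref{Sect4} for the kernel $\tilde R$, now applied to $\tilde R^{(2)}$, using the dictionary provided in the paragraph preceding the statement: every place where the hypothesis $q\in W_1^{2p-1}[0,b]$ was used for $\tilde R$, we instead use $q\in W_1^{2p+1}[0,b]$ for $\tilde R^{(2)}$, and the smoothness class of $\tilde R^{(2)}$ is $W_2^{\ell+3/2+p-\varepsilon}(\mathbb{R})$, matching what $\tilde R_p$ had in \eqref{Improvement of R}. First I would record the analogue of Proposition~\ref{Prop R near pm x}: since $\tilde R^{(2)}$ is the Fourier transform of $d(\omega)\mathcal R_{p+1,x}(\omega,x)$, which is continuous and decays like $|\omega|^{-(\ell+p+2)}$, Proposition~\ref{Prop LipZyg} gives $\tilde R^{(2)}(x,\cdot)\in\operatorname{Zyg}(\ell+p+1,\mathbb{R})$ (and $\operatorname{Lip}$ when $\ell+p\notin\mathbb{Z}$), together with the factorized bound $\partial_t^m\tilde R^{(2)}(x,t)=(x^2-t^2)^{\ell+p+1-m}r_m(t)$ for $m\le[[\ell+p+1]]$; the proof is the Taylor-expansion-at-$\pm x$ argument, verbatim. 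Note here that the relation $\tilde R^{(2)}-\tilde R^{(2)}_p=(1-t^2/x^2)^{\ell+1}P_p(x,t)$ plays the same role as \eqref{Improvement of R}, so it suffices to work with $\tilde R^{(2)}_p$ and invoke that $h$ and $h_p$ differ by a polynomial.

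Next I would set $g(z)=\tilde R^{(2)}(x,zx)$, $h(z)=g(z)/(1-z^2)^{\ell+1}$ and establish the analogue of Proposition~\ref{Prop Smoothness R}, namely $h\in S^{(\ell+1)}_{[[\ell+p+1]]}$. This follows from the same computation \eqref{h derivative}--\eqref{h derivative weight}: each term $g^{(m-k)}(z)p_k(z)/(1-z^2)^{\ell+1+k}$ weighted by $W_{\ell+1+m}$, using the factorization $g^{(m-k)}(z)=x^{\ast}(1-z^2)^{\ell+p+1-m+k}r_{m-k}(zx)$, reduces to $x^{\ast}|r_{m-k}|^2|p_k|^2/(1-z^2)^{m-\ell-p-1}$, which is integrable over $(-1,1)$ since $m\le[[\ell+p+1]]<\ell+p+3/2$. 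Then, exactly as in \eqref{R residuo and En}--\eqref{Estimate for w}, Theorem~\ref{Thm Approx} applied to $h\in S^{(\ell+1)}_{[[\ell+p+1]]}$ with the modulus-of-continuity estimate $\omega(W_{\ell+1+[[\ell+p+1]]};h^{([[\ell+p+1]])};\delta)\le C\delta^{\{\{\ell+p+1\}\}}$ yields $E_{2N}(W_{\ell+1};h)\le C(2N-\ell-p-1)^{-(\ell+p+1)}$, which is precisely \eqref{Estimate 1 for R2} since the left side of \eqref{Estimate 1 for R2} equals $\sqrt{x}\,E_{2N}(W_{\ell+1};h)$. For \eqref{Estimate 2 for R2}, I would invoke the pointwise convergence result of Proposition~\ref{Prop R pointwise} (which is itself just \cite[Theorems 7.6 and 7.7]{Suetin} applied to a function with the smoothness we have just established), giving the weighted bound with the factor $(1-t^2/x^2)^{(2\ell+1)/4}$ and the logarithmic loss $\ln N$.

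The one structural point that needs a word of care — and the only candidate for a genuine obstacle — is that the modulus-of-continuity estimate \eqref{Estimate for k eq 0}--\eqref{Estimate for k gt 0} used the inequality $1-2\mu>0$ to absorb the leftover power $\sin^{1-2\mu}\theta$ near the endpoints. In the present setting the analogous leftover exponent is $2(\ell+p+1)-2[[\ell+p+1]]+1=2\{\{\ell+p+1\}\}-1$, which may be negative when $\{\{\ell+p+1\}\}<1/2$. However this is exactly the situation already handled in the $q\in W_1^{2p-1}$ branch of Theorem~\ref{Thm Convergence R} (there the leftover was $2\{\{\ell+p+1\}\}-1$ as well, no $\mu$ appears once $p\ge1$), so the estimates go through: the endpoint integrals $\int_0^{\pi}\sin^{2\{\{\ell+p+1\}\}-1}\theta\,d\theta$ converge because $\{\{\ell+p+1\}\}>0$. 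Thus everything reduces to the computations already carried out in Sections~\ref{Sect3}--\ref{Sect4}, and the statement follows. I would simply remark that the proof is "completely analogous to that of Theorem~\ref{Thm Convergence R}, using \eqref{R2 via K1} in place of \eqref{RviaK} and the smoothness class $W_2^{\ell+3/2+p-\varepsilon}(\mathbb{R})$ of $\tilde R^{(2)}$ in place of that of $\tilde R_p$," and leave the verbatim repetition to the reader, as the paper itself announces just before the proposition.
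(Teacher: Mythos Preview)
Your proposal is correct and follows exactly the route the paper itself indicates: the proposition is stated without proof, the paper having announced just before it that ``the scheme presented in Sections~\ref{Sect3}--\ref{Sect5} can be applied without significant changes for the derivative'' with the sole substitution $W_1^{2p-1}\to W_1^{2p+1}$, so your replay of Propositions~\ref{Prop R near pm x}--\ref{Prop R pointwise} and Theorem~\ref{Thm Convergence R} is precisely what is intended. One small slip worth correcting: in the analogue of \eqref{Estimate for k eq 0} the leftover $\sin\theta$-exponent is obtained by replacing $-\mu$ by $p$ in $1-2\mu$, giving $2p+1>0$ for every $p\ge 0$, so the endpoint-integrability concern you raise never actually arises (and note it is $\tilde R^{(2)}_p$, not $\tilde R^{(2)}$, that is the Fourier transform of $d(\omega)\mathcal R_{p+1,x}$, though you use the correct object thereafter).
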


Comparing \eqref{R2 via K1} with \eqref{RviaK} we may conclude from \eqref{KviaR}  that
\begin{equation}
K_1(x,t)=\frac{4\sqrt{\pi }}{\Gamma \left( \ell+\frac{3}{2}\right) }\frac{t^{\ell+1}}{\Gamma (n-\ell-1)} \left( -\frac{d}{2tdt}\right) ^{n}\int_{t}^{x}(s^{2}-t^{2})^{n-\ell-2}s\tilde R^{(2)}(x,s)ds,  \label{K1 via R2}
\end{equation}
here $n$ can be arbitrary integer satisfying $n>\ell+1$. And similarly to Section \ref{Sect4} we obtain the following result.

\begin{theorem}\label{Thm K1 repr}
Suppose $q\in W_1^{2p+1}[0,b]$ for some $p\in\mathbb{N}$. Then the following representation
\begin{equation}\label{K1 Jacobi}
\begin{split}
    K_1(x,t) &= \frac{2\sqrt\pi }{x^{2\ell+3}\Gamma(\ell+3/2)}\sum_{n=0}^\infty \frac{(-1)^n \tilde\gamma_n(x)\Gamma(\ell+2n+2)}{(2n)!}
    t^{\ell+1}P_n^{(\ell+1/2,0)}\left(1-\frac{2t^2}{x^2}\right)\\
    &=\sum_{n=0}^\infty  \frac{\gamma_n(x)}{x^{\ell+2}}t^{\ell+1}P_n^{(\ell+1/2,0)}\left(1-\frac{2t^2}{x^2}\right),
\end{split}
\end{equation}
is valid. Here we defined
\begin{equation}\label{gamma and tilde gamma}
    \gamma_n(x) = (-1)^{n} \frac{2\sqrt{\pi} \tilde \gamma_n(x)\Gamma(\ell+2n+2)}{x^{\ell+1}\Gamma(\ell+3/2)(2n)!}.
\end{equation}
The series converges absolutely and uniformly with respect to $t\in [0, x-\varepsilon]$ for any small $\varepsilon>0$ and for any fixed $x>0$ converges in $L_2(-x,x)$.

For the truncated series
\begin{equation}\label{K1N Jacobi}
    K_{1,N}(x,t):=\sum_{n=0}^N \frac{\gamma_n(x)}{x^{\ell+2}}t^{\ell+1}
    P_n^{(\ell+1/2,0)}\left(1-\frac{2t^2}{x^2}\right)
\end{equation}
the following estimate holds
\begin{equation}\label{K1 Jacobi Estimate}
    \left\|K_1(x,t) - K_{1,N}(x,t)\right\|_{L_2(0,x)}\le \frac{C}{N^{p}}
\end{equation}
for all $N$ satisfying $2N>\ell+p+1$.
\end{theorem}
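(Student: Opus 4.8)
The plan is to reproduce, essentially verbatim, the scheme of Section~\ref{Sect4}, with the substitutions $\tilde R\rightsquigarrow\tilde R^{(2)}$, $K\rightsquigarrow K_1$, $\tilde\beta_n\rightsquigarrow\tilde\gamma_n$, and the relation \eqref{KviaR} replaced by \eqref{K1 via R2}. First I would record the coefficient decay estimates for $\tilde\gamma_n$. Exactly as in \eqref{beta via cFourier}, the coefficients in \eqref{R2 Jacobi} satisfy $\tilde\gamma_n(x)=x\,c_{2n}(\ell+1;\tilde h)/\sqrt{h_{2n}}$ for the function $\tilde h(z)=\tilde R^{(2)}(x,xz)/(1-z^2)^{\ell+1}$, and combining \eqref{Estimate 1 for R2} with the bound $h_n\ge c/n$ one obtains, exactly as in Corollary~\ref{Cor DecayBetas}, the tail estimate $\sum_{n=N+1}^\infty |\tilde\gamma_n(x)|^2/n\le C/N^{2\ell+2p+2}$ for $2N>\ell+p+1$, and, keeping the leading term only, $|\tilde\gamma_n(x)|\le C/n^{\ell+p+1/2}$. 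Since $p\ge 1$, this is at least as strong as the bound $|\tilde\beta_n(x)|\le c/n^{\ell+3/2}$ that powered the proof of Theorem~\ref{Thm K repr} under $q\in AC[0,b]$; this is precisely where the hypothesis $q\in W_1^{2p+1}$ (two derivatives more than for the solution itself) does its work.

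Next I would insert \eqref{R2 Jacobi} into \eqref{K1 via R2}. Put $m=[\ell]$, $\lambda=\{\ell\}$ and take $n=m+3$ in \eqref{K1 via R2} (and $n=\ell+2$ when $\lambda=0$). The series \eqref{R2 Jacobi} converges uniformly on $[-x,x]$ by \eqref{Estimate 2 for R2}, and the factor $s(s^2-t^2)^{1-\lambda}$ is bounded, so the sum and the integral may be interchanged. The resulting inner integral $\int_t^x(s^2-t^2)^{1-\lambda}s(x^2-s^2)^{\ell+1}C_{2n}^{\ell+3/2}(s/x)\,ds$ is the very one evaluated in the proof of Theorem~\ref{Thm K repr} through formula (2.21.1.4) of \cite{Prudnikov}; after the reflection-formula simplification it produces the analogue of \eqref{Eq04} with $\tilde\beta_n$ replaced by $\tilde\gamma_n$.

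The main work, exactly as in Section~\ref{Sect4}, is to justify applying $(d/dt^2)^{m+3}$ termwise to that series. I would reuse the identity \eqref{GenJacobi s}: for $s\le m+2$ the $s$-th termwise derivative evaluated at $t=x/\sqrt2$ is a constant multiple of $\tilde\gamma_n(x)\,\Gamma(\ell+2n+2)\Gamma(n+s+\lambda-3/2)/\bigl(\Gamma(2n+1)\Gamma(n+\ell+3/2)\bigr)\,P^{(s-m-3,\,s+\lambda-5/2)}_{m+n+3-s}(0)$, and since the Jacobi values at $0$ are $O(n^{-1/2})$ \cite[Theorem 7.32.2]{Szego1959}, the Gamma quotient is $O(n^{s+\lambda-2})$ with $s+\lambda-2\le\ell$, and $|\tilde\gamma_n(x)|\le c/n^{\ell+3/2}$, the terms decay like $n^{-2}$, so each of these series converges absolutely. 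For $s=m+3$ formula \eqref{Eq05} exhibits the termwise derivative as a multiple of $\tilde\gamma_n(x)\Gamma(\ell+2n+2)/(2n)!\cdot P_n^{(\ell+1/2,0)}(1-2t^2/x^2)$, and uniform convergence on any $[\varepsilon,x-\varepsilon]$ follows from the dyadic estimate \eqref{Eq07} applied to $\sum|\tilde\gamma_n(x)|\,n^{\ell+1}/\sqrt n$ via the tail bound above with $p\ge1$. This yields \eqref{K1 Jacobi}; including the factor $t^{\ell+1}$ and using the weighted bound $t^{\ell+1}|P_n^{(\ell+1/2,0)}(1-2t^2/x^2)|\le c_\varepsilon/\sqrt n$ on $[0,x-\varepsilon]$ \cite[Theorem 7.5]{Suetin} upgrades this to uniform convergence on $[0,x-\varepsilon]$. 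The integer case $\lambda=0$ is disposed of, as in Theorem~\ref{Thm K repr}, through the identities \eqref{Weighted P via Neg P}--\eqref{Weighted P via Neg P 2}.

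Finally, the $L_2$ statements: by Lemma~\ref{Lemma Orthonormal Base} the functions $t^{\ell+1}P_n^{(\ell+1/2,0)}(1-2t^2/x^2)$ are pairwise orthogonal in $L_2(0,x)$, so up to a constant $\|K_1-K_{1,N}\|_{L_2(0,x)}^2=\sum_{n=N+1}^\infty|\tilde\gamma_n(x)|^2\Gamma^2(2n+\ell+2)/\bigl((4n+2\ell+3)\Gamma^2(2n+1)\bigr)\le c\sum_{n=N+1}^\infty|\tilde\gamma_n(x)|^2 n^{2\ell+1}$, which the dyadic argument \eqref{Eq08} (with the tail bound and parameter $p$) bounds by $C/N^{2p}$, giving convergence in $L_2(-x,x)$ and the estimate \eqref{K1 Jacobi Estimate}; the case $N=-1$ of the bound gives finiteness of the full sum. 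Since by the previous paragraph the series also converges pointwise to $K_1$ on $(0,x)$, its $L_2$ limit equals $K_1$ and the $\gamma_n$ from \eqref{gamma and tilde gamma} are precisely its Fourier coefficients with respect to \eqref{Orthonormal Base}. I expect the only genuine obstacle to be the bookkeeping in the termwise-differentiation step — verifying that $\tilde R^{(2)}$ under $q\in W_1^{2p+1}$ carries exactly the smoothness that $\tilde R$ carries under $q\in W_1^{2p-1}$, so that the coefficient decay required in the analogues of \eqref{Eq04a}--\eqref{Eq07} is available; everything else is a transcription of Section~\ref{Sect4}.
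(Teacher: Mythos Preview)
Your proposal is correct and coincides with the paper's own approach: the paper does not give a separate proof of Theorem~\ref{Thm K1 repr} but simply states that the scheme of Sections~\ref{Sect3}--\ref{Sect5} applies verbatim to the pair $K_1,\tilde R^{(2)}$ once the smoothness requirement is shifted from $q\in W_1^{2p-1}[0,b]$ to $q\in W_1^{2p+1}[0,b]$, leaving the details to the reader. You have correctly identified that this shift is exactly what guarantees for $\tilde\gamma_n$ the decay that $\tilde\beta_n$ enjoyed under $q\in AC[0,b]$, so that the termwise-differentiation and dyadic estimates of Theorem~\ref{Thm K repr} and Theorem~\ref{Thm Convergence K} go through unchanged.
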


And finally substituting \eqref{K1 Jacobi} into \eqref{u prime via transmut} we obtain the following result.
\begin{theorem}\label{thm Rep DerSol}
Let $q\in W_1^{2p+1}[0,b]$. Then the derivative of the regular solution $u(\omega,x)$ considered in Theorem \ref{thm Rep Sol} has the following representation
\begin{equation}\label{DerSol Repr}
    u'(\omega, x) = \omega^2 x j_{\ell-1}(\omega x) + \left(\frac{xQ(x)}2 - \ell\right)\omega j_\ell(\omega x) +
     \sum_{n=0}^\infty \gamma_n(x) j_{\ell+2n+1}(\omega x),
\end{equation}
where the coefficients $\gamma_n$ are related to the coefficients $\tilde \gamma_n$ by \eqref{gamma and tilde gamma}.
The series \eqref{DerSol Repr} converges absolutely and uniformly with respect to $\omega$ on any compact subset of the complex plane.

Denote by
\begin{equation}\label{uN prime}
u'_N(\omega, x) = \omega^2 x j_{\ell-1}(\omega x) + \left(\frac{xQ(x)}2 - \ell\right)\omega j_\ell(\omega x) +
     \sum_{n=0}^N \gamma_n(x) j_{\ell+2n+1}(\omega x),
\end{equation}
an approximate solution obtained by truncating the series in \eqref{DerSol Repr}. Then the following estimate holds uniformly for $\omega\in\mathbb{R}$
\begin{equation}\label{Estimate for DerSol}
    |u'(\omega, x) - u'_N(\omega, x)| \le \frac{c(x)}{N^p},\qquad 2N>\ell+p+2.
\end{equation}
\end{theorem}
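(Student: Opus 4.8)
The plan is to substitute the Fourier--Jacobi representation \eqref{K1 Jacobi} of the kernel $K_1$ into the transmutation formula \eqref{u prime via transmut} and evaluate the resulting integrals termwise via Lemma \ref{Lemma Bessel j}, exactly parallel to the proof of Theorem \ref{thm Rep Sol}. First I would recall that $b_\ell(\omega x) = \omega x j_\ell(\omega x)$ and use the standard differentiation identity for spherical Bessel functions (equivalently \cite[9.1.27]{Abramowitz} rewritten for $j_\ell$) to rewrite the ``free'' part $\omega b_\ell'(\omega x) + \tfrac{Q(x)}{2}b_\ell(\omega x)$ in the form $\omega^2 x j_{\ell-1}(\omega x) + \bigl(\tfrac{xQ(x)}{2}-\ell\bigr)\omega j_\ell(\omega x)$; this is the elementary computation $\tfrac{d}{dx}\bigl(\omega x j_\ell(\omega x)\bigr) = \omega\, j_\ell(\omega x) + \omega^2 x j_\ell'(\omega x)$ together with $z j_\ell'(z) = z j_{\ell-1}(z) - (\ell+1) j_\ell(z)$, collecting terms.

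Next, for the integral term, $L_2(0,x)$-convergence of the series \eqref{K1 Jacobi} (guaranteed by Theorem \ref{Thm K1 repr}) together with $b_\ell(\omega\,\cdot\,)\in L_2(0,x)$ justifies interchanging summation and integration, and then each integral
\[
\int_0^x \frac{\gamma_n(x)}{x^{\ell+2}}\, t^{\ell+1} P_n^{(\ell+1/2,0)}\!\left(1-\frac{2t^2}{x^2}\right)\omega t\, j_\ell(\omega t)\,dt
\]
is converted, using $j_\ell(z) = \sqrt{\pi/(2z)}\,J_{\ell+1/2}(z)$, into the form handled by Lemma \ref{Lemma Bessel j}; one obtains $\gamma_n(x)\, j_{\ell+2n+1}(\omega x)$ after substituting the definition \eqref{gamma and tilde gamma} of $\gamma_n$ in terms of $\tilde\gamma_n$ and tracking the constants $2\sqrt\pi/(\Gamma(\ell+3/2)x^{2\ell+3})$ and $(-1)^n\Gamma(\ell+2n+2)/(2n)!$, exactly as in the proof of Theorem \ref{thm Rep Sol}. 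This yields \eqref{DerSol Repr}. Absolute and uniform convergence on compact subsets of $\mathbb{C}$ follows from the uniform boundedness of $z j_{\ell+2n+1}(z)$ combined with the decay estimate for $\gamma_n$ (or equivalently $\tilde\gamma_n$) that comes from the analogue of Corollary \ref{Cor DecayBetas} for $\tilde R^{(2)}$, summed as in \eqref{Eq07}; one needs $q\in W_1^{2p+1}[0,b]$ here rather than $W_1^{2p-1}$ because $\tilde R^{(2)}$ loses one order of smoothness relative to $\tilde R$.

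For the error estimate \eqref{Estimate for DerSol}, I would write $u'_N(\omega,x) = \omega^2 x j_{\ell-1}(\omega x) + \bigl(\tfrac{xQ(x)}{2}-\ell\bigr)\omega j_\ell(\omega x) + \int_0^x K_{1,N}(x,t)\,b_\ell(\omega t)\,dt$ with $K_{1,N}$ from \eqref{K1N Jacobi}, so that
\[
|u'(\omega,x) - u'_N(\omega,x)| = \left|\int_0^x \bigl(K_1(x,t)-K_{1,N}(x,t)\bigr) b_\ell(\omega t)\,dt\right| \le \|K_1(x,\cdot)-K_{1,N}(x,\cdot)\|_{L_2(0,x)}\,\Bigl(\int_0^x |b_\ell(\omega t)|^2\,dt\Bigr)^{1/2},
\]
by Cauchy--Schwarz; the first factor is bounded by $C/N^p$ via \eqref{K1 Jacobi Estimate} and the second is bounded uniformly in $\omega\in\mathbb{R}$ since $|b_\ell(z)|\le c$ on the real line \cite[(9.2.1)]{Abramowitz}, giving the claim (the shift to $2N>\ell+p+2$ rather than $2N>\ell+p+1$ is just to accommodate the index shift coming from the $j_{\ell-1}$ term and the smoothness bookkeeping). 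The only genuinely nonroutine point is the first paragraph's algebraic rearrangement of the free term into the stated closed form and checking that no boundary contribution from differentiating under the integral sign in \eqref{u prime via transmut} has been dropped --- but that identity is already recorded as \eqref{u prime via transmut} with the Goursat condition \eqref{K Goursat} accounting precisely for the $\tfrac{Q(x)}{2}b_\ell$ term, so there is nothing to prove there; the main ``obstacle'' is really just careful constant-tracking, identical in structure to the solution case.
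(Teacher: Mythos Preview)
Your proposal is correct and matches the paper's approach exactly: the paper itself does not write out a proof for this theorem but simply states that ``the scheme presented in Sections \ref{Sect3}--\ref{Sect5} can be applied without significant changes for the derivative,'' which is precisely what you do --- substitute \eqref{K1 Jacobi} into \eqref{u prime via transmut}, evaluate termwise via Lemma \ref{Lemma Bessel j}, and bound the truncation error by Cauchy--Schwarz and \eqref{K1 Jacobi Estimate}. Your rewriting of the free term via $zj_\ell'(z)=zj_{\ell-1}(z)-(\ell+1)j_\ell(z)$ is the intended elementary step, and the only soft spot is your explanation of the shift to $2N>\ell+p+2$, which is more a bookkeeping convention in the paper than anything to do with the $j_{\ell-1}$ term.
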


\begin{remark}
The validity of representations \eqref{K1 Jacobi} and \eqref{DerSol Repr} can be established under a weaker condition on the potential $q$, at least under the condition $q\in AC[0,b]$. The proof can be done similarly to Sections \ref{Sect4}, \ref{Sect5} and Appendix \ref{AppB}. See also proof of Theorem 7.1 from \cite{KTC2017} for estimations of derivatives of regular solutions. We left the details for a separate work.
\end{remark}

\section{Recurrent formulas for the coefficients $\beta_n$ and $\gamma_n$}
\label{Sect7}
Note that the functions $\beta_n$ are twice differentiable and grow not faster than polynomially in $n$ (which can be seen from \eqref{R coeff1}, \eqref{beta via cFourier} and \eqref{beta and tilde beta}, see also \cite{KTC2017}). The same is true for $\beta_n'$ and $\beta_n''$. Taking into account the inequality $|j_n(x)|\le \sqrt\pi \bigl|\frac x2\bigr|^n \frac{1}{\Gamma(n+3/2)}$, $x\in\mathbb{R}$, one may verify that the series \eqref{Sol Repr} can be differentiated twice termwise.

So let us substitute \eqref{Sol Repr} into equation \eqref{PertBessel}. Similarly to \cite[Section 6]{KTC2017} we obtain that
\begin{equation}\label{Substitute 1}
    \begin{split}
      0 & = L[u(\omega, x)]-\omega^2 u(\omega,x) = q(x) b_\ell(\omega x) +  \\
        &\quad  +\sum_{n=0}^{\infty}\left[  \beta_{n}(x)\left(
j_{2n+\ell+1}(\omega x)\left(  q(x)-\frac{2n(2n+2\ell+1)}{x^2}\right)
-\frac{2\omega}{x}j_{2n+\ell+2}(\omega x)\right)  \right. \\
&\quad \left.   -\beta_{n}^{\prime\prime}(x)j_{2n+\ell+1}(\omega x)+2\beta_{n}^{\prime
}(x)\left(  \omega j_{2n+\ell+2}(\omega x)-\frac{2n+\ell+1}{x}j_{2n+\ell+1}(\omega x)\right)
\right]  .
    \end{split}
\end{equation}
After applying the formula
\begin{equation}
j_{2n+\ell+1}(\omega x)=\frac{\omega x}{4n+2\ell+3}\bigl(j_{2n+\ell}(\omega x)+j_{2n+\ell+2}(\omega
x)\bigr) \label{recusrive Bessel}%
\end{equation}
equality \eqref{Substitute 1} can be rewritten in the form
\begin{equation*}
    \sum_{n=0}^\infty \alpha_n(x) j_{2n+\ell}(\omega x) = 0,
\end{equation*}
where
\begin{equation}\label{sum alpha j}
\alpha_0(x) = q(x) + \frac{1}{2\ell+3}\left(-\beta_0''(x) - \frac{2\ell+2}{x}\beta_0'(x) + \beta_0(x)q(x)\right)
\end{equation}
and
\begin{align*}
\alpha_{n}(x)    :=&  \frac{1}{4n+2\ell+3}\left(
-\beta_{n}^{\prime\prime}(x)-\frac{2(2n+\ell+1)}{x}\beta_{n}^{\prime}(x)+\left(
q(x) - \frac{2n(2n+\ell+1)}{x^2}\right)  \beta_{n}(x)\right)
 \\
\displaybreak[2]
&  +\frac{1}{4n+2\ell-1}\left(  -\beta_{n-1}^{\prime\prime}(x)-\frac{2(
2n+\ell-1)  }{x}\beta_{n-1}^{\prime}(x)\right. \\
& \left.+\left(q(x) -  \frac{2(n-1)
(2n+2\ell-1)}{x^2}\right)  \beta
_{n-1}(x)\right)  +2\left(  \frac{\beta_{n-1}^{\prime}(x)}{x}-\frac{\beta_{n-1}%
(x)}{x^{2}}\right).
\end{align*}
Due to the orthogonality of the functions $j_{2n+\ell}(\omega x)$ on $(0,\infty)$ we obtain from \eqref{sum alpha j} that $\alpha_n(x)\equiv 0$, $n\in \mathbb{N}_0$, which can be written in the following form (c.f.\ \cite{KNT 2015} and \cite{KTC2017})
\begin{align*}
    \frac{1}{(2\ell+3)x^{\ell+1}}L\left[x^{\ell+1}\beta_0(x)\right] &= -q(x),\\
    \frac{1}{(4n+2\ell+3)x^{2n+\ell+1}}L\left[x^{2n+\ell+1}\beta_n(x)\right] &= -\frac{x^{2n+\ell}}{4n+2\ell-1}L\left[\frac{\beta_{n-1}(x)}{x^{2n+\ell}}\right],\qquad n\ge 1.
\end{align*}

Note that due to \eqref{R coeff1}, \eqref{beta via cFourier} each of the coefficients $\tilde \beta_k$ is a linear combination of the Fourier-Legendre coefficients of the integral kernel $\tilde R$ studied in \cite{KTC2017}. Hence, using (4.12) from \cite{KTC2017} and  \eqref{beta and tilde beta} one can see that the functions $\beta_n$ satisfy the following initial conditions
\[
\beta_n(0) = 0, \qquad n\in\mathbb{N}_0.
\]

Let $u_0$ denotes the regular solution of the equation $Lu=0$ normalized by the asymptotic condition
\[
u_0(x) \sim x^{\ell+1},\qquad x\to 0,
\]
and suppose that $u_0(x)\ne 0$ for all $x\in (0,b]$. If the regular solution of the equation $Lu=0$ vanish in $(0,b]$ it is always possible to perform a spectral shift such that new equation possesses a non-vanishing solution, see Appendix \ref{AppA}.
Then a solution $u$ of an equation
\begin{equation}\label{eq Lu h}
    Lu=h
\end{equation}
can be easily obtained using the P\'{o}lya factorization of $L$, $Lu = -\frac 1{u_0} \frac{d}{dx} u_0^2 \frac{d}{dx} \frac u{u_0}$. The function
\begin{equation}\label{u Polya}
    u(x) = -u_0(x) \int_0^x \frac 1{u_0^2(t)} \int_0^t u_0(s) h(s)\, ds\,dt
\end{equation}
is a solution of \eqref{eq Lu h}. Note also that the expression \eqref{u Polya} gives the unique solution of \eqref{eq Lu h} satisfying $u(x) = o(x^{\ell+1})$, $x\to 0$. And since the function $x^{2n+\ell+1} \beta_n(x) = o(x^{\ell+1})$, $x\to 0$, the expression \eqref{u Polya} can be used to reconstruct the functions $\beta_n$, $n\in\mathbb{N}$.

It is easy to verify that the function $\beta_0$ is given by the expression
\begin{equation}\label{beta0}
    \beta_0(x) = (2\ell+3) \left(\frac{u_0(x)}{x^{\ell+1}}-1\right).
\end{equation}

Hence one can start with \eqref{beta0} and define for $n\ge 1$
\begin{equation*}
    \beta_n(x) = \frac{4n+2\ell+3}{4n+2\ell-1}\frac{u_0(x)}{x^{2n+\ell+1}}\int_0^x \frac 1{u_0^2(t)} \int_0^t u_0(s) s^{4n+2\ell}L\left(\frac{\beta_{n-1}(s)}{s^{2n+\ell}}\right)\, ds\,dt.
\end{equation*}
To eliminate the first and second derivatives of $\beta_{n-1}$ resulting from the term $L\left(\frac{\beta_{n-1}(s)}{s^{2n-1}}\right)$, one may apply the integration by parts and obtain (similarly to \cite{KNT 2015}) the following recurrent formulas.
\begin{align}
    \eta_n(x) &= \int_0^x \bigl(tu_0'(t)+(2n+\ell)u_0(t)\bigr)t^{2n+\ell-1}\beta_{n-1}(t)\,dt,\label{etan}\\
    \theta_n(x) &= \int_0^x \frac{1}{u_0^2(t)}\bigl( \eta_n(t) - t^{2n+\ell} \beta_{n-1}(t) u_0(t)\bigr)\,dt,\qquad n\ge 1,\label{thetan}
\end{align}
and finally
\begin{equation}\label{beta_n alt}
    \beta_n(x) = -\frac{4n+2\ell+3}{4n+2\ell-1}\left[\beta_{n-1}(x) + \frac{2(4n+2\ell+1)u_0(x)\theta_n(x)}{x^{2n+\ell+1}}\right].
\end{equation}

To obtain the formulas for the coefficients $\gamma_n$ we proceed as follows. Differentiating \eqref{Sol Repr} we have that
\[
u^{\prime}(\omega,x)=\omega b_{\ell}^{\prime}(\omega x)+\sum_{n=0}^{\infty
}\left(  \beta_{n}^{\prime}(x)j_{2n+\ell+1}(\omega
x)-\omega\beta_{n}(x)j_{2n+\ell+2}(\omega x)+\tfrac{2n+\ell+1}{x}\beta_{n}(x)j_{2n+\ell+1}(\omega
x)\right)  .
\]
Comparing this expression with \eqref{DerSol Repr}, utilizing \eqref{recusrive Bessel} and rearranging terms, we arrive at the equality
\begin{equation*}
    \left(\frac{Q(x)}{2} +\frac{1}{2\ell+3} \left(\gamma_0(x) - \beta_0'(x) - \frac{\ell+1}{x}\beta_0(x)\right)\right)j_\ell(\omega x) +\sum_{n=1}^\infty
    \tilde \alpha_n(x) j_{2n+\ell}(\omega x) = 0,
\end{equation*}
where
\[
\begin{split}
\tilde \alpha_n(x) &:= \frac{1}{4n+2\ell+3}\left(\gamma_n(x) -\beta_n'(x)-\frac{2n+\ell+1}{x}\beta_n(x)\right) \\
&\quad + \frac{1}{4n+2\ell-1}\left(\gamma_{n-1}(x) - \beta_{n-1}'(x) + \frac{2n+\ell}{x}\beta_{n-1}(x)\right).
\end{split}
\]
Again, due to the orthogonality of the functions $j_{\ell+2n}(\omega x)$ on $(0,\infty)$ we obtain that the coefficients $\tilde \alpha_n$ are equal to 0, that is,
\begin{align*}
    \gamma_0(x) &= \beta_0'(x) + \frac{\ell+1}{x}\beta_0(x) -\frac{2\ell+3}2 Q(x),\\
    \gamma_n(x) &=\beta_n'(x)+\frac{2n+\ell+1}{x}\beta_n(x) -\frac{4n+2\ell+3}{4n+2\ell-1}\left(\gamma_{n-1}(x) - \beta_{n-1}'(x) + \frac{2n+\ell}{x}\beta_{n-1}(x)\right).
\end{align*}
Using \eqref{beta0} and \eqref{beta_n alt} these formulas can be written as
\begin{align}\label{gamma0}
    \gamma_0(x) &= (2\ell+3)\left(\frac{u_0'(x)}{x^{\ell+1}} - \frac{\ell+1}x -\frac{Q(x)}{2}\right),\\
    \gamma_n(x) &=-\frac{4n+2\ell+3}{4n+2\ell-1}\left[\gamma_{n-1}(x) + (4n+2\ell+1)\left( \frac{2u_0'(x)\theta_n(x)}{x^{2n+\ell+1}} + \frac{2\eta_n(x)}{u_0(x) x^{2n+\ell+1}} - \frac{\beta_{n-1}(x)}{x}\right)\right].\label{gamma_n alt}
\end{align}

\section{Numerical examples}
\label{Sect8}
The general scheme of application of the proposed representation \eqref{Sol Repr} and \eqref{DerSol Repr} for the solution of boundary value and spectral problems for equation \eqref{PertBessel} is basically the same as for the paper \cite{KTC2017}. One starts with computing a particular solution $u_0$ of the equation $Lu_0=0$, then computes the coefficients $\{\beta_n\}_{n=0}^{N_1}$ and $\{\gamma_n\}_{n=0}^{N_2}$ and obtains approximations for the regular solution and its derivative. The particular solution together with its derivative can be calculated using the SPPS representation \cite[Section 3]{CKT2013}. The assumption for the solution $u_0$ to be non-vanishing automatically holds if $q(x)\ge 0$, $x\in (0,b]$. For other cases one may need to apply the spectral shift as described in Appendix \ref{AppA}. The coefficients $\beta_n$ and $\gamma_n$ can be computed using \eqref{etan}--\eqref{beta_n alt} and \eqref{gamma_n alt}. To perform an indefinite integration numerically one may use a piecewise polynomial interpolation or approximation and integrate the resultant polynomial. For machine precision arithmetics we use the fifth degree polynomial interpolation (resulting in the 6 point Newton-Cottes rule), in previous papers we also used spline interpolation (much slower) or Clenshaw-Curtis rule (mainly for higher precision arithmetics). We refer the reader to \cite{KNT 2015} and \cite{KTC2017} for further details.

We would like to mention that the coefficients $\beta_n(x)$ and $\gamma_n(x)$ decay as $n\to\infty$, see \eqref{Estimate for one beta p} and \eqref{beta and tilde beta}, and decay faster then polynomially for smooth potentials $q$. However computation errors propagate through the recurrent formulas \eqref{beta_n alt} and \eqref{gamma_n alt}, meaning that after several dozens of coefficients the computation error will be comparable with the value of the coefficient itself. Equality \eqref{K Goursat} can be used to estimate an optimum number of the coefficients $\beta_n$. One can write \eqref{K Goursat} using the representation \eqref{K Jacobi} and \eqref{beta and tilde beta} as
\begin{equation}\label{Verification beta}
    \frac{xQ(x)}{2} = \sum_{n=0}^\infty (-1)^{n}\beta_n(x)
\end{equation}
and take as an optimal value for $N_1$ the index minimizing the discrepancy between the left hand side and the truncated sum. In order to present similar equality allowing one to estimate an optimal number of the coefficients $\gamma_n$, we conjecture that the the following formula holds.
\begin{equation}\label{K1 Goursat}
    K_1(x,x) = \frac{Q(x)}8 + \frac{q(x)}4 - \frac{(2\ell+1)}4 q(0).
\end{equation}
For the potentials possessing holomorphic extension onto the disk of radius $2xe\sqrt{1+2|\ell|}$ one can easily verify that \eqref{K1 Goursat} holds using formulas (2.3) and (4.6) from \cite{Chebli1994}. For the general case approximation by smooth potentials (e.g., polynomials) should work, however we do not want to enter into details in this paper.

On the base of \eqref{K1 Goursat} and \eqref{K1 Jacobi} we obtain the following equality
\begin{equation}\label{Verification gamma}
x\left(\frac{Q(x)}8 + \frac{q(x)}4 - \frac{(2\ell+1)}4 q(0)\right) = \sum_{n=0}^\infty (-1)^n\gamma_n(x),
\end{equation}
allowing one to estimate an optimal number $N_2$ of coefficients $\gamma_n$ to use.

All the computations were performed in machine precision using Matlab 2017. All the functions involved were represented by their values on an uniform mesh of 5001 points. The analytic expression was used only to obtain the value of the potential at the mesh points, all other computations were done numerically. We would like to emphasize that our aim was to show that even straightforward implementation of the proposed representations can deliver in almost no time results which are comparable or even superior to those produced by the best existing software packages.

\subsection{Example 1}
\label{Ex SP1}
Consider the following spectral problem
\begin{gather}
-u''+\left(\frac{\ell(\ell+1)}{x^2}+x^2\right) u=\omega^2 u, \quad 0\le x\le
\pi,\label{Ex1Eq}\\
u(\omega, \pi)=0.\label{Ex1BC}
\end{gather}
The regular solution of equation \eqref{Ex1Eq} can be written as
\[
u(\omega, x) = x^{\ell+1}e^{x^2/2}\,_1F_1\left(\frac{\omega^2+2\ell+3}{4}; \ell+\frac
32; -x^2\right)
\]
allowing one to compute with any precision arbitrary sets of eigenvalues
using, e.g., Wolfram Mathematica.

The value $\ell=3/2$ was considered in \cite[Example 2]{BoumenirChanane}, \cite[Example 7.3]{CKT2013} and \cite[Example 9.1]{KTC2017}. We compared the results with those obtained using \eqref{uN} with $N=12$. Exact eigenvalues together with the absolute errors of the approximate eigenvalues obtained using different methods are presented in Table \ref{Ex1Table1}. The proposed method is abbreviated as New NSBF. NSBF corresponds to the original representation from \cite{KTC2017} used with $N=100$.
As one can see from the results, the proposed method easily outperforms both the SPPS method and the previous NSBF method. The relative error of all found eigenvalues was less than $2.5\cdot 10^{-15}$, making it comparable with the best available software packages like \textsc{Matslise} \cite{LedouxVDaele}. It is worth to mention that the computation time for the proposed method was less than 0.25 sec on Intel i7-7600U equipped notebook.

\begin{table}[htb!]
\centering
\begin{tabular}{ccccccc}\hline
$n$ & $\omega_{n}$ (Exact/\textsc{Matslise})  & $\Delta \omega_n$ (New NSBF) & $\Delta \omega_n$ (NSBF) & $\Delta \omega_n$ (SPPS) &  $\Delta \omega_n$ (\cite{BoumenirChanane})\\\hline
1 &  $2.46294997397397$ & $4.4\cdot 10^{-16}$ & $1.4\cdot 10^{-14}$ & $2.7\cdot 10^{-13}$ &  $9.4\cdot 10^{-7}$\\
2 &  $3.28835292994256$ & $4.4\cdot 10^{-16}$ & $5.2\cdot 10^{-14}$ & $6.7\cdot 10^{-12}$ &  $1.4\cdot 10^{-5}$\\
3 &  $4.14986421874478$ & $1.8\cdot 10^{-15}$ & $1.2\cdot 10^{-13}$ & $8.2\cdot 10^{-13}$ &  $3.1\cdot 10^{-5}$\\
5 &  $6.00758145811600$ & $8.9\cdot 10^{-16}$ & $6.6\cdot 10^{-13}$ & $5.0\cdot 10^{-13}$ &  $4.1\cdot 10^{-6}$\\
7 &  $7.93973737689930$ & $2.0\cdot 10^{-14}$ & $2.9\cdot 10^{-13}$ & $6.0\cdot 10^{-13}$ & \\
10 & $10.8861250916173$ & $1.8\cdot 10^{-14}$ & $1.5\cdot 10^{-12}$ & $8.6\cdot 10^{-13}$ & \\
20 & $20.8202301908124$ & $3.6\cdot 10^{-15}$ & $1.4\cdot 10^{-11}$  & $9.6\cdot 10^{-14}$ &\\
50 & $50.7786768095149$ & $5.0\cdot 10^{-14}$ & $8.7\cdot 10^{-11}$ & $1.3\cdot 10^{-10}$ & \\
100 &$100.764442245651$ & $4.3\cdot 10^{-14}$ & $9.4\cdot 10^{-9}$ & $5.3\cdot 10^{-4}$ &\\
\hline
\end{tabular}
\caption{The eigenvalues for the spectral problem \eqref{Ex1Eq}, \eqref{Ex1BC} for $\ell=3/2$ compared to the results reported in \cite{CKT2013} and \cite{KTC2017}. Since eigenvalues produced by the \textsc{Matslise} package coincide with the exact eigenvalues to all reported digits, we present them in the combined column. $\Delta\omega_n$ denotes the absolute error of the computed eigenvalue $\omega_n$.}
\label{Ex1Table1}
\end{table}

We also compared the results provided by the
proposed method to those of \cite{KShT2018} and \cite{KTC2017} where other
methods based on the transmutation operators and Neumann series of Bessel functions were implemented. The following
values of $\ell$ were considered: $-0.5$, $0.5$, $1$, $5$, $10$ and $10.5$. We
present the results on Figure \ref{Ex1Fig1}. One can appreciate that the proposed method produces eigenvalues with non-deteriorating error and outperforms the other two methods. Also, for non-integer values of the parameter $\ell$, only few coefficients are necessary in comparison with hundreds required for the method from \cite{KTC2017}.

\begin{figure}[htb!]
\centering
\begin{tabular}{cc}
$\ell=1$ & $\ell=-0.5$ \\
\includegraphics[bb=0 0 216 144, width=3in,height=2in]
{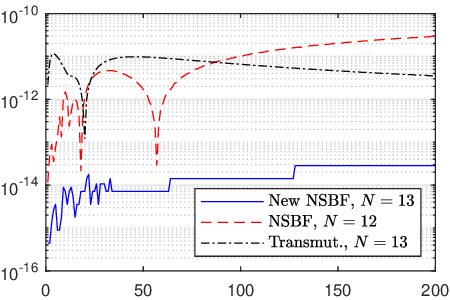} &
\includegraphics[bb=0 0 216 144, width=3in,height=2in]
{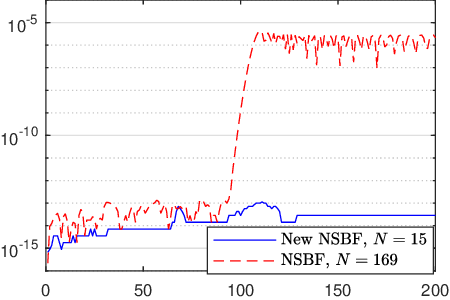} \\
$\ell=5$ & $\ell=0.5$ \\
\includegraphics[bb=0 0 216 144, width=3in,height=2in]
{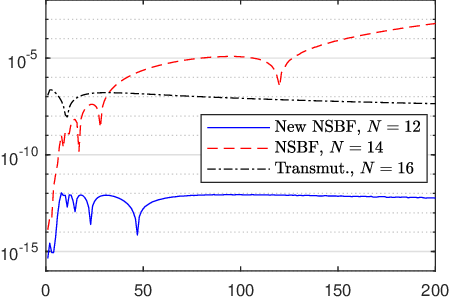} &
\includegraphics[bb=0 0 216 144, width=3in,height=2in]
{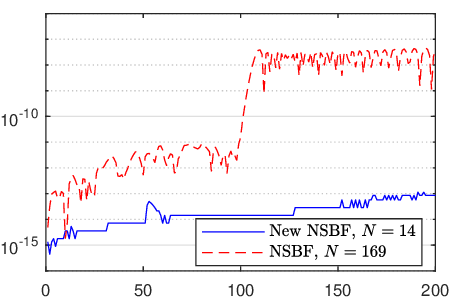} \\
$\ell=10$ & $\ell=10.5$ \\
\includegraphics[bb=0 0 216 144, width=3in,height=2in]
{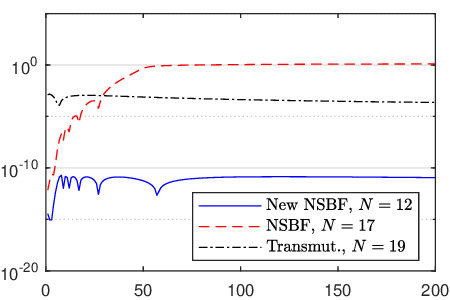} &
\includegraphics[bb=0 0 216 144, width=3in,height=2in]
{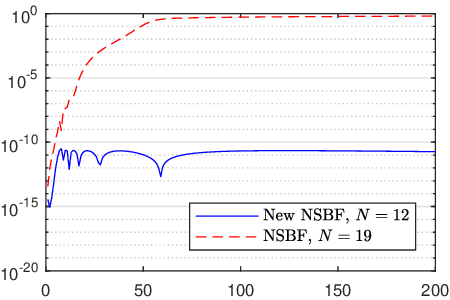}
\end{tabular}
\caption{Absolute errors of the first 200 eigenvalues for the spectral problem \eqref{Ex1Eq}, \eqref{Ex1BC} for different values of $\ell$. On the legends `New NSBF' refers to the proposed method, `NSBF' to the method from \cite{KTC2017} and `Transmut.' to the method from \cite{KShT2018}. The number $N$ indicates the truncation parameter used for calculation of the approximate solution.}
\label{Ex1Fig1}
\end{figure}

\subsection{Example 2}
\label{Ex SP2}
Consider the same equation as in Subsection \ref{Ex SP1} with a different boundary condition:
\begin{equation}\label{Ex2BC}
u'(\omega, \pi)=0.
\end{equation}

For this problem we compare the proposed method with the original NSBF representation \cite[Example 7.2]{KTC2017}. We considered three values of the parameter $\ell$, $-1/2$, $1/2$ and $1$ and computed 200 approximate eigenvalues.
Absolute errors of the obtained eigenvalues are presented on Figure \ref{Ex2Fig1}, on the left -- the proposed method, on the right -- the original NSBF representation. One can see the advantage of the proposed method, especially for non-integer values of $\ell$.

\begin{figure}[htb!]
\centering
\includegraphics[bb=0 0 216 173, width=3in,height=2.4in]
{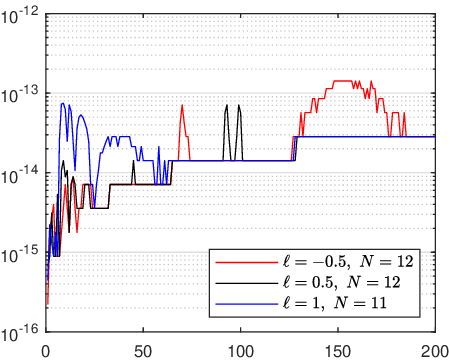}\ \ \
\includegraphics[bb=0 0 216 173, width=3in,height=2.4in]
{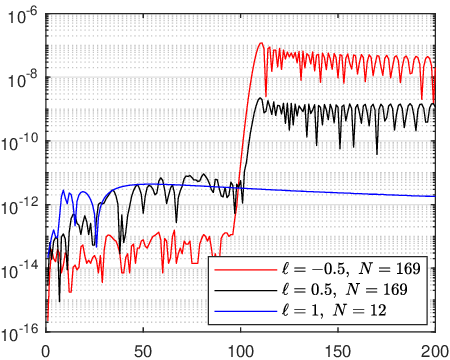}
\caption{Absolute errors of the first 200 eigenvalues for the spectral problem \eqref{Ex1Eq}, \eqref{Ex2BC} for different values of $\ell$. On the left: the proposed representation. On the right: the original NSBF representation from \cite{KTC2017}.
On the legends the truncation parameter $N$ used for calculation of the approximate solution is shown next to the value of the parameter $\ell$.}
\label{Ex2Fig1}
\end{figure}

\subsection{Decay of the coefficients $\beta_n$}
\label{Subsect 83}
In this example we would like to illustrate numerically how far or close are the estimates of Theorem \ref{Thm Convergence R} and Corollary \ref{Cor DecayBetas} from the computed decay of the coefficients $\beta_n$. Taking into account \eqref{beta and tilde beta}, estimate \eqref{Estimate for betas2} states that for $q\in W_1^{2p-1}[0,b]$ one has
\begin{equation}\label{Estim for beta}
\sum_{n=N+1}^\infty \frac{|\beta_n(x)|^2}{n^{2\ell+3}} \le \frac{C_1(x)}{N^{2\ell+2p+2}}.
\end{equation}
Assuming the coefficients $\beta_n$ are nicely behaved, one may suppose they should satisfy
\begin{equation}\label{Assume for beta}
    |\beta_n(x)|\le \frac{C_2(x)}{n^p},\qquad n\ge\frac{\ell+p}2+1.
\end{equation}
Note that it is more like a guess and not a proven inequality. The proven inequality \eqref{Estimate for one beta p} is worse by a factor of $\sqrt n$.

We considered the following potentials
\begin{equation}\label{potentials qm}
q_m(x) = \begin{cases} 1, & x\le \pi/2,\\
1+(x-\pi/2)^m, & x>\pi/2,\ m=0\ldots 5.
\end{cases}
\end{equation}
It is easy to see that $q_m\in W_1^m[0,\pi]$, but $q_m\not\in W_1^{m+1}[0,\pi]$. We took $\ell=1$ and for each of the potentials we computed the coefficients $\beta_k$, $k\le 100$. On the left in Figure \ref{Ex3Fig1} we present a log-log plot of the values $|\beta_k(\pi)|$ against $k$. Such type of plot allows one to reveal dependencies of the form $|\beta_k(\pi)|\sim c\cdot k^\alpha$. The exponent $\alpha$ corresponds to the slope of the linear part of the curve. We obtained the following values for the exponent $\alpha$ (the values are reported up to 2 decimal digits).
\begin{center}
\begin{tabular}{|c|c|c|c|c|c|c|}
\hline
$m$ & 0 & 1 & 2 & 3 & 4 & 5  \\
\hline
$\alpha$ & 1.45 & 3.38 & 3.39 & 5.30 & 5.34 & 7.31 \\
\hline
\end{tabular}
\end{center}
As one can see, the exponent indeed increases when the smoothness of the potential increases by 2, but the exponent also increases in the steps of 2 (approximately), not in the steps of 1 as predicted by \eqref{Assume for beta}. And even the decay rate for the discontinuous potential $q_0$ is better that those that the prediction \eqref{Assume for beta} gives for absolutely continuous potential, suggesting Theorem \ref{Thm K repr} may hold under weaker assumptions on $q$, \eqref{Cond on q} may be enough.

\begin{figure}[htb!]
\centering
\includegraphics[bb=0 0 216 173, width=3in,height=2.4in]
{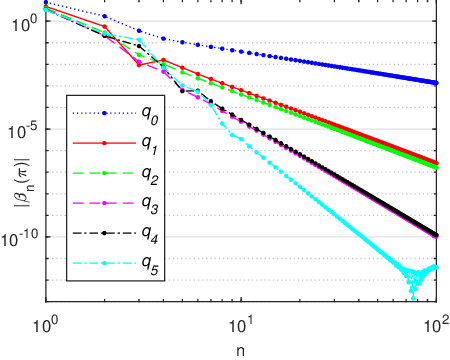}\ \ \
\includegraphics[bb=0 0 216 173, width=3in,height=2.4in]
{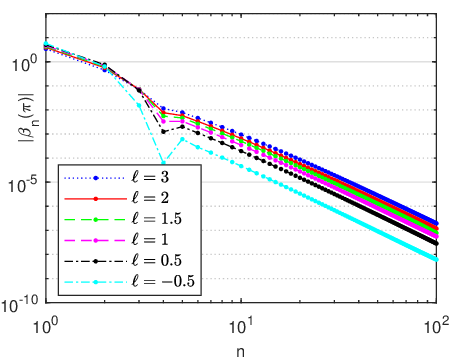}
\caption{Log-log plots of the values $|\beta_k(\pi)|$ against $k$. On the left: for different potentials $q_m$ given by \eqref{potentials qm}. On the right: for the potential $q(x) = \sqrt x$ but for different values of $\ell$.}
\label{Ex3Fig1}
\end{figure}

To verify the dependence of the decay of the coefficients $\beta_n$ on $\ell$, we considered the potential $q(x) = \sqrt x$ and computed the coefficients $\beta_k$, $k\le 100$ for different values of $\ell$. On the right in Figure \ref{Ex3Fig1} we present a log-log plot of the values $|\beta_k(\pi)|$ against $k$. As one can see, the exponents $\alpha$ in the dependencies $|\beta_k(\pi)|\sim c\cdot k^\alpha$ are essentially the same and do not depend on $\ell$.

\appendix
\section{Continuous dependence of regular solutions and integral kernels on potentials}
\label{AppB}
Many proofs related to transmutation operators are performed for smooth potentials and are followed by a phrase ``considering an approximation by smooth potentials and passing to the limit'' to show the validity for not so smooth potentials. However we are not aware of a rigorous result showing continuous dependence of the integral kernel on the potential for perturbed Bessel equations with potentials satisfying condition \eqref{Cond on q}. For that reason we decided to include the proof in the present paper.

Consider two equations
\begin{equation}\label{Eq Bessel ContDep}
    -y_i'' + \frac{\ell(\ell+1)}{x^2} y_i + q_i(x) y_i = \omega^2 y_i,\qquad i=1,2,\ x\in (0,b],
\end{equation}
where potentials $q_1$ and $q_2$ are complex valued functions satisfying condition \eqref{Cond on q}. Without loss of generality we may assume that the parameter $\mu$ is equal for both potentials.

Let $y_i(\omega, x)$, $i=1,2$ denote regular solutions of \eqref{Eq Bessel ContDep} satisfying the asymptotics \eqref{bl asympt}. We are going to estimate $\| y_1(\cdot, x) - y_2(\cdot, x)\|_{L_2(\mathbb{R})}$ for each fixed $x>0$.
Following the notations from \cite{KosSakhTesh2010} and according to \cite[Lemma 2.2]{KosSakhTesh2010}, solutions $y_{1,2}$ satisfy the integral equations
\[
y_i(\omega, x) = b_\ell(\omega x) + \int_0^x G_\ell(\omega, x, t) q_i(t) y_i(\omega, t)\,dt,
\]
where $G_\ell(\omega, x, t)$ is Green's function of the initial value problem. Hence their difference satisfies the equation
\begin{align}
\displaybreak[2]
    y_1(\omega, x) - y_2(\omega, x) &= \int_0^x G_\ell(\omega, x, t) \bigl(q_1(t) y_1(\omega, t) - q_2(t) y_2(\omega, t)\bigr)\,dt\nonumber\\
    \displaybreak[2]
\displaybreak[2]
    & = \int_0^x G_\ell(\omega, x, t) \bigl(q_1(t) - q_2(t)\bigr) y_1(\omega, t)\,dt \nonumber\\
    &\quad + \int_0^x G_\ell(\omega, x, t)q_2(t) \bigl( y_1(\omega, t) - y_2(\omega, t)\bigr)\,dt.\label{Eq for diff}
\end{align}
The following estimates hold \cite[(A.1), (A.2) and (2.18)]{KosSakhTesh2010} for all $\omega \in\mathbb{R}$
\begin{align*}
    |G_\ell(\omega, x, t)| &\le \begin{cases}
    C\left(\frac{x}{b+|\omega| x}\right)^{\ell+1} \left(\frac{b+|\omega| t}{t}\right)^\ell ,& \ell>-1/2,\\
    C\left(\frac{xt}{(b+|\omega| x)(b+|\omega| t)}\right)^{1/2} \left(1 - \log \frac tb\right) ,& \ell=-1/2,
    \end{cases}\\
    |y_1(\omega, x)| & \le C_{q_1}\left(\frac{|\omega|x}{b+|\omega| x}\right)^{\ell+1}\biggl(1+ \int_0^x \frac{s\tilde q_1(s)}{b+|\omega| s}\,ds\biggr) \le \tilde C_{q_1}\left(\frac{|\omega|x}{b+|\omega| x}\right)^{\ell+1},
\end{align*}
where $\tilde q_1$ is defined in \eqref{tilde q} and the constant $\tilde C_{q_1}$ can be bounded by
\[
\tilde C_{q_1} \le C\exp\biggl( \frac{C}{b} \int_0^x t\tilde q_1(t)\,dt\biggr).
\]
Suppose initially that $\ell>-1/2$. We obtain from \eqref{Eq for diff} that
\begin{equation}\label{Diseq for diff}
\begin{split}
    |y_1(\omega, x) - y_2(\omega, x)| &\le C\tilde C_{q_1} \left(\frac{|\omega|x}{b+|\omega| x}\right)^{\ell+1}\int_0^x \frac{s|q_1(s) - q_2(s)|}{b+|\omega|s}\,ds\\
    &\quad + C\left(\frac{x}{b+|\omega| x}\right)^{\ell+1} \int_0^x \left(\frac{b+|\omega| t}{t}\right)^\ell |q_2(t)|\cdot
    |y_1(\omega, t) - y_2(\omega, t)|\,dt.
\end{split}
\end{equation}
Let us divide \eqref{Diseq for diff} by $\bigl(\frac{x}{b+|\omega| x}\bigr)^{\ell+1}$ to obtain
\begin{equation}\label{Diseq for diff2}
\begin{split}
    \left(\frac{b+|\omega| x}{x}\right)^{\ell+1}&|y_1(\omega, x) - y_2(\omega, x)| \le C\tilde C_{q_1}|\omega|^{\ell+1}  \int_0^x \frac{s|q_1(s) - q_2(s)|}{b+|\omega|s}\,ds\\
    &\quad + C\int_0^x \frac{t|q_2(t)|}{b+|\omega| t}\cdot \left(\frac{b+|\omega| t}{t}\right)^{\ell+1}
    |y_1(\omega, t) - y_2(\omega, t)|\,dt.
\end{split}
\end{equation}
The first term after ``$\le$'' sign is a non-decreasing function. The function $\frac{|y_1(\omega, x) - y_2(\omega, x)|}{x^\ell}$ is continuous on $[0,b]$ due to asymptotic condition \eqref{bl asympt}. Hence applying Gr\"{o}nwall's inequality we obtain that
\[
\left(\frac{b+|\omega| x}{x}\right)^{\ell+1}|y_1(\omega, x) - y_2(\omega, x)| \le C\tilde C_{q_1}|\omega|^{\ell+1} \int_0^x \frac{s|q_1(s) - q_2(s)|}{b+|\omega|s}\,ds \cdot  \exp\biggl( C \int_0^x \frac{t|q_2(t)|}{b+|\omega| t}\,dt\biggr),
\]
or
\begin{equation*}
    |y_1(\omega, x) - y_2(\omega, x)| \le C\tilde C_{q_1}\left(\frac{|\omega|x}{b+|\omega| x}\right)^{\ell+1} \exp\biggl( \frac{C}{b} \int_0^x t|q_2(t)|\,dt\biggr)\int_0^x \frac{s|q_1(s) - q_2(s)|}{b+|\omega|s}\,ds.
\end{equation*}

The proof for the case $\ell = -1/2$ is completely similar, the additional factor $1-\log\frac tb$ results in the change of $q_1$ and $q_2$ by $\tilde q_1$ and $\tilde q_2$. Combining we obtain the following estimate
\begin{equation}\label{Final ineq for diff}
    |y_1(\omega, x) - y_2(\omega, x)| \le C\tilde C_{q_1}\left(\frac{|\omega|x}{b+|\omega| x}\right)^{\ell+1} \exp\biggl( \frac{C}{b} \int_0^x t|\tilde q_2(t)|\,dt\biggr)\int_0^x \frac{s|\tilde q_1(s) - \tilde q_2(s)|}{b+|\omega|s}\,ds.
\end{equation}

Let us introduce the notation
\[
C(q, x) := C\exp\biggl(\frac Cb \int_0^x t|\tilde q|\,dt\biggl).
\]
\begin{theorem}\label{Thm continuous dep}
Let the potentials $q_1$ and $q_2$ satisfy condition \eqref{Cond on q} with the same exponent $\mu$ and let $y_{1,2}$ be regular solutions of \eqref{Eq Bessel ContDep} satisfying asymptotics \eqref{bl asympt}. Then for each $x\in (0,b]$
\[
\|y_1(\cdot,x) - y_2(\cdot, x)\|_{L_2(\mathbb{R})} \le c_\mu C(q_1,x) C(q_2,x) \int_0^x t^\mu |\tilde q_1(t) - \tilde q_2(t)|\,dt,
\]
where the constant $c_\mu$ does not depend on the potentials.
\end{theorem}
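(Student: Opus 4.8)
The heavy lifting for this theorem is already done: estimate \eqref{Final ineq for diff}, obtained via the Green's-function bounds of \cite{KosSakhTesh2010} and Gr\"onwall's inequality, contains all the analytic content, and what remains is a short functional-analytic passage from a family of pointwise bounds in $\omega$ to a bound on the $L_2(\mathbb{R})$-norm in $\omega$. First I would rewrite \eqref{Final ineq for diff}, using $\tilde C_{q_1}\le C\exp\bigl(\tfrac Cb\int_0^x t|\tilde q_1(t)|\,dt\bigr)$, in the form
\[
|y_1(\omega,x)-y_2(\omega,x)|\le C(q_1,x)\,C(q_2,x)\,F(\omega),\qquad F(\omega):=\Bigl(\tfrac{|\omega|x}{b+|\omega|x}\Bigr)^{\ell+1}\int_0^x\frac{s\,g(s)}{b+|\omega|s}\,ds,
\]
where $g:=|\tilde q_1-\tilde q_2|$, and then take the $L_2(\mathbb{R})$-norm in $\omega$, pulling the $\omega$-independent factor $C(q_1,x)C(q_2,x)$ out of the norm (note that $\int_0^x t|\tilde q_i(t)|\,dt\le x^{1-\mu}\int_0^x t^\mu|\tilde q_i(t)|\,dt<\infty$ under \eqref{Cond on q}, so $C(q_i,x)$ is finite). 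Thus everything reduces to the purely computational estimate $\|F\|_{L_2(\mathbb{R})}\le c_\mu\int_0^x s^\mu g(s)\,ds$.

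To prove this I would apply Minkowski's integral inequality to move the $L_2$-norm inside the $s$-integral, obtaining $\|F\|_{L_2(\mathbb{R})}\le\int_0^x s\,g(s)\,\Phi(s)\,ds$ with $\Phi(s):=\bigl\|\bigl(\tfrac{|\omega|x}{b+|\omega|x}\bigr)^{\ell+1}(b+|\omega|s)^{-1}\bigr\|_{L_2(\mathbb{R})}$. Since $\ell\ge-1/2$ the exponent $2\ell+2$ is positive and $\bigl(\tfrac{|\omega|x}{b+|\omega|x}\bigr)^{2\ell+2}\le1$, hence
\[
\Phi(s)^2\le\int_{\mathbb{R}}\frac{d\omega}{(b+|\omega|s)^2}=\frac2{bs},
\]
the $\omega$-integral converging at infinity through the factor $(b+|\omega|s)^{-2}$ and at the origin by boundedness. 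Therefore $\|F\|_{L_2(\mathbb{R})}\le\sqrt{2/b}\int_0^x s^{1/2}g(s)\,ds$, and since $\mu<1/2$ and $s\le x$ we may write $s^{1/2}=s^\mu s^{1/2-\mu}\le x^{1/2-\mu}s^\mu$, so that $\|F\|_{L_2(\mathbb{R})}\le\sqrt{2/b}\,x^{1/2-\mu}\int_0^x s^\mu g(s)\,ds$. Collecting the constants into $c_\mu:=\sqrt{2/b}\,x^{1/2-\mu}$ (which depends on $x$, $b$ and $\mu$, but not on the potentials) gives the claimed inequality.

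There is no serious obstacle in the theorem itself; the only point demanding attention is arranging the passage to the $L_2$-norm in $\omega$ so that the weight in $s$ that emerges is exactly the weight $s^\mu$ from hypothesis \eqref{Cond on q} under which $\int_0^x s^\mu|\tilde q_i(s)|\,ds<\infty$, and it is here that the condition $\mu<1/2$ is used (together with $s\le x\le b$). An alternative route avoiding Minkowski's inequality is to bound $\bigl(\int_0^x\tfrac{s\,g(s)}{b+|\omega|s}\,ds\bigr)^2$ via the Cauchy--Schwarz inequality with weight $s^\mu$, i.e.\ $\bigl(\int_0^x\tfrac{s\,g}{b+|\omega|s}\,ds\bigr)^2\le\bigl(\int_0^x s^\mu g\,ds\bigr)\bigl(\int_0^x\tfrac{s^{2-\mu}g}{(b+|\omega|s)^2}\,ds\bigr)$, then to integrate in $\omega$, use Fubini, and apply the same one-dimensional bound $\int_{\mathbb R}(b+|\omega|s)^{-2}\,d\omega=2/(bs)$; this yields $\|F\|_{L_2(\mathbb{R})}^2\le\tfrac cb\bigl(\int_0^x s^\mu g(s)\,ds\bigr)\bigl(\int_0^x s^{1-\mu}g(s)\,ds\bigr)\le\tfrac{c\,x^{1-2\mu}}b\bigl(\int_0^x s^\mu g(s)\,ds\bigr)^2$, the same conclusion.
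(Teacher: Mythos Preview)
Your argument is correct. Starting from \eqref{Final ineq for diff} you pass to the $L_2(\mathbb R)$-norm via Minkowski's integral inequality, evaluate the elementary integral $\int_{\mathbb R}(b+|\omega|s)^{-2}\,d\omega=2/(bs)$, and then convert the resulting weight $s^{1/2}$ into $s^\mu$ using $s^{1/2}\le x^{1/2-\mu}s^\mu$ (valid since $\mu<1/2$ and $s\le x$). This is clean and complete; note also that since $x\le b$ one may replace $x^{1/2-\mu}$ by $b^{1/2-\mu}$ if a constant independent of $x$ is desired.

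The paper proceeds differently: instead of Minkowski, it first extracts the factor $\int_0^x t^\mu|\tilde q_1-\tilde q_2|\,dt$ directly from the pointwise bound by writing $\frac{s}{b+|\omega|s}=s^\mu\bigl(\frac{s}{b+|\omega|s}\bigr)^{1-\mu}\bigl(\frac{1}{b+|\omega|s}\bigr)^\mu$ and bounding each piece, obtaining $|y_1(\omega,x)-y_2(\omega,x)|\lesssim |\omega|^{-(1-\mu)}\int_0^x t^\mu g(t)\,dt$ for $|\omega|>1$ and a uniform bound for $|\omega|\le 1$; the $L_2$-norm is then computed from this pointwise decay (here $2(1-\mu)>1$ is exactly the condition $\mu<1/2$). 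Your route avoids the case split and the exponent bookkeeping, while the paper's route makes the pointwise decay rate $|\omega|^{-(1-\mu)}$ explicit, which can be useful elsewhere. Both arguments are short and rely on the same analytic input \eqref{Final ineq for diff}.
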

\begin{proof}
It follows from \eqref{Final ineq for diff} that for $|\mu|>1$
\[
\begin{split}
|y_1(\omega, x) - y_2(\omega, x)|&\le C(q_1,x) C(q_2,x) \int_0^x t^\mu |\tilde q_1(t) - \tilde q_2(t)| \cdot \left(\frac{t}{b+|\omega|t}\right)^{1-\mu} \left(\frac{1}{b+|\omega|t}\right)^\mu dt\\
&\le C(q_1,x) C(q_2,x) \frac{1}{|\omega|^{1-\mu}}\frac{1}{ b^\mu} \int_0^x t^\mu |\tilde q_1(t) - \tilde q_2(t)|\,dt
\end{split}
\]
and for $|\mu|\le 1$
\[
|y_1(\omega, x) - y_2(\omega, x)|\le C(q_1,x) C(q_2,x) \left(\frac{x}{b}\right)^{1-\mu}\frac{1}{ b^\mu} \int_0^x t^\mu |\tilde q_1(t) - \tilde q_2(t)|\,dt.
\]
Combining the last two inequalities the statement follows.
\end{proof}

\begin{corollary}\label{Corr continuous dep K}
Let the potentials $q_1$ and $q_2$ satisfy condition \eqref{Cond on q} with the same exponent $\mu$ and let $K_{1,2}$ be corresponding integral kernels of the transmutation operators. Then for each $x\in (0,b]$
\[
\|K_1(x,\cdot) - K_2(x,\cdot)\|_{L_2(0,b)} \le \frac{c_\mu}{2} C(q_1,x) C(q_2,x) \int_0^x t^\mu |\tilde q_1(t) - \tilde q_2(t)|\,dt.
\]
\end{corollary}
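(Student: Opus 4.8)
The plan is to reduce Corollary~\ref{Corr continuous dep K} to Theorem~\ref{Thm continuous dep} by establishing that, for each fixed $x>0$, the map $K(x,\cdot)\mapsto\bigl(y(\cdot,x)-b_\ell(\cdot,x)\bigr)$ is, up to a universal multiplicative constant, an isometry from $L_2(0,x)$ into $L_2(\mathbb{R})$. First I would recast the transmutation identity \eqref{VolkTransmute} as a Hankel transform. Using $b_\ell(\omega t)=\omega t\,j_\ell(\omega t)=\sqrt{\tfrac{\pi\omega t}{2}}\,J_{\ell+1/2}(\omega t)$, for $i=1,2$ one has
\[
y_i(\omega,x)-b_\ell(\omega x)=\int_0^x K_i(x,t)\,b_\ell(\omega t)\,dt=\sqrt{\tfrac{\pi\omega}{2}}\int_0^\infty f_i(t)\,J_{\ell+1/2}(\omega t)\,t\,dt,
\]
where $f_i(t):=K_i(x,t)/\sqrt{t}$ for $0<t<x$ and $f_i(t):=0$ for $t\ge x$. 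By condition \eqref{Cond on q} the kernel $K_i(x,\cdot)$ belongs to $L_2(0,x)$, hence $f_i\in L_2\bigl((0,\infty),t\,dt\bigr)$ with $\|f_i\|_{L_2((0,\infty),t\,dt)}^2=\|K_i(x,\cdot)\|_{L_2(0,x)}^2$, and the last integral equals $\sqrt{\pi\omega/2}$ times the Hankel transform $\mathcal{H}_{\ell+1/2}f_i$ of order $\ell+1/2\ge0$.

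Next I would invoke the classical Parseval identity for the Hankel transform, $\int_0^\infty|\mathcal{H}_{\ell+1/2}g(\omega)|^2\,\omega\,d\omega=\int_0^\infty|g(t)|^2\,t\,dt$, applied to $g=f_1-f_2$; since $\mathcal{H}_{\ell+1/2}(f_1-f_2)(\omega)=\sqrt{2/(\pi\omega)}\,\bigl(y_1(\omega,x)-y_2(\omega,x)\bigr)$, this gives
\[
\int_0^\infty\bigl|y_1(\omega,x)-y_2(\omega,x)\bigr|^2\,d\omega=\tfrac{\pi}{2}\,\|K_1(x,\cdot)-K_2(x,\cdot)\|_{L_2(0,x)}^2.
\]
The functions $y_i(\cdot,x)-b_\ell(\cdot,x)$ are even in $\omega$ (by the Mehler representation \eqref{Mehler representation}) and lie in $L_2(\mathbb{R})$ (by \eqref{SolAsymptot1} together with $\mu<1/2$), so the left-hand side is $\tfrac12\|y_1(\cdot,x)-y_2(\cdot,x)\|_{L_2(\mathbb{R})}^2$, whence
\[
\|K_1(x,\cdot)-K_2(x,\cdot)\|_{L_2(0,x)}=\tfrac{1}{\sqrt{\pi}}\,\|y_1(\cdot,x)-y_2(\cdot,x)\|_{L_2(\mathbb{R})}.
\]
Because $K_i(x,\cdot)$ vanishes on $(x,b)$, the $L_2(0,b)$ norm of the statement coincides with the $L_2(0,x)$ norm; combining the last identity with Theorem~\ref{Thm continuous dep} produces the asserted estimate, with the universal factor $\pi^{-1/2}$ absorbed into the constant $c_\mu$.

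The step I expect to require the most care is the rigorous justification of the Hankel--Parseval argument rather than any nontrivial estimate: one must verify that $f_i\in L_2\bigl((0,\infty),t\,dt\bigr)$ — which is exactly the $L_2$-membership of the transmutation kernel recalled after \eqref{Cond on q} — that the pointwise identity $\mathcal{H}_{\ell+1/2}(f_1-f_2)(\omega)=\sqrt{2/(\pi\omega)}\,(y_1-y_2)(\omega,x)$ holds for almost every $\omega$, and that $y_i(\cdot,x)-b_\ell(\cdot,x)$ is even and square-integrable on $\mathbb{R}$, so that the passage between $\int_0^\infty$ and $\int_{\mathbb{R}}$ is legitimate; all three facts follow from the material collected in Section~\ref{Sect2}. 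An alternative route would pass through the Mehler representation \eqref{Mehler representation}, Plancherel for the Fourier transform and the relation \eqref{RviaK} between $\tilde R$ and $K$, but since the operators in \eqref{RviaK}--\eqref{KviaR} are fractional-integral (Sonine-type) transformations that do not preserve $L_2$ norms, that route is considerably less direct.
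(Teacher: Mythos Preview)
Your proposal is correct and follows essentially the same approach as the paper's own proof: both exploit that $K_i(x,\cdot)$ and $y_i(\cdot,x)-b_\ell(\cdot\,x)$ are related by a Hankel transform of order $\ell+\tfrac12$, invoke Parseval's identity for that transform, and then apply Theorem~\ref{Thm continuous dep}. Your write-up is in fact more explicit about the normalizations and the passage between $L_2(0,\infty)$ and $L_2(\mathbb{R})$ than the paper's two-line argument.
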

\begin{proof}
Recall that for each fixed $x$ the integral kernels $K_1$ and $K_2$ are the Hankel transforms of the solutions $y_1$ and $y_2$ from Theorem \ref{Thm continuous dep} \cite[Theorem 2.4]{Sta2}. Hence
\[
K_1(x,t) - K_2(x,t) = \int_0^\infty \bigl(y_1(\omega, x) - y_2(\omega, x)\bigr) \sqrt{\omega t} J_{\ell+1/2}(\omega t)\,dt.
\]
Now the statement follows from Parseval's equality for the Hankel transform.
\end{proof}

Consider the function
\[
\nu(x) = \begin{cases}
x^\mu, & \ell>-1/2,\\
x^\mu (1-\log\frac xb), & \ell= -1/2.
\end{cases}
\]
Then potential $q$ satisfies condition \eqref{Cond on q} if and only if $q\in L_1\bigl((0,b),\nu(x)\,dx\bigr)$.

\begin{corollary}\label{Corr continuity K}
Let the potential $q$ satisfy condition \eqref{Cond on q} with a parameter $\mu$ and let $\{q_n\}_{n=0}^\infty\subset L_1\bigl((0,b),\nu(x)\,dx\bigr)$ be a sequence of potentials such that
\begin{equation}\label{qn to q}
q_n \overset{L_1((0,b),\nu(x)\,dx)}{\longrightarrow} q,\qquad n\to\infty.
\end{equation}
Let $K$, $\tilde R$ and $K_n$, $\tilde R_n$ be corresponding integral kernels. Then for any $x>0$
\[
K_n(x,\cdot) \overset{L_2(0,x)}{\longrightarrow} K(x,\cdot)\qquad\text{and}\qquad R_n(x,\cdot) \overset{L_2(-x,x)}{\longrightarrow} R(x,\cdot),\quad n\to\infty.
\]
\end{corollary}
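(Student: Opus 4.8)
The plan is to reduce both convergences to Corollary~\ref{Corr continuous dep K}, after observing that the hypothesis \eqref{qn to q} controls exactly the two quantities entering the estimate there. Write $\delta_n := \int_0^x t^\mu |\tilde q_n(t)-\tilde q(t)|\,dt$. By the definitions \eqref{tilde q} of $\tilde q$ and of $\nu$ one has $t^\mu|\tilde q_n(t)-\tilde q(t)| = \nu(t)|q_n(t)-q(t)|$ for $t\in(0,b)$, in both cases $\ell>-1/2$ and $\ell=-1/2$, so $\delta_n \le \|q_n-q\|_{L_1((0,b),\nu(x)\,dx)}\to 0$. Moreover, since $\mu<1$ and $1-\log(t/b)\ge 1$ we have $t\le b^{1-\mu}t^\mu\le b^{1-\mu}\nu(t)$ on $(0,b)$, hence $\int_0^x t|\tilde q_n(t)|\,dt \le b^{1-\mu}\bigl(\|q\|_{L_1((0,b),\nu(x)\,dx)}+\|q_n-q\|_{L_1((0,b),\nu(x)\,dx)}\bigr)$ is bounded uniformly in $n$, so $C(q_n,x)\le M$ for some $M$ independent of $n$.

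Now Corollary~\ref{Corr continuous dep K}, applied with $q_1=q_n$ and $q_2=q$, gives
\[
\|K_n(x,\cdot)-K(x,\cdot)\|_{L_2(0,b)}\le \tfrac{c_\mu}{2}\,C(q_n,x)\,C(q,x)\,\delta_n\le \tfrac{c_\mu}{2}M\,C(q,x)\,\delta_n\longrightarrow 0 .
\]
Since $K_n(x,\cdot)$ and $K(x,\cdot)$ vanish on $(x,b)$, this is precisely the claimed convergence $K_n(x,\cdot)\to K(x,\cdot)$ in $L_2(0,x)$.

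For $\tilde R$ I would use \eqref{RviaK}, which represents $\tilde R_n(x,\cdot)-\tilde R(x,\cdot)$ as the image of $K_n(x,\cdot)-K(x,\cdot)$ under the Erd\'elyi--Kober--type operator $(\mathcal I_\ell g)(s)=\frac{\Gamma(\ell+3/2)}{\sqrt\pi\,\Gamma(\ell+1)}\int_s^x g(t)\,t^{-\ell}(t^2-s^2)^\ell\,dt$ on $(0,x)$. If $\mathcal I_\ell$ is bounded on $L_2(0,x)$, then $\|\tilde R_n(x,\cdot)-\tilde R(x,\cdot)\|_{L_2(0,x)}\le \|\mathcal I_\ell\|\,\|K_n(x,\cdot)-K(x,\cdot)\|_{L_2(0,x)}\to 0$, and since $\tilde R_n$, $\tilde R$ are even in $t$ this yields $\|\tilde R_n(x,\cdot)-\tilde R(x,\cdot)\|_{L_2(-x,x)}^2=2\|\tilde R_n(x,\cdot)-\tilde R(x,\cdot)\|_{L_2(0,x)}^2\to 0$, equivalently for $R_n=2\tilde R_n$. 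For $\ell>-1/2$ the kernel $t^{-\ell}(t^2-s^2)^\ell\chi_{\{0<s<t<x\}}$ is square integrable on $(0,x)^2$ (one checks $\int_0^x\!\int_s^x t^{-2\ell}(t^2-s^2)^{2\ell}\,dt\,ds<\infty$ using $t\le t+s\le 2t$), so $\mathcal I_\ell$ is Hilbert--Schmidt. The one delicate point, which I expect to be the main obstacle, is the endpoint $\ell=-1/2$: there the Hilbert--Schmidt norm diverges and one must instead invoke Schur's test with the weight $w(t)=t^{-1/2}$, using $\int_s^x(t^2-s^2)^{-1/2}\,dt=\log\!\bigl((x+\sqrt{x^2-s^2})/s\bigr)$ and the substitution $s=t\sigma$ to verify the two Schur bounds, which shows $\mathcal I_{-1/2}$ is bounded on $L_2(0,x)$. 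An alternative route for $\tilde R$, avoiding $\mathcal I_\ell$ altogether, is to mimic the proof of Corollary~\ref{Corr continuous dep K}: by \eqref{Mehler representation} the function $\tilde R_n(x,\cdot)-\tilde R(x,\cdot)$ is the inverse Fourier transform of $d_\ell(\omega)\bigl(u_n(\omega,x)-u(\omega,x)\bigr)$, so Plancherel's identity combined with the pointwise bound \eqref{Final ineq for diff}, in which the factor $|\omega|^{\ell+1}$ cancels $d_\ell(\omega)$, reduces the claim to the $\omega$-integrability of $\bigl(x/(b+|\omega|x)\bigr)^{2\ell+2}$ against the square of $\int_0^x\!\frac{s|\tilde q_n-\tilde q|}{b+|\omega|s}\,ds$, which is again bounded by a constant times $\delta_n^2\to 0$ (the endpoint $\ell=-1/2$ being handled by exploiting the extra decay in $\omega$ furnished by the potential-difference integral).
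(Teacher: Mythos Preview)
Your argument for $K_n\to K$ is exactly the paper's: both invoke Corollary~\ref{Corr continuous dep K} after observing that \eqref{qn to q} forces $\delta_n\to 0$ and bounds $C(q_n,x)$ uniformly.

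For $\tilde R_n\to \tilde R$ you take a genuinely different route. The paper does not use \eqref{RviaK} at all; instead it returns to the solutions $\tilde y=d_\ell(\omega)u$, divides the pointwise bound \eqref{Final ineq for diff} by $\omega^{\ell+1}$, and then invokes Parseval for the Fourier transform exactly as in the proof of Corollary~\ref{Corr continuous dep K} (with the Hankel transform replaced by the ordinary Fourier transform). Your primary approach---showing that the Erd\'elyi--Kober-type operator $\mathcal I_\ell$ in \eqref{RviaK} is bounded on $L_2(0,x)$ and then simply pushing the already-proved $L_2$-convergence $K_n\to K$ through it---is correct: the Hilbert--Schmidt computation for $\ell>-1/2$ and the Schur test with weight $w(t)=t^{-1/2}$ for $\ell=-1/2$ both go through as you outline (for Schur, $s^{1/2}\log\bigl((x+\sqrt{x^2-s^2})/s\bigr)$ is bounded on $(0,x]$, and the substitution $s=t\sigma$ makes the dual integral a finite Beta value). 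This route is arguably cleaner, since it reuses the $K$-convergence rather than redoing the solution estimates, and it yields the $L_2$-boundedness of $\mathcal I_\ell$ as a byproduct. Your ``alternative route'' via \eqref{Mehler representation} and Plancherel is precisely the paper's argument; your remark that the endpoint $\ell=-1/2$ needs the extra $|\omega|^{-(1-\mu)}$ decay from the potential-difference integral is the right observation, and is handled by the same splitting as in the proof of Theorem~\ref{Thm continuous dep}.
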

\begin{proof}
It follows from \eqref{qn to q} that the quantities $C(q_n,x)$ are uniformly bounded by $2C(q,x)$ for all $n\ge n_0$. Hence the convergence
$K_n(x,\cdot) \to K(x,\cdot)$ follows immediately from Corollary \ref{Corr continuous dep K}.

Recall that the integral kernel $R$ is the Fourier transform of the regular solution $\tilde y$ satisfying the asymptotics $\tilde y(\omega, x) \sim x^{\ell+1}$ at 0. Dividing \eqref{Final ineq for diff} by $\omega^{\ell+1}$ we obtain that
\[
|\tilde y(\omega, x) - \tilde y_n(\omega,x)| \le C(q,x)C(q_n,x) \left(\frac{x}{b+|\omega|x}\right)^{\ell+1}\int_0^x \frac{s|\tilde q_1(s) - \tilde q_2(s)|}{b+|\omega|s}\,ds.
\]
Using the last inequality one can prove that $R_n(x,\cdot) \to R(x,\cdot)$ following the proof for the kernels $K$ and $K_n$ with minimal changes.
\end{proof}

\section{On existence of non-vanishing particular solution}\label{AppA}
Problem of existence of a non-vanishing particular solution of a differential equation naturally arises in the construction of coefficients for both SPPS \cite{KravchenkoPorter, CKT2013} and Neumann series of Bessel functions \cite{KNT 2015, KTC2017} representations for solutions. While for a non-singular case it is always possible to choose such linear combination of linearly independent particular solutions that it will not vanish on the whole interval of interest (see \cite[Remark 5]{KravchenkoPorter}, see also \cite{Camporesi et al 2011}), a perturbed Bessel equation possesses only one (up to a multiplicative constant) regular solution which either vanishes at some point or not. In this appendix we show that at least it is always possible to perform such spectral shift, i.e., consider an equation
\begin{equation}\label{EqHomShifted}
    -u''+\left(\frac{\ell(\ell+1)}{x^2}+q(x)+\lambda\right)u=0,
\end{equation}
that its regular solution will be non-vanishing on $(0,b]$.

In some situations such value of $\lambda$ is easy to choose. For example, if $q$ is real valued and bounded from below, any $\lambda\ge -\inf_{(0,b]} q(x)$ works, see, e.g., \cite[Corollary 3.3]{CKT2013}. For real valued potentials any $\lambda$ having $\operatorname{Im}\lambda\ne 0$ works since the equality $u(x_0)=0$ would imply the existence of imaginary eigenvalue for selfadjoint problem \eqref{EqHomShifted} with Dirichlet boundary conditions formulated on $[0,x_0]$, c.f., \cite[Remark 4.1]{CKT2015}. Lemma 3.1 from \cite{Carlson1993} shows that for real valued potentials satisfying $q\in L_2(0,b)$ any $\lambda$ satisfying $\lambda \ge \int_0^b |q(x)|^2\,dx$ works.

The following proposition shows the existence of such $\lambda$ for any potential satisfying
\begin{equation}\label{EqCondOnQ}
    \begin{split}
    xq(x) &\in L_1(0,b) \qquad \text{if } \ell>-1/2,\\
    xq(x)\Bigl(1-\log \frac xb\Bigr) &\in L_1(0,b)\qquad \text{if }\ell=-1/2.
    \end{split}
\end{equation}

\begin{proposition}\label{Prop NonVanishing Sol}
Let $q$ be complex valued function satisfying condition \eqref{EqCondOnQ}. Then there exist such constant $\lambda_0> 0$ that for all $\lambda>\lambda_0$ the regular solution of equation \eqref{EqHomShifted} does not vanish on $(0,b]$.
\end{proposition}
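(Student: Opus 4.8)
The plan is to compare the regular solution of the shifted equation with the regular solution $b_\ell(i\sqrt{\lambda}\,x)$ of the unperturbed Bessel equation at the spectral point $\omega^2=-\lambda$, and to use the integral-equation representation of Kostenko--Sakhnovich--Teschl together with a Grönwall argument to show that the perturbation is small relative to $b_\ell$ uniformly on $(0,b]$ once $\lambda$ is large. First I would fix the notation: write $\omega = i\kappa$ with $\kappa=\sqrt\lambda>0$, so that $b_\ell(\omega x) = \omega x j_\ell(\omega x)$ is, up to a positive constant, equal to $(\kappa x)^{1/2} I_{\ell+1/2}(\kappa x)$, the modified Bessel function, which is strictly positive on $(0,b]$ and satisfies $b_\ell(i\kappa x)\sim c(\kappa x)^{\ell+1}$ as $x\to 0$ and grows like $e^{\kappa x}(\kappa x)^{1/2}$ for large $\kappa x$. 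I would record the two-sided bound $c_1 (\kappa x)^{\ell+1} e^{\kappa x}(1+\kappa x)^{-\ell-1/2} \le b_\ell(i\kappa x)\le c_2 (\kappa x)^{\ell+1}e^{\kappa x}(1+\kappa x)^{-\ell-1/2}$ valid for all $\kappa x>0$, which follows from the standard asymptotics and positivity of $I_{\ell+1/2}$.

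Next I would invoke the integral equation from \cite[Lemma 2.2]{KosSakhTesh2010} exactly as used in Appendix \ref{AppB}: the regular solution $u_\lambda$ of \eqref{EqHomShifted} satisfies
\[
u_\lambda(x) = b_\ell(i\kappa x) + \int_0^x G_\ell(i\kappa, x, t) q(t) u_\lambda(t)\,dt,
\]
and the Green function estimate from \cite[(A.1)]{KosSakhTesh2010} gives, for $\ell>-1/2$,
\[
|G_\ell(i\kappa, x, t)| \le C e^{\kappa(x-t)}\left(\frac{x}{1+\kappa x}\right)^{\ell+1}\left(\frac{1+\kappa t}{t}\right)^\ell .
\]
Dividing through by $b_\ell(i\kappa x)$ and setting $v(x) := u_\lambda(x)/b_\ell(i\kappa x)$, the combination of the lower bound on $b_\ell(i\kappa x)$, the upper bound on $b_\ell(i\kappa t)$ and the displayed bound on $G_\ell$ yields a kernel estimate of the form $|G_\ell(i\kappa,x,t) q(t) b_\ell(i\kappa t)/b_\ell(i\kappa x)| \le C\, t\,|q(t)|\,/(1+\kappa t)$ (the exponential factors cancel because $e^{\kappa(x-t)}e^{\kappa t}=e^{\kappa x}$, and the algebraic factors combine to something bounded by $Ct$). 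Hence
\[
|v(x)-1| \le C\int_0^x \frac{t|q(t)|}{1+\kappa t}\,|v(t)|\,dt,
\]
and Grönwall's inequality gives $|v(x)-1| \le \bigl(\exp\int_0^x \tfrac{Ct|q(t)|}{1+\kappa t}\,dt\bigr)-1$ together with $|v(x)|\le \exp\int_0^b \tfrac{Ct|q(t)|}{1+\kappa t}\,dt$. Since $tq(t)\in L_1(0,b)$ by \eqref{EqCondOnQ}, dominated convergence shows $\int_0^b \frac{t|q(t)|}{1+\kappa t}\,dt\to 0$ as $\kappa\to\infty$; therefore there is $\lambda_0>0$ such that for $\lambda>\lambda_0$ we have $|v(x)-1|<1$ for all $x\in(0,b]$, which forces $u_\lambda(x)\ne 0$ on $(0,b]$ since $b_\ell(i\kappa x)>0$ there. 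The case $\ell=-1/2$ is handled identically, replacing $q$ by $\tilde q(x)=(1-\log(x/b))q(x)$ in view of the corresponding Green function bound \cite[(A.1)]{KosSakhTesh2010} and of condition \eqref{EqCondOnQ} in that case.

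The main obstacle I anticipate is bookkeeping the $x$-, $t$- and $\kappa$-dependence so that the exponential factors in $b_\ell(i\kappa\cdot)$ and in $G_\ell(i\kappa,\cdot,\cdot)$ cancel cleanly and the residual algebraic weight is exactly integrable against $t|q(t)|\,dt$ with a bound that is uniform in $x\in(0,b]$ and tends to $0$ in $\kappa$; one must be slightly careful near $x=0$, where both $b_\ell(i\kappa x)$ and $u_\lambda(x)$ vanish like $x^{\ell+1}$, so it is the normalized quotient $v$ rather than $u_\lambda$ itself to which Grönwall is applied, and one should note that $v$ is continuous on $[0,b]$ with $v(0)=1$ thanks to the common asymptotics \eqref{bl asympt}. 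Everything else is routine and mirrors the argument already carried out in Appendix \ref{AppB}.
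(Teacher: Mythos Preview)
Your proposal is correct and uses the same toolbox as the paper---the integral equation and Green-function bounds from \cite{KosSakhTesh2010}---but organizes the argument differently. The paper quotes directly the remainder estimate $|r(\omega,x)|\le C\bigl(\tfrac{\omega x}{b+\omega x}\bigr)^{\ell+1}e^{\omega x}\int_0^x\tfrac{y\tilde q(y)}{b+\omega y}\,dy$ (which is \cite[(2.18)]{KosSakhTesh2010}, itself the output of a Gr\"onwall step) and then splits into the two regimes $\omega x\ge z_1$ and $\omega x<z_1$, comparing $|r|$ to the large-argument asymptotics of $I_{\ell+1/2}$ in the first regime and to the leading Taylor term in the second. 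Your route avoids the case split by recording a uniform two-sided bound on $I_{\ell+1/2}$ valid for all arguments, passing to the ratio $v=u_\lambda/b_\ell$, and applying Gr\"onwall followed by dominated convergence to $\int_0^b\tfrac{t|\tilde q(t)|}{1+\kappa t}\,dt$. The bookkeeping you flag as the main concern does indeed work out: combining the Green-function bound with the upper and lower bounds on $b_\ell$ one finds the transferred kernel is controlled by $C\,t/\bigl((1+\kappa x)^{1/2}(1+\kappa t)^{1/2}\bigr)\le C\,t/(1+\kappa t)$, exactly as you need. Your argument is a little more streamlined; the paper's is more hands-on and makes the two length scales explicit, which has some expository value but no mathematical advantage here.
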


\begin{proof}
Let $\lambda = \omega^2$, $\omega >0$. The regular solution of \eqref{EqHomShifted} can be written in the form
\begin{equation*}
u(x,\omega) = \sqrt{\omega x} I_{\ell+1/2}(\omega x) + r(\omega, x),
\end{equation*}
where $I_{\ell+1/2}$ is the modified Bessel function of the first kind. The function $r(\omega, x)$ satisfies, see \cite[Lemma 2.2]{KosSakhTesh2010}
\begin{equation*}
    |r(\omega, x)| \le C\left(\frac{\omega x}{b+\omega x}\right)^{\ell+1} e^{\omega x} \int_0^x \frac{y\tilde q(y)}{b+\omega y}\,dy,
\end{equation*}
where the constant $C$ does not depend on $\omega$; $\tilde q(y) = |q(y)|$ if $\ell>-1/2$ and $\tilde q(y) = |q(y)|[1-\log(y/b)]$ if $\ell=-1/2$.

Due to the asymptotics $I_\nu(z)\sim \frac{e^z}{\sqrt{2\pi z}}$, $z\to\infty$ \cite[(9.7.1)]{Abramowitz}, there exists such $z_0$ that $I_{\ell+1/2}(z)\ge \frac{e^z}{4\sqrt z}$ for all $z\ge z_0$. Let $z_1:= \max\{z_0, 8C\int_0^b y\tilde q(y)\,dy\}$ (the integral is finite due to the condition \eqref{EqCondOnQ}) and let $x_0>0$ be such that $\int_0^{x_0} y\tilde q(y)\,dy<b/8C$.

If $\omega x\ge z_1$ and $\omega\ge z_1/x_0$, then
\[
|r(\omega, x)|<\frac{Ce^{\omega x}}{b}\int_0^{x_0}y\tilde q(y)\,dy +\frac{Ce^{\omega x}}{\omega x_0}\int_{x_0}^x y\tilde q(y)\,dy\le \frac{e^{\omega x}}4\le  \sqrt{\omega x}I_{\ell+1/2}(\omega x)
\]
and hence $u(x,\omega)\ne 0$.

Let $0<\omega x<z_1$. We may estimate the function $I_{\ell+1/2}$ by the first term of its Taylor series, i.e.,
\[
\sqrt{\omega x}I_{\ell+1/2}(\omega x) \ge \frac{(\omega x)^{\ell+1}}{2^{\ell+1/2}\Gamma(\ell+3/2)},\qquad 0<\omega x<z_1.
\]
Since $y\tilde q(y)\in L_1(0,b)$, there exists $x_1>0$ such that
\[
\int_0^{x_1} y\tilde q(y)\, dy \le \frac{b^{\ell+2}}{2^{\ell+1/2}\Gamma(\ell+3/2)Ce^{z_1}}.
\]
Take $\omega\ge z_1/x_1$. Then it follows from the inequality $\omega x<z_1$ that $x<x_1$. Hence
\[
|r(\omega, x)|<\frac{Ce^{z_1}(\omega x)^{\ell+1}}{b^{\ell+2}}\int_0^{x_1} y\tilde q(y)\, dy\le \sqrt{\omega x}I_{\ell+1/2}(\omega x),
\]
and so $u(x,\omega)\ne 0$.

Hence for all values of $\omega$ greater than $\max\bigl\{ \frac{z_1}{x_0}, \frac{z_1}{x_1}\bigr\}$ the function $u(x,\omega)$ does not vanish on $(0,b]$.
\end{proof}

\end{document}